\def\l@subsection{\@tocline{2}{0pt}{2.5pc}{2.5pc}{}}%
\DeclareFontFamily{OT1}{pzc}{}
\DeclareFontShape{OT1}{pzc}{m}{it}{<-> s * [1.10] pzcmi7t}{}
\DeclareMathAlphabet{\mathpzc}{OT1}{pzc}{m}{it}
\theoremstyle{definition}
\newtheorem{thm}{Theorem}[section]
\newtheorem{dfn}[thm]{Definition}
\newtheorem{lem}[thm]{Lemma}
\newtheorem{prop}[thm]{Proposition}
\newtheorem{cor}[thm]{Corollary}
\newtheorem{rem}[thm]{Remark}
\newtheorem{exa}[thm]{Example}
\newtheorem{ass}[thm]{Assumption}
\newtheorem{inst}[thm]{Instruction}
\newtheorem*{defn*}{Definition}
\newtheorem*{thm*}{Theorem}
\newtheorem*{cor*}{Corollary}
\newtheorem*{prp*}{Proposition}
\newtheorem{problem}{Problem}
\newtheorem{thmA}{Theorem}
\newcommand{\N}{\mathbb{N}}
\newcommand{\Z}{\mathbb{Z}}
\newcommand{\Q}{\mathbb{Q}}
\newcommand{\C}{\mathbb{C}}
\newcommand{\D}{\mathbb{D}}
\newcommand{\e}{\varepsilon}
\newcommand{\ga}{\gamma}
\newcommand{\De}{\Delta}
\newcommand{\la}{\lambda}
\newcommand{\Ga}{\Gamma}
\newcommand{\X}{\mathfrak{X}_{^{C\>\!\!(\>\!\!K\>\!\!)}}}
\newcommand{\trn}[1]{{\left\vert\kern-0.25ex\left\vert\kern-0.25ex\left\vert #1 
    \right\vert\kern-0.25ex\right\vert\kern-0.25ex\right\vert}}
\newcommand{\trnsmall}[1]{{\vert\kern-0.25ex\vert\kern-0.25ex\vert #1 
    \vert\kern-0.25ex\vert\kern-0.25ex\vert}}
\DeclareMathOperator{\supp}{supp}
\DeclareMathOperator{\range}{range}
\DeclareMathOperator{\rank}{rank}
\DeclareMathOperator{\weight}{weight}
\DeclareMathOperator{\age}{age}
\long\def\symbolfootnote[#1]#2{\begingroup%
\def\thefootnote{\fnsymbol{footnote}}\footnote[#1]{#2}\endgroup}
\begin{document}

\title[Separable $C(K)$ Spaces as Calkin Algebras]{Separable Spaces of Continuous Functions as Calkin Algebras}

\author[P. Motakis]{Pavlos Motakis}

\address{P. Motakis, Department of Mathematics and Statistics, York University, 4700 Keele Street, Toronto, Ontario, M3J 1P3, Canada}

\email{pmotakis@yorku.ca}

\thanks{The author was supported by NSERC Grant RGPIN-2021-03639.}


\keywords{}

\subjclass[2020]{46B07, 46B25, 46B28, 46J10.}

\begin{abstract}
It is proved that for every compact metric space $K$ there exists a Banach space $X$ whose Calkin algebra $\mathcal{L}(X)/\mathcal{K}(X)$ is homomorphically isometric to $C(K)$. This is achieved by appropriately modifying the Bourgain-Delbaen $\mathscr{L}_\infty$-space of Argyros and Haydon in such a manner that sufficiently many diagonal operators on this space are bounded.
\end{abstract}

\maketitle


\tableofcontents

\section{Introduction}
For a Banach space $X$ denote by $\mathcal{L}(X)$ the algebra of bounded linear operators on $X$ and by $\mathcal{K}(X)$ the compact operator ideal in $\mathcal{L}(X)$. A Banach algebra $\mathcal{A}$ is said to be a Calkin algebra if there exists an underlying Banach space $X$ so that the Calkin algebra $\mathpzc{Cal}(X) = \mathcal{L}(X)/\mathcal{K}(X)$ of $X$ is isomorphic as a Banach algebra (not necessarily isometrically) to $\mathcal{A}$. The question of what unital Banach algebras are Calkin algebras is very rudimentary. Calkin introduced this object for $X = \ell_2$ in 1941 (\cite{calkin:1941}). Since then $\mathpzc{Cal}(\ell_2)$ has uninterruptedly been in the spotlight, partly owing to the fact that it has highlighted connections between operator algebras and other fields of mathematics, e.g., $K$-theory (Brown, Douglas, and Fillmore, \cite{brown:dougla:fillmore:1977}), Set Theory (Phillips and Weaver, \cite{phillips:weaver:2007}), and Descriptive Set Theory (Farah, \cite{farah:2011}). The systematic study of $\mathpzc{Cal}(X)$ for general Banach spaces $X$ dates back to Yood's 1954 paper \cite{yood:1954}. The term Calkin algebra in this precise context can be traced at least as far back as 1974 to Caradus, Pfaffenberger, and Yood's  book \cite{caradus:pfaffenberger:yood:1974} who proposed the problem of specifying criteria on $X$ which would determine whether $\mathpzc{Cal}(X)$ is semi-simple. The advent of powerful modern construction techniques in Banach spaces, such as the Gowers-Maurey (\cite{gowers:maurey:1993}) and Argyros-Haydon (\cite{argyros:haydon:2011}) methods, made it finally possible to represent certain relatively simple Banach algebras as Calkin algebras. Notable examples include the complex field (\cite{argyros:haydon:2011}), the semigroup algebra of $\N_0$ (Tarbard, \cite{tarbard:2013}), and $C(K)$ for a countable compact space $K$ (Puglisi, Zisimopoulou, and the author, \cite{motakis:puglisi:zisimopoulou:2016}). Although this is an impressive fact, more complicated Banach algebras, such as $C[0,1]$, have been entirely out of reach with past methods. The purpose of the current paper is to develop a new technique that bridges this gap. To overcome existing limitations, a radically new way of imposing a prescribed structure of operators on a Banach space $X$ is presented. As a result, for every compact metric space $K$, $C(K)$ admits a representation as a Calkin algebra.

Within the context of modern Banach space theory, there is a strong relation between the explicit description of quotient algebras of $\mathcal{L}(X)$ and the tight control of the structure of bounded linear operators on a Banach space $X$. The most relevant examples to the present paper are the Gowers-Maurey space $X_\mathrm{GM}$ from 1993 (\cite{gowers:maurey:1993}) and the Argyros-Haydon space $\mathfrak{X}_\mathrm{AH}$ from 2011 (\cite{argyros:haydon:2011}). Each of these groundbreaking constructions solved numerous longstanding open problems and revolutionized the view of general Banach spaces. Both spaces have the tightest possible space of operators modulo a small ideal, which is different in each case. An operator between Banach spaces is called strictly singular if it does not preserve an isomorphic copy of any infinite dimensional subspace of its domain. Denote by $\mathcal{SS}(X)$ the ideal of strictly singular operators on $X$. Then, every $T\in\mathcal{L}(X_\mathrm{GM})$ is a scalar operator plus a strictly singular operator and every $T\in\mathcal{L}(\mathfrak{X}_\mathrm{AH})$ is a scalar operator plus a compact one. In other words, $\mathcal{L}(X_\mathrm{GM})/\mathcal{SS}(X_\mathrm{GM})$ and $\mathcal{L}(\mathfrak{X}_\mathrm{AH})/\mathcal{K}(\mathfrak{X}_\mathrm{AH})$ are both one-dimensional. The space $\mathfrak{X}_\mathrm{AH}$ was constructed by combining the Bourgain-Delbaen method for defining $\mathscr{L}_\infty$-spaces from \cite{bourgain:delbaen:1980} with the Gowers-Maurey space $X_\mathrm{GM}$. It is now understood that frequently phenomena that can be witnessed ``modulo strictly singular operators'' in Gowers-Maurey-type spaces can also be witnessed ``modulo compact operators'' in Argyros-Haydon-type spaces (see, e.g., \cite{tarbard:2013}, \cite{manoussakis:pelczar:swietek:2017},\cite{argyros:motakis:2019}).

Gowers and Maurey sought  in \cite{gowers:maurey:1997} the introduction of a prescribed structure on the space of operators $\mathcal{L}(X)$ of a Banach space $X$. Building upon their aforementioned paper \cite{gowers:maurey:1993} they developed a general method for defining Banach spaces whose algebra of operators has a quotient algebra that is generated by what they called a proper family of spread operators. Among other examples, they defined a space $X_S$ with a basis on which both the left shift $L$ and the right shift $R$ are bounded. In fact, any operator $T\in\mathcal{L}(X_S)$ can be written as $T = \la I + \sum_{n=1}^\infty a_n R^n + \sum_{n=1}^\infty b_nL^n + S$, where $S\in\mathcal{SS}(X_S)$ and the scalar coefficients in both series are absolutely summable. As a consequence, the quotient algebra $\mathcal{L}(X_S)/\mathcal{SS}(X_S)$ coincides with the convolution algebra $\ell_1(\Z)$ (also known as the Wiener algebra).

Tarbard adapted some of the techniques from \cite{gowers:maurey:1997} and defined an Argyros-Haydon-type $\mathscr{L}_\infty$-space $\mathfrak{X}_\mathrm{T}$ in \cite{tarbard:2013} on which a kind of right shift operator $R$ is bounded. On this space every bounded linear poperator $T$  can be writen as $T = \la I + \sum_{n=1}^\infty a_n R^n + A$ with $A\in\mathcal{K}(\mathfrak{X}_\mathrm{T})$ and the scalar coefficients in the series being absolutely summable. Thusly, the resulting Calkin algebra of $\mathfrak{X}_\mathrm{T}$ is the convolution algebra $\ell_1(\N_0)$. With similar techniques, he had earlier constructed in \cite{tarbard:2012} for each $n\in\N$ a space whose Calkin algebra is the algebra of $n\times n$ upper triangular Toeplitz matrices.

There is a natural three-step process for defining a space with a prescribed Calkin algebra $\mathcal{A}$. The first step is to identify an appropriate class $\mathcal{C}$ of operators, acting on a classical Banach space $X_0$ with a basis, that generates $\mathcal{A}$ in $\mathcal{L}(X_0)$. For example, the left and right shift acting on $X_0=\ell_1(\Z)$ generate the Wiener algebra. The next step is to adapt the methods from \cite{gowers:maurey:1997} to define a Gowers-Maurey-type space $X$ on which this predetermined class of operators can be used to approximate all $T$ in $\mathcal{L}(X)$, modulo the strictly singular operators. The final step is to involve the Bourgain-Delbaen construction method to produce an Argyros-Haydon-type space whose Calkin algebra is explicitly $\mathcal{A}$. How well this process works depends on the class $\mathcal{C}$ and the space $X_0$. Classes of spread operators on $\ell_1$ have fitted within this framework nicely. Without elaborating too much on the reason of this success, the second step comes down to the fact that $X_\mathrm{GM}$, being based on Schlumprecht space (\cite{schlumprecht:1991}), has a lot of local $\ell_1$-structure and that shift operators don't interfere too much with conditional structure. For the third step, it is important that bounded shift operators can already be found (\cite[Theorem 3.7, page 742]{tarbard:2012}) in a certain ``simple'' mixed-Tsirelson Bourgain-Delbaen $\mathscr{L}_\infty$-space of Argyros and Haydon (\cite[Section 4]{argyros:haydon:2011}). The present paper follows this paradigm but, due to inherent limitations of all previous Bourgain-Delbaen constructions, it cannot be carried out without drastic conceptual modification.

There are also examples of explicit Calkin algebras that do not adhere to this specific three-step process. For example, in \cite{kania:laustsen:2017} it was observed by Kania and Laustsen that by combining finitely many carefully chosen Argyros-Haydon spaces one can obtain any finite dimensional semi-simple complex algebra as a Calkin algebra. An earlier, but more involved, instance of this type of construction is due to Puglisi, Zisimopoulou, and the author from \cite{motakis:puglisi:zisimopoulou:2016}. The main idea is that by taking infinitely many Argyros-Haydon spaces and combining them with an Argyros-Haydon sum (introduced by Zisimopoulou in \cite{zisimopoulou:2014}) the resulting space has Calkin algebra $C(\omega)$. Iterating this process yields, for every countable compactum $K$, a $C(K)$ Calkin algebra. A similar method was used by Puglisi, Tolias, and the author  in \cite{motakis:puglisi:tolias:2020} to construct a variety of Calkin algebras, e.g., quasi-reflexive and hereditarily indecomposable ones. All underlying Banach spaces mentioned in this paragraph may be viewed as composite Argyros-Haydon spaces.

Although the statement of the main result of this paper is very similar to the aforementioned one from \cite{motakis:puglisi:zisimopoulou:2016} the proof is very different. As it was pointed out in that paper, the iterative method is insufficient for uncountable $K$. Here, the concept of introducing a prescribed structure of operators in a Bourgain-Delbaen space is built from the ground up to achieve the desired result. With regards to the first step of the three-step process towards constructing a $C(K)$ Calkin algebra, normal operators on $\ell_2$ are the natural candidates. Indeed, by the spectral theorem the $C^*$-algebra generated by a normal operator $T$ is $C(\sigma(T))$. This is in fact fairly straightforward whenever $T$ is diagonal with respect to some orthonormal basis and thus its norm is the supremum of its diagonal entries. Going through this exercise clarifies that on any space with an unconditional basis any family of diagonal operators generates a $C(K)$ Banach algebra. This supports the conclusion that the class $\mathcal{C}$ in the first step needs to be one consisting of sufficiently many diagonal operators that capture the information of the compact space $K$.

\begin{thmA}
\label{main theorem}
Let $K$ be a compact metric space. There exists an Argyros-Haydon-type Bourgain-Delbaen $\mathscr{L}_\infty$-space $\X$ with a conditional Schauder basis $(d_\ga)_{\ga\in\Ga}$ that satisfies the following properties.
\begin{enumerate}[leftmargin=19pt,label=(\alph*)]

\item\label{main diag+comp} Every bounded linear operator $T:\X\to\X$ can be written in the form $T = D+A$ where $D$ is diagonal bounded linear operator and $A$ is a compact linear operator.

\item\label{main identification} There exists a Banach algebra isomorphism $\Psi:C(K)\to\mathpzc{Cal}(\X)$.

\item\label{main isometry} The space $\X$ admits an equivalent norm with respect to which $\Psi$ is an isometry. In particular, this space's Calkin algebra is homomorphically isometric to $C(K)$.

\end{enumerate}
\end{thmA}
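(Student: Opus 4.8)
The plan is to realize $C(K)$ as the ``diagonal part'' of a Bourgain--Delbaen $\mathscr{L}_\infty$-space built on the Argyros--Haydon template, with the Bourgain--Delbaen index set carrying enough combinatorial data to remember the topology of $K$. First I would fix a convenient presentation of $K$: a homeomorphic embedding $K\hookrightarrow[0,1]^{\N}$ (so that $C(K)$ is generated as a unital Banach algebra by a countable point-separating family of $[0,1]$-valued functions), together with, for each $n$, a finite $2^{-n}$-net of $K$ and the induced partition of $K$ into \emph{cells} of level $n$, refining as $n$ increases. The space $\X$ is then the Bourgain--Delbaen space $X_{(\Ga,\tau)}$ associated with a set $\Ga=\bigcup_n\De_n$ constructed \emph{exactly} as in the mixed-Tsirelson Argyros--Haydon scheme --- the same weight sequence $(m_j^{-1})$, the same bound $(n_j)$ on the age, the same even/odd self-similar coding used to annihilate operators --- except that every $\ga\in\De_n$ additionally carries a \emph{label} $\sigma(\ga)\in K$, and the extension functionals $c^*_\ga=e^*_\eta+m_j^{-1}b^*$ are only permitted to combine constituents whose labels lie in a common cell of a level coupled to the weight index $j$. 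Passing to the limit one arranges that the label map extends continuously to the branches of the resulting ``label tree'', that $\sigma(\Ga)$ is dense in $K$, and that every point of $K$ is approximated by values $\sigma(\ga)$ with $|\ga|$ arbitrarily large. The basis $(d_\ga)_{\ga\in\Ga}$ is the usual (conditional) Bourgain--Delbaen basis, with $\|d_\ga\|\asymp1$ and $(d_\ga)$ weakly null.

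For $f\in C(K)$ let $D_f$ be the diagonal operator $D_f d_\ga=f(\sigma(\ga))\,d_\ga$. The first technical point is that $D_f$ is \emph{bounded}, with $\|D_f\|\le C\|f\|_\infty$ for an absolute constant $C$: running the standard evaluation analysis of $e^*_\ga$ one sees that $D_f^*$ multiplies the ``$d^*$-part'' of each constituent by the exact scalar $f(\sigma(\cdot))$, so the discrepancy $\|D_f^*e^*_\ga-f(\sigma(\ga))e^*_\ga\|_{\X^*}$ is built from terms of the form $m_j^{-1}$ times the oscillation of $f$ across a level-$j$ cell, and --- precisely because the cell levels were coupled to $j$ and the weights $m_j^{-1}$ decay faster than the conditional structure can compensate --- these sum to at most $\big(\sum_j m_j^{-1}\big)\|f\|_\infty$. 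Since $f\mapsto D_f$ is additive and multiplicative, $\Psi(f):=[D_f]$ is a bounded unital algebra homomorphism $C(K)\to\mathpzc{Cal}(\X)$. It is bounded below, hence injective: choosing $\ga_n$ with $|\ga_n|\to\infty$ and $\sigma(\ga_n)\to k$ where $|f(k)|=\|f\|_\infty$, weak nullity of $(d_{\ga_n})$ gives $\|(D_f-A)d_{\ga_n}\|\ge(\|f\|_\infty-o(1))\|d_{\ga_n}\|$ for every compact $A$, so $\|\Psi(f)\|\ge c\|f\|_\infty$.

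The core of the argument is property \ref{main diag+comp} together with the surjectivity of $\Psi$, and this is where the Argyros--Haydon operator analysis must be reworked in the presence of the labels. For $T\in\mathcal{L}(\X)$ one shows, via rapidly increasing sequences and a basic inequality adapted to the new coding, that $T$ differs from a (bounded) diagonal operator by a strictly singular operator; since --- as on Argyros--Haydon spaces --- strictly singular operators on $\X$ are compact, this yields $T=D_T+A$ with $D_T$ diagonal and $A$ compact, proving \ref{main diag+comp}. It remains to see that a bounded diagonal operator $D_\mu$ ($D_\mu d_\ga=\mu_\ga d_\ga$) is always a compact perturbation of some $D_f$, $f\in C(K)$: if $\mu$ took two distinct cluster values along interleaved branches converging to the same point of $K$, an alternating-signs rapidly increasing sequence would have its norm inflated by $D_\mu$, contradicting the conditional estimates; likewise the induced cluster function on $K$ cannot be discontinuous; hence $\mu_\ga-f(\sigma(\ga))\to0$ for a unique $f\in C(K)$ and $[D_\mu]=\Psi(f)$. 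Combined with \ref{main diag+comp} this shows $\Psi$ is surjective, and a bounded, bounded-below, surjective algebra homomorphism is an isomorphism, which is \ref{main identification}.

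Finally, for \ref{main isometry} equip $\X$ with the equivalent norm $\trn{x}=\sup\{\|D_f x\|:f\in C(K),\ \|f\|_\infty\le1\}$ (equivalent because $\|x\|\le\trn{x}$, taking $f\equiv1$, while $\trn{x}\le C\|x\|$ by the boundedness estimate). From $D_fD_g=D_{fg}$ one gets $\trn{D_g x}\le\|g\|_\infty\trn{x}$, while $\trn{d_\ga}=\|d_\ga\|$ and $\trn{D_g d_\ga}=|g(\sigma(\ga))|\,\|d_\ga\|$ give $\trn{D_g}=\|g\|_\infty$; subtracting a compact operator and using weak nullity as before then yields $\trn{[D_g]}=\|g\|_\infty$, so $\Psi$ becomes an isometric algebra isomorphism onto the Calkin algebra of $(\X,\trn{\cdot})$. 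The principal obstacle is not any single one of these estimates but the construction itself: the label data must be woven into the rigid Argyros--Haydon combinatorics so that $D_f$ stays bounded for \emph{every} continuous $f$ --- forcing the cell-refinement scales to be precisely synchronised with $(m_j)$ and $(n_j)$ --- while the even coding continues to annihilate all off-diagonal operators and no ``wild'' bounded diagonal operator is created. Reconciling these competing demands is exactly the conceptually new ingredient promised in the introduction.
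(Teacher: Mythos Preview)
Your outline captures the broad architecture correctly, but there is a genuine gap at a point you treat as routine: the claim that $D_f$ is bounded with $\|D_f\|\le C\|f\|_\infty$ for \emph{every} $f\in C(K)$. This is false in the construction (and must be, since if it held then $f\mapsto D_f$ would be a bounded embedding of $C(K)$ into $\mathcal{L}(\X)$, forcing the basis to be unconditional in a way incompatible with the HI-type structure; the paper makes this explicit). Your evaluation-analysis sketch hides the issue: the discrepancy $\|D_f^*e_\gamma^*-f(\sigma(\gamma))e_\gamma^*\|$ is controlled by $m_j^{-1}$ times the oscillation of $f$ over a cell of diameter tied to $j$, but bounding that oscillation by something proportional to $\|f\|_\infty$ requires a modulus-of-continuity hypothesis on $f$, not mere continuity. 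The paper only proves boundedness of $\hat\phi$ for \emph{Lipschitz} $\phi$, with a bound depending on the Lipschitz constant, and then obtains $\Psi$ on all of $C(K)$ by passing to the Calkin algebra and extending from the dense Lipschitz subalgebra.

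This gap cascades into your renorming: the norm $\trn{x}=\sup\{\|D_fx\|:\|f\|_\infty\le1\}$ is not equivalent to $\|\cdot\|$ (indeed not finite) once $K$ is infinite. The paper's renorming is substantially more delicate: it uses an increasing sequence of compact multiplicative semigroups $\mathcal{B}_n$ of Lipschitz functions in the unit ball, takes the $N(n)$ from the Lipschitz-boundedness estimate, and sets $\trn{x}=\sup_n\sup_{\phi\in\mathcal{B}_n}\|P_{[N(n),\infty)}\hat\phi x\|$. The semigroup property is what makes $\Psi$ contractive in this norm, and the tail projections are what keep the norm equivalent. A secondary point: your construction uses extension functionals of Argyros--Haydon form $c_\gamma^*=e_\eta^*+m_j^{-1}b^*$ with $b^*$ a convex combination, but the paper deliberately drops convex combinations and instead imposes a ``very fast growing'' weight constraint on the constituents (saturation under constraints); this is not cosmetic --- it is what makes the diagonal operators bounded at all, and it is the conceptually new ingredient you allude to in your last sentence without actually deploying.
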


Statement \ref{main isometry} is separate to indicate the fact that the usual Bourgain-Delbaen norm does not have the isometric property.

As it has been mentioned repeatedly, the construction of $\X$ cannot be an immediate application of the aforementioned three-step process. Although the first and second step would go through with reasonably standard modifications of classical methods, the third one meets a dead end; on all types of previously defined Bourgain-Delbaen spaces non-trivial diagonal operators are never bounded. To overcome this, it is necessary to model the space $\X$ on a different Gowers-Maurey-type space that relies heavily on a technique called saturation under constraints initiated by Odell and Schlumprecht (\cite{odell:schlumprecht:1995} and \cite{odell:schlumprecht:2000}) and extensively developed by Argyros, the author, and others (\cite{argyros:motakis:2014}, \cite{argyros:motakis:2019}, etc.). In the end, boundedness of non-scalar operators is achieved in a fundamentally different way compared to previous non-classical spaces (e.g., \cite{gowers:1994}, \cite{gowers:maurey:1997}). Significant effort has been made to explain this new idea. To this end, Section \ref{mT section} is entirely devoted to an exposition of concepts in a less intimidating mixed-Tsirelson stage. None of the results from that section are used directly in the proof of Theorem \ref{main theorem}. Instead, everything needs to be reframed in the setting of the Argyros-Haydon construction. However, the main point is that the final result is based on an accessible novel principle that is then thrusted by powerful existing technologies to fulfill its potential.

The paper is organized into nine sections. Arguably, the most important one is Section \ref{mT section} in which the underlying principles behind Theorem \ref{main theorem} are clarified. This is done by defining and briefly studying a mixed-Tsirelson space $X_{^{C\>\!\!(\>\!\!K\>\!\!)}}$. Section \ref{BD section} deals with the development of the Argyros-Haydon-type Bourgain-Delbaen incarnation $\X$ of the simpler space $X_{^{C\>\!\!(\>\!\!K\>\!\!)}}$. In Section \ref{diagonal boundedness section} it is shown that sufficiently many diagonal operators on $\X$ are bounded to define a homomorphic embedding $\Psi:C(K)\to\mathpzc{Cal}(\X)$. In Section \ref{impact section} the main Theorem is proved by showing that $\Psi$ is onto. This argument is made modulo two black-box theorems that rely on the conditional structure of $\X$, which is studied in Sections \ref{common concepts section} and \ref{operators section}. In these two sections, techniques from the theory of hereditarily indecomposable (HI) spaces are implemented and a satisfactory first reading of the paper is possible by omitting them. Some fundamental and non-trivial parts of the Argyros-Haydon construction, such as estimations on rapidly increasing sequences, carry over verbatim to the current paper. To avoid inflating the contents of the paper, they have not been repeated. Section \ref{extra stuff} outlines some additional structural properties of $\X$. The final section is devoted to a detailed discussion of possible future directions in this line of research.

All vector spaces are over the complex field. This is to mirror standard practice in the study of operator algebras and it is not essential; all definitions and proofs work just as well over the real field. Denote by $\C_\Q$ the field of complex numbers with rational real and imaginary parts and put $\D_\Q = \{\la\in\C_\Q:|\la|\leq 1\}$. Let $c_{00}$ denote the vector space of eventually zero complex sequences. For $f = (b_n)_n$, $x = (a_n)_n$ in $c_{00}$ define $f(x) = \sum_nb_na_n$. The unit vector basis of $c_{00}$ is denoted both by $(e_n^*)_n$ and by $(e_n)_n$ depending on whether it is seen as a sequence of functionals or one of vectors. For $x\in c_{00}$ denote $\supp(x) = \{n\in\N:e_n^*(x)\neq 0\}$ and for $E\subset \N$ let $Ex = \sum_{n\in E}e_n^*(x)e_n$. For $x$, $y$ in $c_{00}$ write $x<y$ to mean that their supports are successive subsets of $\N$.  A sequence of successive vectors in $c_{00}$ is called a {\em block sequence}.

Let, for the entirety of this paper, $(K,\rho)$ be a fixed compact metric space.

\section{Heuristic explanation on a mixed-Tsirelson space $X_{^{C\>\!\!(\>\!\!K\>\!\!)}}$}
\label{mT section}
Schlumprecht space from \cite{schlumprecht:1991} is one of the important evolutionary steps in the history of non-classical Banach spaces. It was an integral ingredient in the solution to the unconditional sequence problem by Gowers and Maurey who constructed in \cite{gowers:maurey:1993}  the first hereditarily indecomposable (HI) space $X_{GM}$. On $X_{GM}$, every bounded linear operator is of the form $T = \la I + S$, with $S$ strictly singular. Mixed-Tsirelson spaces can be viewed as a discretization of Schlumprecht space. The first such space was defined by Argyros and Deliyanni in \cite{argyros:deliyanni:1997} and this section introduces a new variant.

With a novel approach, a Gowers-Maurey-like reflexive mixed-Tsirelson space $X_{^{C\>\!\!(\>\!\!K\>\!\!)}}$ is defined on which a large class of diagonal operators are bounded. More precisely, this space has a conditional Schauder basis and the following properties.
\begin{enumerate}[leftmargin=19pt,label=(\alph*)]

\item\label{diag+ss} Every bounded linear operator $T$ on $X_{^{C\>\!\!(\>\!\!K\>\!\!)}}$ can be written as $T = D+S$ with $D$ diagonal and $S$ strictly singular.

\item\label{C(K) quotient algebra} The quotient algebra $\mathcal{L}(X_{^{C\>\!\!(\>\!\!K\>\!\!)}})/\mathcal{SS}(X_{^{C\>\!\!(\>\!\!K\>\!\!)}})$ is isomorphic, as a Banach algebra, to $C(K)$.

\end{enumerate}
It is important to point out that not all bounded scalar sequences define bounded diagonal operators on $X_{^{C\>\!\!(\>\!\!K\>\!\!)}}$, otherwise its basis would be unconditional. Modulo the strictly singular operators, the bounded diagonal operators describe the space $C(K)$.

There already exists an approach that has been used to introduce a prescribed structure of operators in Gowers-Maurey-type spaces (see, e.g.,  \cite{gowers:1994} and \cite{gowers:maurey:1997}). In this method a space is defined by constructing a norming set, i.e., a subset $W$ of the ball $B_{X^*}$ of the dual that defines the norm of the resulting space $X$. To enforce the boundedness of a desired collection  of operators, the standard norming set $W_\mathrm{GM}$ of $X_\mathrm{GM}$ is augmented to a larger set $W$ by making it stable under the action of $T^*$ (i.e., $T^*(W)\subset W$), for $T$ in an appropriate class $\mathcal{C}$. The present approach is entirely different. It is based on a saturation under constraints from \cite{argyros:motakis:2020} and instead of enriching the set $W_\mathrm{GM}$, very strict conditions are imposed on what members are allowed to be used from it. The operators that will eventually be bounded never appear explicitly in the construction and the imposed constraints relate to weights of functions and the metric of the compact space under consideration. Rather unexpectedly, this impoverishment of the norming set results in the enrichment of the space of operators. This somewhat bizarre phenomenon is being discussed here with the sole purpose of introducing the ideas behind the Bourgain-Delbaen spaces that appear in the main result of the paper. The properties of $X_{^{C\>\!\!(\>\!\!K\>\!\!)}}$ are not justified in full detail and the only proof presented here is that certain diagonal operators are bounded.

At this point it needs to be pointed out that there are more conventional paths that yield a space satisfying properties \ref{diag+ss} and \ref{C(K) quotient algebra}. These paths however do not translate well to the Bourgain-Delbaen setting. Also, in reality the current construction does not use the set $W_\mathrm{GM}$ as a starting point but a similar standard set $W_\mathrm{mT}^\mathrm{HI}$ introduced below.

\subsection{Mixed-Tsirelson spaces}
Fix, for the remainder of this paper, a double sequence of positive even integers $(m_j,n_j)_j$ that satisfies the conditions
\begin{enumerate}[label=(\alph*)]

\item $m_1\geq 8$,

\item $m_{j+1}\geq m_j^2$,

\item $n_1\geq m_1^2$, and

\item $n_{j+1}\geq (16n_j)^{\log_2m_{j+1}}$.

\end{enumerate}

\begin{dfn}
\label{weighted def}
Let $W$ be a subset of the unit ball of $(c_{00},\|\cdot\|_\infty)$.
\begin{enumerate}[leftmargin=21pt,label=(\roman*)]

\item The set $W$ is called a {\em norming set} if it contains the unit vector basis and for every $f\in W$, $\lambda\in\D_\Q$, and interval $E$ of $\N$, $\la Ef\in W$.

\item For $j\in\N$, $W$ is said to be {\em closed under the $(m_j^{-1},\mathcal{A}_{n_j})$-operation}, if for every $1\leq d\leq n_j$ and $f_1<\cdots<f_d\in W$,
\[f = \frac{1}{m_j}\sum_{i=1}^df_i\in W.\]
Such an $f$ is said to be the outcome of an $(m_j^{-1},\mathcal{A}_{n_j})$-operation and it is called a {\em weighted functional} with $\weight(f) = m_j^{-1}$. Note that this notion is dependent on $W$ and it is not necessarily uniquely determined. As a convention it will be assumed that for all $n\in\N$ and $\lambda\in\D_\Q$, $\weight(\lambda e^*_n) = 0$. This is important.

\item\label{restricted operation} For $j\in\N$ and a family $\mathcal{F}$ of finite sequences of successive members of $W$, the set $W$ is said to be {\em closed under the $(m_j^{-1},\mathcal{A}_{n_j},\mathcal{F})$-operation} if for every $1\leq d\leq n_j$ and $(f_1,\ldots,f_d)$ in $\mathcal{F}$,
\[f = \frac{1}{m_j}\sum_{i=1}^df_i\in W.\]
Such an $f$ is said to be the outcome of an $(m_j^{-1},\mathcal{A}_{n_j},\mathcal{F})$-operation.

\end{enumerate}

\end{dfn}

\begin{dfn}
\label{basic mT}
The fundamental mixed-Tsirelson norming set $W_{\mathrm{mT}}$ is the smallest subset of $c_{00}$ that satisfies the following properties.
\begin{enumerate}[leftmargin=21pt,label=(\roman*)]

\item The set $W_{\mathrm{mT}}$ is a norming set.

\item For every positive integer $j\in\N$ the set $W_{\mathrm{mT}}$ is closed under the $(m_{j}^{-1},\mathcal{A}_{n_{j}})$-operation.

\end{enumerate}
\end{dfn}
The space $X_\mathrm{mT}$ induced by this norming set is the completion of $c_{00}$ under the norm $\|x\| = \sup\{|f(x)|:f\in W_{\mathrm{mT}}\}$. The experienced reader may have spotted that $W_{\mathrm{mT}}$ is not closed under rational convex combinations. This is intentional and it is a necessary prerequisite for the application of saturation under constraints with increasing weights in the style of \cite{argyros:motakis:2020}. This ingredient will be included in the recipe of the space $X_{^{C\>\!\!(\>\!\!K\>\!\!)}}$. The omission of convex combinations is a constant feature in this paper.

When it comes to HI-type constructions, the family of special sequences resulting from a Maurey-Rosenthal {\em coding function} (\cite{maurey:rosenthal:1977}) is a ubiquitous tool. Let $\mathcal{Q}$ denote the collection of all finite sequences of successive non-zero members of the unit ball of $(c_{00},\|\cdot\|_\infty)$ that have coefficients in $\D_\Q$. Fix an injection $\sigma:\mathcal{Q}\to \N$ so that for every $(f_1,\ldots,f_d)\in\mathcal{Q}$, $\sigma(f_1,\ldots,f_d) > \|f_d\|_\infty^{-1}2^{\max\supp(f_d)}$.

\begin{dfn}
Let $W$ be a norming set. A sequence $(f_1,\ldots,f_d)\in\mathcal{Q}$ of weighted functionals is $W$ is called a {\em special sequence} if the following hold:
\begin{enumerate}[label=(\roman*),leftmargin=19pt]

\item for some $j_1\in\N$, $\weight(f_1) = m_{4j_1-2}^{-1}$, and

\item for $1<i\leq d$ $\weight(f_i) = m_{4\sigma(f_1,\ldots,f_{i-1})}^{-1}$.

\end{enumerate}
Denote the collection of all special sequences in $W$ by $\mathcal{F}_\mathrm{sp}(W)$.
\end{dfn}
The important feature of a special sequence is that the weight of the last member uniquely determines the sequence of its predecessors.

\begin{dfn}
The fundamental mixed-Tsirelson HI norming set $W_{\mathrm{mT}}^\mathrm{hi}$ is the smallest subset of $c_{00}$ that satisfies the following properties.
\begin{enumerate}[leftmargin=21pt,label=(\roman*)]

\item The set $W_{\mathrm{mT}}^\mathrm{hi}$ is a norming set.

\item\label{even weights} For every even positive integer $2j\in\N$ the set $W_{\mathrm{mT}}^\mathrm{hi}$ is closed under the $(m_{2j}^{-1},\mathcal{A}_{n_{2j}})$-operation.

\item\label{odd weights} For every odd positive integer $2j-1\in\N$ the set $W_{\mathrm{mT}}^\mathrm{hi}$ is closed under the $(m_{2j-1}^{-1},\mathcal{A}_{n_{2j-1}},\mathcal{F}_\mathrm{sp}(W_{\mathrm{mT}}^\mathrm{hi}))$-operation.

\end{enumerate}
\end{dfn}
The space $X_\mathrm{mT}^\mathrm{hi}$ induced by this norming set is an HI space on which every bounded linear operator $T$ can be written as $T = \la I + S$ with $S$ strictly singular.

\subsection{Definition of $X_{^{C\>\!\!(\>\!\!K\>\!\!)}}$} In \cite{gowers:1994} and \cite{gowers:maurey:1997} the space of operators is augmented by loosening the above condition \ref{odd weights} and thus creating a larger norming set. Instead, in the current paper  conditions \ref{even weights} and \ref{odd weights} are tightened even further to result in a smaller norming set. As it turns out, this enriches the space of operators. The following constraint was first considered in \cite{argyros:motakis:2020}. It has its roots in papers \cite{odell:schlumprecht:1995} and \cite{odell:schlumprecht:2000} of Odell and Schlumprecht and it was further developed in a series of several papers by Argyros, the author, and others (\cite{argyros:motakis:2014}, \cite{argyros:motakis:2019}, etc.).

\begin{dfn}
Let $W$ be a norming set. A sequence of weighted functionals $f_1<f_2<\cdots<f_d$ in $W$ is said to be {\em very fast growing} if, for $1<i\leq d$, $\weight(f_i) < 2^{-\max\supp(f_{i-1})}$. Denote the collection of all very fast growing sequences in $W$ by $\mathcal{F}_\mathrm{vfg}(W)$.
\end{dfn}
Note that any sequence of basis elements is very fast growing and that $\mathcal{F}_\mathrm{sp}(W)\subset \mathcal{F}_\mathrm{vfg}(W)$. The family $\mathcal{F}_\mathrm{vfg}(W)$ yields a constraint by applying it to Definition \ref{weighted def} \ref{restricted operation}. This type of constraint has been used to study the local and asymptotic structure of Banach spaces. Such structure has strong implications to spaces of operators, e.g., in \cite{argyros:motakis:2014} the first reflexive space with the invariant subspace property was constructed.

As it was already mentioned, an additional constraint needs to be introduced which comes from the metric space $(K,\rho)$. For the remainder of this section fix a sequence $(\kappa_i)_{i=1}^\infty$ in $K$ so that for all $n\in\N$, $\{\kappa_i:i\geq n\}$ is dense in $K$. Next, for a given norming set $W$, one associates to some  $f\in W$ an element $\kappa(f)$ of $K$.

\begin{dfn}
Let $W$ be a norming set. For every $n\in\N$ and $\la\in\D_\Q$ define $\kappa(\la e^*_n) = \kappa_n$.
\begin{enumerate}[label=(\alph*),leftmargin=19pt]

\item A sequence $f_1<\cdots<f_d$ in $\mathcal{F}_\mathrm{vfg}(W)$ is said to have {\em essentially rapidly converging supports} if, for $1\leq i\leq d$, $\kappa(f_i)$ is defined and there exists $\kappa_0\in K$ such that for $1<i\leq d$, $\rho(\kappa(f_i),\kappa_0) \leq 2^{-\max\supp(f_{i-1})}$. Denote the collection of all such sequences in $W$ by $\mathcal{F}_\mathrm{ercs}(W)$.

\item For an $f$ in $W$ that is the outcome of an $(m_j^{-1},\mathcal{A}_{n_j},\mathcal{F}_\mathrm{ercs}(W))$-operation applied to a sequence $(f_i)_{i=1}^d$ as above, define $\kappa(f) = \kappa_0$.

\end{enumerate}
\end{dfn}
Note that a sequence with essentially rapidly converging supports is always assumed to be very fast growing. This has notational advantages but it would also have been fine to disentangle the two notions. Similar to the weight function, the associated element $\kappa(f)$ is not unique and $\kappa$ is a partially defined multi valued function from $W$ to $K$.  Also note that the definition of $\kappa$ is implicit. This is formally sound and $\kappa$ is, at the very least, defined on the basis elements. So if a norming set $W$ is closed under infinitely many $(m_j^{-1},\mathcal{A}_{n_j},\mathcal{F}_\mathrm{ercs}(W))$-operations then the compactness of $K$ yields a wealth of functionals $f$ for which $\kappa(f)$ is defined. Indeed, any very fast growing sequence $(f_n)_n$, for which all $\kappa(f_n)$ are defined, has a subsequence with essentially rapidly converging supports on which the operations can be applied.

For a norming set $W$ denote $\mathcal{F}_\mathrm{ercs}^\mathrm{sp}(W) = \mathcal{F}_\mathrm{ercs}(W)\cap\mathcal{F}_\mathrm{sp}(W)$, i.e., the collection of special sequences with essentially rapidly converging supports. The time is ripe to define the norming set $W_{^{C\>\!\!(\>\!\!K\>\!\!)}}$ of the space $X_{^{C\>\!\!(\>\!\!K\>\!\!)}}$.

\begin{dfn}
Define $W_{^{C\>\!\!(\>\!\!K\>\!\!)}}$ to be the smallest subset of $c_{00}$ that satisfies the following properties.
\begin{enumerate}[label=(\roman*),leftmargin=21pt]

\item The set $W_{^{C\>\!\!(\>\!\!K\>\!\!)}}$ is a norming set.

\item For every even positive integer $2j\in\N$ the set $W_{^{C\>\!\!(\>\!\!K\>\!\!)}}$ is closed under the $(m_{2j}^{-1},\mathcal{A}_{n_{2j}}, \mathcal{F}_\mathrm{ercs}(W_{^{C\>\!\!(\>\!\!K\>\!\!)}}))$-operation.

\item For every odd positive integer $2j-1\in\N$ the set $W_{^{C\>\!\!(\>\!\!K\>\!\!)}}$ is closed under the $(m_{2j-1}^{-1},\mathcal{A}_{n_{2j-1}}, \mathcal{F}^\mathrm{sp}_\mathrm{ercs}(W_{^{C\>\!\!(\>\!\!K\>\!\!)}}))$-operation.

\end{enumerate}
\end{dfn}

It is useful to observe that the set $W_{^{C\>\!\!(\>\!\!K\>\!\!)}}$ can be defined as an increasing union of sets $W_m$, $m=0,1,\ldots$ where $W_0 = \{\la e_n: n\in\N\text{ and }\la\in\D_\Q\}$ and if $W_m$ has been defined then $W_{m+1}$ is the union of $W_m$ with the collection of all $\la Ef$, where $\la\in\D_\Q$, $E$ is an interval of $\N$, and $f$ is the outcome of an $(m_{2j}^{-1},\mathcal{A}_{n_{2j}}, \mathcal{F}_\mathrm{ercs}(W_m))$-operation or an $(m_{2j}^{-1},\mathcal{A}_{n_{2j}}, \mathcal{F}^\mathrm{sp}_\mathrm{ercs}(W_m))$-operation. This in particular implies that for every $f\in W_{^{C\>\!\!(\>\!\!K\>\!\!)}}$, $\kappa(f)$ is defined and that for every $\la\in\D_\Q$ and interval $E$ of $\N$ such that $\la Ef\neq 0$, $\kappa(\la Ef) = \kappa(f)$.

It is almost shocking that this norming set induces a space $X_{^{C\>\!\!(\>\!\!K\>\!\!)}}$ that is not HI. Unless $K$ is a singleton, $X_{^{C\>\!\!(\>\!\!K\>\!\!)}}$ contains decomposable subspaces (see Proposition \ref{unconditionality in X_K} \ref{hi singleton}). For a continuous function $\phi:K\to\C$ denote by $\hat \phi:c_{00}\to c_{00}$ the linear operator with $\hat\phi(e_n) = \phi(\kappa_n)e_n$. Note that $\hat \phi$, being a diagonal operator, is formally dual to itself, i.e., for every $f,x\in c_{00}$, $f(\hat\phi x) = (\hat\phi f)(x)$.

\begin{prop}
\label{mT diagonal bounded}
Let $\phi:K\to\C$ be a Lipschitz function. Then $\hat\phi$ extends to a bounded linear operator on $X_{^{C\>\!\!(\>\!\!K\>\!\!)}}$.
\end{prop}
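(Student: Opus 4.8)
The plan is to show that $\hat\phi$ maps the norming set $W_{^{C\>\!\!(\>\!\!K\>\!\!)}}$ (up to a bounded factor) into its closed convex balanced hull, which suffices to bound $\hat\phi$ on $X_{^{C\>\!\!(\>\!\!K\>\!\!)}}$. Since $\hat\phi$ is self-dual, it is enough to prove that there is a constant $C$ (depending on the Lipschitz constant of $\phi$ and on $\|\phi\|_\infty$) so that for every $f\in W_{^{C\>\!\!(\>\!\!K\>\!\!)}}$, the functional $\hat\phi f$ is a convex combination of elements of $C\cdot W_{^{C\>\!\!(\>\!\!K\>\!\!)}}$ in a suitable sense; equivalently, $|(\hat\phi f)(x)|\le C\|x\|$ for all $x$. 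I would prove this by induction on the levels $W_m$ of the inductive construction of $W_{^{C\>\!\!(\>\!\!K\>\!\!)}}$.

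First I would set up the induction. For $f = \lambda e_n^*\in W_0$ we have $\hat\phi f = \lambda\phi(\kappa_n)e_n^*$, which has norm at most $\|\phi\|_\infty$, handling the base case. For the inductive step, suppose $f = \frac{1}{m_j}\sum_{i=1}^d f_i$ is the outcome of an $(m_j^{-1},\mathcal{A}_{n_j},\mathcal{F}_\mathrm{ercs})$- or $(m_j^{-1},\mathcal{A}_{n_j},\mathcal{F}^\mathrm{sp}_\mathrm{ercs})$-operation applied to a sequence $(f_i)_{i=1}^d$ with essentially rapidly converging supports, and that the estimate holds for each $f_i$. The key algebraic identity is
\[
\hat\phi f \;=\; \frac{1}{m_j}\sum_{i=1}^d \hat\phi f_i \;=\; \phi(\kappa_0)\cdot\frac{1}{m_j}\sum_{i=1}^d f_i \;+\; \frac{1}{m_j}\sum_{i=1}^d \big(\phi(\kappa(f_i))-\phi(\kappa_0)\big)f_i \;+\; \frac{1}{m_j}\sum_{i=1}^d \big(\hat\phi f_i - \phi(\kappa(f_i))f_i\big),
\]
where $\kappa_0$ is the common limit point witnessing membership in $\mathcal{F}_\mathrm{ercs}$. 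The first term is $\phi(\kappa_0)f$, of norm at most $\|\phi\|_\infty$. For the second term, the Lipschitz condition gives $|\phi(\kappa(f_i))-\phi(\kappa_0)|\le \mathrm{Lip}(\phi)\cdot\rho(\kappa(f_i),\kappa_0)\le \mathrm{Lip}(\phi)\cdot 2^{-\max\supp(f_{i-1})}$ for $i>1$ (with the $i=1$ term controlled separately by $2\|\phi\|_\infty$). Thus, when evaluated against a normalized $x$, this term is dominated by $\frac{2\mathrm{Lip}(\phi)}{m_j}\sum_{i=1}^d 2^{-\max\supp(f_{i-1})}\|E_ix\|$ where $E_i$ is the support interval of $f_i$; a geometric/telescoping argument shows this is bounded by a fixed constant (this is essentially the standard very-fast-growing estimate, and $\tfrac1{m_j}\le\tfrac18$ helps). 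The third term is $\frac{1}{m_j}\sum_{i=1}^d(\hat\phi f_i - \phi(\kappa(f_i))f_i)$, and I would need the inductive hypothesis in the slightly stronger form that $\hat\phi f_i - \phi(\kappa(f_i))f_i \in C'\cdot W_{^{C\>\!\!(\>\!\!K\>\!\!)}}$ as an element (not just bounded), so that $\frac{1}{m_j}\sum_i$ of these is again — since the $(f_i)$ remain successive and the modified functionals have the same supports and weights — a valid outcome of the same constrained operation, landing in $C'\cdot W_{^{C\>\!\!(\>\!\!K\>\!\!)}}$. Combining, $\hat\phi f$ lies in $(\|\phi\|_\infty + C_{\mathrm{geom}}\mathrm{Lip}(\phi) + C')\cdot W_{^{C\>\!\!(\>\!\!K\>\!\!)}}$, closing the induction with $C' = \|\phi\|_\infty + C_{\mathrm{geom}}\mathrm{Lip}(\phi) + C'$ — so one should instead track the decomposition $\hat\phi f = \phi(\kappa(f))f + g_f$ with $g_f\in C'W_{^{C\>\!\!(\>\!\!K\>\!\!)}}$ and solve $C' = 2\|\phi\|_\infty + C_{\mathrm{geom}}\mathrm{Lip}(\phi)$ for the residual, which is self-consistent since the residual of the third term is already within $C'W_{^{C\>\!\!(\>\!\!K\>\!\!)}}$ scaled by $\tfrac1{m_j}$.

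The main obstacle is the bookkeeping in the inductive step: one must carry a decomposition $\hat\phi f = \phi(\kappa(f))f + g_f$ where the ``error'' $g_f$ is itself (a bounded multiple of) an element of the norming set, so that the operation-closure of $W_{^{C\>\!\!(\>\!\!K\>\!\!)}}$ can be reapplied to the errors at the next level — a naive norm bound does not propagate through the $\tfrac1{m_j}\sum$ because the number of summands $d$ can be as large as $n_j$. The essentially-rapidly-converging-supports constraint is exactly what makes the cross-term $\sum_i(\phi(\kappa(f_i))-\phi(\kappa_0))f_i$ summable after the $2^{-\max\supp}$ decay is exploited, and one should double-check that restricting to intervals $E$ and scalars $\lambda\in\mathbb{D}_\mathbb{Q}$ (the norming-set closure operations) interacts harmlessly with the decomposition, which it does since $\kappa(\lambda Ef)=\kappa(f)$ as noted after the definition. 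I would also need to verify the operation applied to the modified sequence $(g_{f_i})$ respects the $\mathcal{A}_{n_j}$ cardinality bound (it does, same $d$) and the very-fast-growing / special structure (same supports and, for special sequences, one must ensure the coding $\sigma$-constraint is untouched — here using that $g_{f_i}$ has the same support and we only need $(g_{f_i})$ to be in $\mathcal{F}_\mathrm{vfg}$, not $\mathcal{F}_\mathrm{sp}$, for the even operations, and treating the odd/special case with slightly more care or absorbing it since special sequences contribute boundedly).
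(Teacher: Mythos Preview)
Your inductive decomposition $\hat\phi f = \phi(\kappa(f))f + g_f$ is exactly the right object to track, and the paper proceeds this way too. But the invariant you propose --- that the error $g_f$ lie in $C'\cdot W$ \emph{as an element}, so that the constrained operation can be reapplied to $(g_{f_i})_i$ --- cannot be maintained. The obstruction is the odd-weight case, which you flag and then wave away. If $(f_1,\ldots,f_d)$ is a special sequence, the coding function $\sigma$ is injective and depends on the exact functionals, so $(g_{f_1}/C',\ldots,g_{f_d}/C')$ will not satisfy $\weight(g_{f_i}/C') = m_{4\sigma(g_{f_1}/C',\ldots,g_{f_{i-1}}/C')}^{-1}$; hence $\tfrac{1}{m_j}\sum_i g_{f_i}$ is not the outcome of any odd-weight operation available in $W$. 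There is no ``absorbing'' this: odd-weight functionals are in $W$ and contribute to the norm, and nothing forces them to be dominated by even-weight ones. Even in the even-weight case your scheme is shaky, since you need control over $\weight(g_{f_i}/C')$ and $\kappa(g_{f_i}/C')$ to reapply the $\mathcal{F}_\mathrm{ercs}$ constraint, and the inductive hypothesis as stated does not provide this.

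The fix --- and the paper's actual argument --- is to track a \emph{norm} bound on $g_f$, but one proportional to $\weight(f)$: specifically $\|g_f\|\le 3\,\weight(f)\,\|\phi\|_\infty$ for $f$ supported far enough to the right. Your worry that ``a naive norm bound does not propagate through $\tfrac{1}{m_j}\sum$ because $d$ can be as large as $n_j$'' is exactly what this refinement resolves. In the inductive step the third term has norm at most $\tfrac{1}{m_j}\sum_{i=1}^d 3\,\weight(f_i)\,\|\phi\|_\infty$, and the very-fast-growing constraint $\weight(f_i)<2^{-\max\supp(f_{i-1})}$ (built into $\mathcal{F}_\mathrm{ercs}$) makes $\sum_i\weight(f_i)$ geometrically bounded independently of $d$. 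No structural membership in $W$ is needed for the errors, and the special-sequence condition plays no role beyond $\mathcal{F}_\mathrm{sp}\subset\mathcal{F}_\mathrm{vfg}$.
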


With regards, to the proof of Proposition \ref{mT diagonal bounded}, the fact that $\phi$ is Lipschitz is not particularly important and by tweaking the metric (before defining $X_{^{C\>\!\!(\>\!\!K\>\!\!)}}$) any continuous function may be assumed Lipschitz. The trueness of this result stems from the fact that every $f$ in $W_{^{C\>\!\!(\>\!\!K\>\!\!)}}$ has a perturbation $\tilde f$ so that the set $\{\kappa_n:n\in\supp(\tilde f)\}$ has small diameter in $K$. Therefore $\phi$ is almost constant on this subset and thus $\hat\phi f$ is close to a scalar multiple of $f$. Crucially, there is no unique scalar that works for all $f$. The following statement makes this more precise while simultaneously yielding Proposition \ref{mT diagonal bounded}.

\begin{prop}
\label{mT diagonal bounded precise}
Let $\phi:K\to\C$ be a Lipschitz function. Then, there exists $N\in\N$ so that for every $f\in W_{^{C\>\!\!(\>\!\!K\>\!\!)}}$ with $\min\supp(f)\geq N$,
\begin{equation}
\label{mT diagonal bounded precise eq0}
\Big\|\hat\phi f - \phi\big(\kappa(f)\big)f\Big\| \leq 3\weight(f)\|\phi\|_\infty.
\end{equation}
Therefore, on $X_{^{C\>\!\!(\>\!\!K\>\!\!)}}$, $\|\hat\phi - P_{[1,N)} \hat\phi \| \leq 2\|\phi\|_\infty$. 
\end{prop}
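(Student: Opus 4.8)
The plan is to prove \eqref{mT diagonal bounded precise eq0} by induction on the minimal $m$ for which $f\in W_m$, using the inductive description of $W_{^{C(K)}}$ as $\bigcup_m W_m$. The base case $f = \lambda e_n^*$ is trivial, since $\hat\phi f = \phi(\kappa_n)f = \phi(\kappa(f))f$ and $\weight(f)=0$; more generally, the case $\weight(f)=0$ is immediate. For the inductive step, one writes $f = \lambda E g$ where $g = \tfrac1{m_j}\sum_{i=1}^d f_i$ is the outcome of an $(m_j^{-1},\mathcal{A}_{n_j},\mathcal{F}_{\mathrm{ercs}})$- or $(m_j^{-1},\mathcal{A}_{n_j},\mathcal{F}^{\mathrm{sp}}_{\mathrm{ercs}})$-operation on a sequence $(f_i)_{i=1}^d$ with essentially rapidly converging supports, witnessed by some $\kappa_0 = \kappa(g) = \kappa(f)$; since $\hat\phi$ commutes with $\lambda E$ (diagonal operators commute with interval restrictions up to the scalar), it suffices to treat $g$ itself. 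Here $\weight(g) = m_j^{-1}$.

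The key estimate decomposes $\hat\phi g - \phi(\kappa_0) g$ as
\[
\hat\phi g - \phi(\kappa_0)g \;=\; \frac{1}{m_j}\sum_{i=1}^d\Big(\hat\phi f_i - \phi\big(\kappa(f_i)\big)f_i\Big) \;+\; \frac{1}{m_j}\sum_{i=1}^d\big(\phi(\kappa(f_i)) - \phi(\kappa_0)\big)f_i.
\]
For the first sum I would apply the inductive hypothesis to each $f_i$ (all of which have $\min\supp \geq N$ since $\min\supp(g)\geq N$), giving $\|\hat\phi f_i - \phi(\kappa(f_i))f_i\| \leq 3\weight(f_i)\|\phi\|_\infty$; crucially the sequence $(f_i)$ is very fast growing, so $\weight(f_i) < 2^{-\max\supp(f_{i-1})}$ for $i>1$, and the term $\tfrac{1}{m_j}\sum_i \hat\phi f_i - \phi(\kappa(f_i))f_i$ has norm controlled by $\tfrac{1}{m_j}\big(3\weight(f_1) + 3\sum_{i>1} 2^{-\max\supp(f_{i-1})}\big)\|\phi\|_\infty$; here one has to be slightly careful that the partial-sum-of-perturbations is itself (a multiple of a vector supported compatibly with) something estimable — either by noting $\hat\phi f_i - \phi(\kappa(f_i))f_i$ is supported in $\supp(f_i)$ so one can use the norming-set/Schauder basis structure, or by a direct geometric-series bound — this yields a contribution $\leq \tfrac{1}{m_j}\cdot c\cdot\|\phi\|_\infty$ for a small absolute constant $c$. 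For the second sum, essentially rapidly converging supports gives $\rho(\kappa(f_i),\kappa_0)\leq 2^{-\max\supp(f_{i-1})}$ for $i>1$, so by the Lipschitz property (with constant $L$, absorbed after tweaking the metric so that $L\leq 1$, or kept explicit) $|\phi(\kappa(f_i))-\phi(\kappa_0)| \leq L\,2^{-\max\supp(f_{i-1})}$ for $i>1$, while for $i=1$ we only have the trivial bound $2\|\phi\|_\infty$. Thus the second sum contributes at most $\tfrac{1}{m_j}\big(2\|\phi\|_\infty + L\sum_{i>1}2^{-\max\supp(f_{i-1})}\cdot(\text{norm of }f_i\text{-tails})\big)$. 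Choosing $N$ large so that $\sum_{n\geq N}2^{-n}$ is tiny and using $m_j\geq m_1\geq 8$, everything beyond the $i=1$ terms is absorbed, and the two $i=1$ contributions are each $\leq \tfrac{1}{m_j}\cdot 2\|\phi\|_\infty$-ish; combining, $\|\hat\phi g - \phi(\kappa_0)g\| \leq 3 m_j^{-1}\|\phi\|_\infty = 3\weight(g)\|\phi\|_\infty$, closing the induction. (One must double-check the constants: the two "$i=1$" terms plus the negligible tails should total at most $3/m_j$ times $\|\phi\|_\infty$, which is where the factor $3$ in the statement comes from.)

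The hard part, and the step deserving the most care, is making the estimate of $\frac{1}{m_j}\sum_i(\hat\phi f_i - \phi(\kappa(f_i))f_i)$ rigorous: the inductive hypothesis controls $\|\hat\phi f_i - \phi(\kappa(f_i))f_i\|$ individually, but to add these up inside the $(m_j^{-1},\mathcal{A}_{n_j})$-average one needs that successive perturbations, being supported where the $f_i$ are supported and having geometrically decaying norms by the very-fast-growing condition, can be summed with a bound essentially equal to the largest term — this is a standard but slightly delicate "small perturbation of a very fast growing sequence" argument (of the kind appearing in rapidly increasing sequence estimates), and one should either invoke such a lemma or spell out that a block sequence $(g_i)$ with $\|g_i\|\leq \e_i$, $\sum_{i>1}\e_i$ small, has $\|\sum_i g_i\| \leq \e_1 + (\text{small})$ when tested against any $h\in W_{^{C(K)}}$. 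Once this is in hand, the final conclusion $\|\hat\phi - P_{[1,N)}\hat\phi\| \leq 2\|\phi\|_\infty$ follows: for $x\in X_{^{C(K)}}$, write $x = P_{[1,N)}x + P_{[N,\infty)}x$, note $(\hat\phi - P_{[1,N)}\hat\phi)x = P_{[N,\infty)}\hat\phi\, P_{[1,N)}x + (\hat\phi - P_{[1,N)}\hat\phi)P_{[N,\infty)}x$; the first term vanishes because $\hat\phi$ is diagonal (so $\hat\phi P_{[1,N)} = P_{[1,N)}\hat\phi P_{[1,N)}$), leaving $(\hat\phi - P_{[1,N)}\hat\phi)x = \hat\phi P_{[N,\infty)}x$, and for $y = P_{[N,\infty)}x$ one expands $\hat\phi y$ over a basis/norming-set argument, bounding $\|\hat\phi y\|$ by $\sup_f|\phi(\kappa(f))|\,\|y\| + (\text{perturbation from }\eqref{mT diagonal bounded precise eq0}) \leq \|\phi\|_\infty\|y\| + o(1)\|\phi\|_\infty\|y\| \leq 2\|\phi\|_\infty\|x\|$, where the $o(1)$ again comes from the $3\weight(f)\|\phi\|_\infty \leq \tfrac{3}{8}\|\phi\|_\infty$ bound aggregated appropriately. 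Modulo the standard perturbation lemma, all remaining computations are routine bookkeeping with the constants $m_1\geq 8$ and a sufficiently large $N$.
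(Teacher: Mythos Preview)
Your approach is essentially the same as the paper's: induction on the level $m$ in the decomposition $W_{^{C(K)}}=\bigcup_m W_m$, decomposing $\hat\phi g - \phi(\kappa_0)g$ exactly as you do, applying the inductive hypothesis to the first sum and the Lipschitz bound together with the essentially-rapidly-converging-supports condition to the second.

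The one place where you go astray is in identifying a ``hard part''. There is none. The estimate you flag as delicate is just the triangle inequality: since $\hat\phi f_i - \phi(\kappa(f_i))f_i$ is a vector with norm at most $3\weight(f_i)\|\phi\|_\infty$, one bounds
\[
\Big\|\frac{1}{m_j}\sum_{i=1}^d\big(\hat\phi f_i - \phi(\kappa(f_i))f_i\big)\Big\|
\leq \frac{1}{m_j}\sum_{i=1}^d 3\weight(f_i)\|\phi\|_\infty
\leq \frac{3\|\phi\|_\infty}{m_j}\Big(\weight(f_1) + \sum_{i\geq 2}2^{-\max\supp(f_{i-1})}\Big),
\]
and the very-fast-growing condition makes the tail a geometric series bounded by $2^{-(N-1)}$. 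No ``small perturbation of a very fast growing sequence'' lemma is needed; the geometric decay is already present in the individual bounds $3\weight(f_i)$, so the crude triangle inequality suffices. The paper does exactly this: normalizing $\|\phi\|_\infty=1$, it picks $N$ with $(3+L)2^{-(N-1)}\leq 5/8$, bounds the $i=1$ contributions by $3\weight(f_1)+2\leq 3/8+2=19/8$, and the remaining tails by $5/8$, giving $3/m_j$ on the nose.

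Minor point: your reduction ``it suffices to treat $g$ itself'' when $f=\lambda Eg$ is slightly off, since $\min\supp(g)$ could be smaller than $N$. The cleaner route (which the paper takes implicitly) is to absorb $\lambda E$ into the pieces, writing $f = m_j^{-1}\sum f_i$ with $f_i = \lambda E g_i\in W_m$, noting that the restricted sequence still has essentially rapidly converging supports with the same $\kappa_0$ and supports in $[N,\infty)$.
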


With this result at hand, and the Stone-Weierstrass theorem, it is not hard to see that $C(K)$ embeds isomorphically, as a Banach algebra, into $\mathcal{L}(X_{^{C\>\!\!(\>\!\!K\>\!\!)}})/\mathcal{SS}(X_{^{C\>\!\!(\>\!\!K\>\!\!)}})$ (here, it is necessary to use that for all $n\in\N$, $\{\kappa_i:i\geq n\}$ is dense in $K$). Although Proposition \ref{mT diagonal bounded precise} will be proved now, it will not be shown here that the aforementioned embedding is onto. The interested reader may be able to extrapolate this information from the Bourgain-Delbaen part of this paper.

\begin{proof}[Proof of Proposition \ref{mT diagonal bounded precise}]
Assume, without loss of generality, that $\|\phi\|_\infty = 1$ and denote by $L$ the Lipschitz constant of $\phi$. Pick $N\in\N$ sufficiently large so that $(3+L)/2^{N-1}\leq 5/8$. Statement \eqref{mT diagonal bounded precise eq0} is proved by induction on $m=0,1,\ldots$ for all $f\in W_m$ with $\min\supp(f)\geq N$. For $m = 0$ and $f = \la e_n\in W_0$, $\hat\phi f = \phi(\kappa_n)e_n = \phi(\kappa(f))f$ and thus the conclusion holds.

Assume next that \eqref{mT diagonal bounded precise eq0} is true for all $g\in W_m$ with $\min\supp(g) \geq N$ and let $f\in W_{m+1}$ with $\min\supp(f)\geq N$. Then, there exist $j\in\N$, $d\leq n_j$, and a sequence $(f_i)_{i=1}^d$ in $W_m$ with essentially rapidly converging supports in $[N,\infty)$ such that $f = m_j^{-1}\sum_{i=1}^df_i$. Then,
\begin{align*}
\Big\|\hat\phi f - \phi(\kappa(f))f\Big\| \span= \frac{1}{m_j}\Big\|\sum_{i=1}^d\big(\hat\phi(f_i) - \phi(\kappa(f))\big)f_i\Big\|\\
\leq& \frac{1}{m_j}\Big(\sum_{i=1}^d\Big\|\big(\hat\phi(f_i) - \phi(\kappa(f_i))\big)f_i\Big\| + \sum_{i=1}^d\Big|\phi(\kappa(f)) - \phi(\kappa(f_i))\Big|\|f_i\|\Big)\\
\leq& \frac{1}{m_j}\Big(3\weight(f_1) +  \sum_{i=2}^d3\weight(f_i) +2 + \sum_{i=2}^d L \rho(\kappa(f),\kappa(f_i))\Big)\\
\leq& \frac{1}{m_j}\Big(\frac{19}{8} + 3\sum_{i=2}^d 2^{-\max\supp(f_{i-1})} + L\sum_{i=2}^d2^{\max\supp(f_{i-1})}\Big)\\
\leq&\frac{1}{m_j}\big(19/8 + 5/8\big) = 3\weight(f).
\end{align*}
\end{proof}

The Bourgain-Delbaen construction that is about to follow is based on the same principles. In order to achieve an isometric result, some components of the definition are chosen more with more precision.

\section{The Bourgain-Delbaen $\mathscr{L}_\infty$-space $\X$}
\label{BD section}

A separable Banach space $X$ is a $\mathscr{L}_{\infty,C}$-space, where $C\geq 1$, if there exists an increasing sequence $(F_n)_n$ of finite dimensional subspaces of $X$, the union of which is dense in $X$ and so that for all $n\in\N$, $F_n$ is $C$-isomorphic to $\ell_\infty^{\mathrm{dim}(F_n)}$. Suppressing the constant $C$, $X$ is called a $\mathscr{L}_\infty$-space. The class of $\mathscr{L}_p$-spaces was introduced by Lindenstrauss and Pe\l czy\'nski in \cite{lindenstrauss:pelczynski:1968}. Bourgain and Delbaen introduced in \cite{bourgain:delbaen:1980} a method for constructing non-classical separable $\mathscr{L}_\infty$-spaces. It is one of the essential components in the solution of the scalar-plus-compact problem by Argyros and Haydon in \cite{argyros:haydon:2011} (the other being a mixed-Tsirelson implementation of the hereditarily indecomposable Gowers-Maurey space). The purpose of the first part of this section is to recall a very general Bourgain-Delbaen scheme that is based on \cite{argyros:gasparis:motakis:2016}, where it was proved that every separable $\mathscr{L}_\infty$-space is isomorphic to a Bourgain-Delbaen space. Following this introduction (and following in the footsteps of \cite{argyros:haydon:2011}), a Bourgain-Delbaen space modeled after the space $X_\mathrm{mT}$ is introduced. Finally, the extra ingredients discussed in Section \ref{mT section} are adjusted to this setting to define the space $\X$.

\subsection{General Bourgain-Delbaen $\mathscr{L}_\infty$-spaces} At the most basic level the idea behind the Bourgain-Delbaen construction method is very elegant. For two non-empty sets $\Ga_1\subset\Ga$ denote by $r_{_{\Ga_1}}:\ell_\infty(\Ga)\to\ell_\infty(\Ga_1)$ the usual restriction operator. Any linear right inverse $i:\ell_\infty(\Ga_1)\to\ell_\infty(\Ga)$ of $r_{_{\Ga_1}}$ will be called an {\em extension operator}, i.e., $i$ is a linear operator so that for all $x\in\ell_\infty(\Ga_1)$ and $\gamma\in\Ga_1$, $i(x)(\gamma) = x(\gamma)$.

The Bourgain-Delbaen scheme is an infinite inductive process in which one defines finite sets $\Ga_1\subset\Ga_2\subset\Ga_3\subset\cdots$ and extension operators $i_{1,2}:\ell_\infty(\Ga_1)\to \ell_\infty(\Ga_2)$, $i_{2,3}: \ell_\infty(\Ga_2)\to\ell_\infty(\Ga_3)$,\ldots. In the base step, one picks $\Ga_1$ and no extension operator. Having defined $\Ga_1,\ldots,\Ga_n$ and $i_{1,2},\ldots,i_{n-1,n}$, denote $\De_1 = \Ga_1,\De_2 = \Ga_2\setminus\Ga_1,\ldots,\De_n = \Ga_n\setminus\Ga_{n-1}$. Note that $\De_1,\ldots,\De_n$ are pairwise disjoint finite sets and $\Ga_i = \cup_{j=1}^i\De_j$, for $1\leq i\leq n$. To perform the $(n+1)$'th step, one chooses a finite set $\De_{n+1}$, that is disjoint from all previously defined ones, and an extension operator $i_{n,n+1}:\ell_\infty(\Ga_{n})\to\ell_\infty(\Ga_{n+1})$, where $\Ga_{n+1} = \Ga_n\cup\De_{n+1}$. Although this is the basic essence of the scheme, the inductive choice needs to be performed in a very special manner (described later) to achieve something of interest.

Note that for every $m<n$ this method yields an extension operator $i_{m,n} = i_{n-1,n}\circ\cdots\circ i_{m+1,m+2}\circ i_{m,m+1}:\ell_\infty(\Ga_m)\to\ell_\infty(\Ga_n)$. Also, denote $i_{n,n} = id:\ell_\infty(\Ga_n)\to\ell_\infty(\Ga_n)$. The condition under which this construction yields a $\mathscr{L}_\infty$-space is the following.

\begin{ass}
\label{BD assumption}
There exists $C\geq 1$ so that for every $m\leq n$, $\|i_{m,n}\| \leq C$.
\end{ass}

If this has been achieved, putting $\Ga = \cup_{q=1}^\infty\Ga_q$, for each $m\in\N$ define the extension operator $i_m = \lim_n i_{m,n} : \ell_\infty(\Ga_m)\to\ell_\infty(\Ga)$. Then, for all $n\in\N$, $\|i_n\| \leq C$ and, because $i_n$ is a right inverse of $r_{_{\Ga_n}}$, the space $Y_n = i_n(\ell_\infty(\Ga_n))$ is $C$-isomorphic to $\ell_\infty(\Ga_n)$. It also follows that $Y_1\subset Y_2\subset\cdots$ and therefore the space $\mathfrak{X}_{(\Ga_n,i_n)} = \overline{\cup_{n=1}^\infty Y_n}\subset\ell_\infty(\Ga)$ is a $\mathscr{L}_{\infty,C}$-space. Any space resulting from such a process is called a Bourgain-Delbaen $\mathscr{L}_\infty$-space.

On such a space $\mathfrak{X}_{(\Ga_n,i_n)}$, the extension operators are used to define a finite dimensional decomposition (FDD). For each $n\in\N$ the map $P_n = i_nr_{_{\Ga_n}}: \mathfrak{X}_{(\Ga_n,i_n)}\to Y_n$ is a projection of norm at most $C$. In fact $P_nP_m = P_{n\wedge m}$ and thus the sequence of spaces $Z_1 = P_1(\mathfrak{X}_{(\Ga_n,i_n)})=i_1(\ell_\infty(\De_1))$, $Z_n = (P_n-P_{n-1})(\mathfrak{X}_{(\Ga_n,i_n)}) = i_n(\ell_{\infty}(\Delta_n))$, $n\geq 2$, forms a FDD of $\mathfrak{X}_{(\Ga_n,i_n)}$. Denote, for all $m<n\in\N$, $P_{(m,n]} = P_n - P_m$ the associated projection onto the space $Z_{m+1}\oplus\cdots\oplus Z_n$ by. For any interval $I$ of $\N$ define $P_I$ analogously. It is also true that if, for all $n\in\N$ and $\ga\in\De_n$, one defines $d_\ga = i_n(e_\ga)$, then the sequence $((d_\ga)_{\ga\in\De_n})_{n=1}^\infty$ forms a Schauder basis of $\mathfrak{X}_{(\Ga_n,i_n)}$ (see \cite[Remark 2.10, page 688]{argyros:gasparis:motakis:2016}).

While carrying out the Bourgain-Delbaen construction, of particular importance are specific versions of the above projections that can be defined during the steps of the induction. For every $m\leq n\in\N$, and once the $n$'th step is complete, put $P_m^{(n)} = i_{m,n}r_{_{\Ga_m}}:\ell_\infty(\Ga_n)\to Y_m^{(n)} = i_{m,n}(\ell_\infty(\Ga_m))$. For an interval $I$ of $\{1,\ldots,n\}$ define $P^{(n)}_{I}$ analogously.

\subsection{Bourgain-Delbaen extension functionals}
In the $(n+1)$'th inductive step and having chosen $\Ga_1,\ldots,\Ga_n$ and $i_{1,2},\ldots,i_{n-1,n}$ one must define the set $\De_{n+1}$ and an extension operator $i_{n,n+1}:\ell_\infty(\Ga_{n})\to\ell_\infty(\Ga_{n+1})$. Presupposing that the index set $\De_{n+1}$ has been determined, defining $i_{n,n+1}$ is equivalent to finding linear functionals $c_\gamma^*:\ell_\infty(\Ga_{n})\to\C$, $\gamma\in\De_{n+1}$, so that for all $x\in\ell_\infty(\Ga_{n})$ and $\gamma\in\De_{n+1}$, $i_{n,n+1}(x)(\gamma) = c_\gamma^*(x)$. Thus, one may shift their focus on defining  $(c_\gamma^*)_{\gamma\in\De_{n+1}}$ instead of $i_{n,n+1}$ directly. For obvious reasons, each such $c_\ga^*$ is called an {\em extension functional}. 

Although, formally, for each $\ga\in\Ga_{n+1}$, $c_\ga^*$ is defined on $\ell_\infty(\Ga_n)$ in the end it can also be viewed as functional on $\ell_\infty(\Ga)$, and thus on $\mathfrak{X}_{(\Ga_n,i_n)}$ if Assumption \ref{BD assumption} is satisfied. This is done by identifying $c_\ga^*$ with $c_\ga^*\circ r_{_{\Ga_n}}$. Make the convention that for every $\ga\in\Ga_1$, $c_\ga^* = 0$ (this is natural as these are not truly extension functionals). By setting, for each $\ga\in\Ga$, $d_\ga^* = e_\ga^* - c_\ga^*$ it turns out that $d_\ga^*(d_{\ga'}) = \delta_{\ga,\ga'}$, i.e., $(d_\ga,d_\ga^*)_{\ga\in\Ga}$ forms a biorthogonal system in $\mathfrak{X}_{(\Ga_n,i_n)}\times\mathfrak{X}_{(\Ga_n,i_n)}^*$ (see \cite[Proposition 2.17 (i), page 690]{argyros:gasparis:motakis:2016}).

Bourgain and Delbaen pointed out in \cite[Lemma 4.1, page 161]{bourgain:delbaen:1980} that if the extension functionals $c_\ga^*$ are of a certain form, then Assumption \ref{BD assumption} will be automatically satisfied. Here, a specific case of this form is borrowed from the Argyros-Haydon construction in \cite{argyros:haydon:2011}. The following notation will be used frequently. For $n\in\N$ and $\gamma\in\De_n$, write $\rank(\ga) = n$.

\begin{prop}
\label{BD formula}
Assume that for every $n\in\N$ and $\gamma\in\Delta_{n+1}$ the functional $c_\gamma^*$ is either zero or of one of the following forms.
\begin{enumerate}[leftmargin=19pt,label=(\alph*)]

\item\label{age zero} There exist an interval $I\subset\{1,\ldots,n\}$,  $\eta\in\Ga_n$, $\la\in\D$, and  $j\in\N$ such that
\[c_\ga^* =  \frac{1}{m_j} \la e^*_\eta\circ P^{(n)}_{I}.\]

\item\label{age nonzero} There exist $\xi\in\Ga_{n-1}$, and interval $I$ of $\{\rank(\xi)+1,\ldots,n\}$, $\eta\in\Ga_n$ with $\rank(\eta) >\rank(\xi)$, $\la\in\D$,  and $j\in\N$ such that
\[c_\ga^* = e^*_\xi + \frac{1}{m_j}\la e^*_\eta\circ P^{(n)}_{I}.\]

\end{enumerate}
Then, for every $m\leq n\in\N$, $\|i_{m,n}\| \leq m_1/(m_1-2)$ and thus Assumption \ref{BD assumption} is satisfied.
\end{prop}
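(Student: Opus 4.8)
The plan is to prove the uniform bound $\|i_{m,n}\|\le m_1/(m_1-2)$ by induction on $n$ for all $m\le n$ simultaneously, exploiting the recursive structure $i_{m,n}=i_{n-1,n}\circ i_{m,n-1}$. Since $i_{m,n-1}$ maps into $\ell_\infty(\Ga_{n-1})$ and $i_{n-1,n}$ agrees with the identity on $\Ga_{n-1}$, it suffices to control the single-step operator $i_{n-1,n}:\ell_\infty(\Ga_{n-1})\to\ell_\infty(\Ga_n)$ when acting on vectors already in the range of earlier extensions. Concretely, I will show the stronger statement that for every $m\le n$ and every $x\in\ell_\infty(\Ga_m)$ one has $\|i_{m,n}(x)\|_\infty\le \big(m_1/(m_1-2)\big)\|x\|_\infty$; the inductive hypothesis is that this holds with $n$ replaced by any smaller value. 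Given $x\in\ell_\infty(\Ga_m)$, write $y=i_{m,n-1}(x)\in\ell_\infty(\Ga_{n-1})$, so by hypothesis $\|y\|_\infty\le C\|x\|_\infty$ where $C=m_1/(m_1-2)$, and $i_{m,n}(x)=i_{n-1,n}(y)$. On coordinates in $\Ga_{n-1}$ this equals $y$, so those are bounded by $C\|x\|_\infty$; the only thing to check is the coordinates $\ga\in\De_n$, i.e.\ the values $c_\ga^*(y)$.

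For the key estimate, fix $\ga\in\De_n$ and bound $|c_\ga^*(y)|$ using the two permissible forms. In case \ref{age zero}, $c_\ga^*(y)=\frac{1}{m_j}\la e_\eta^*\big(P_I^{(n-1)}y\big)$, so I need a bound on $\|P_I^{(n-1)}y\|_\infty$; since $P_I^{(n-1)}=P_{\max I}^{(n-1)}-P_{\min I - 1}^{(n-1)}$ and each $P_k^{(n-1)}=i_{k,n-1}r_{\Ga_k}$, applying the inductive hypothesis to $i_{k,n-1}$ (with $k<n-1$, or trivially $k=n-1$) gives $\|P_k^{(n-1)}y\|_\infty\le C\|y\|_\infty$, hence $\|P_I^{(n-1)}y\|_\infty\le 2C\|y\|_\infty$. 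Wait — that already loses a factor $2$; the sharper route is to observe that $P_I^{(n-1)}y$ is supported on $\Ga_{\max I}$ and agrees there with a difference of extensions of $r_{\Ga_{\max I}}y$, and to bootstrap the bound on $P_I$ itself as part of the induction. So I will enlarge the inductive statement to also assert $\|P_I^{(k)}\|\le C$ for all intervals $I$ and all $k\le n-1$ — this is the standard Argyros–Haydon packaging. Then $|c_\ga^*(y)|\le \frac{1}{m_j}|\la|\,C\|y\|_\infty\le \frac{1}{m_1}C\|y\|_\infty$. In case \ref{age nonzero}, $c_\ga^*(y)=e_\xi^*(y)+\frac{1}{m_j}\la e_\eta^*\big(P_I^{(n-1)}y\big)$ with $\rank(\xi)<\min I$, so $e_\xi^*(y)$ and $e_\eta^*(P_I^{(n-1)}y)$ see disjoint pieces of the FDD; but for an $\ell_\infty$ (sup) estimate disjointness does not help additively, so I simply bound $|c_\ga^*(y)|\le \|y\|_\infty+\frac{1}{m_1}C\|y\|_\infty$.

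Now combine: in the worst case (form \ref{age nonzero}) we get $\|i_{m,n}(x)\|_\infty\le \max\{C\|x\|_\infty,\ \|y\|_\infty+\tfrac{1}{m_1}C\|y\|_\infty\}$. Hmm, this needs $\|y\|_\infty$, not $\|x\|_\infty$, on the right — the additive constant from $e_\xi^*$ is where the geometric series enters. The correct bookkeeping is to not pass through $C\|x\|_\infty$ prematurely but to track the \emph{increment}: writing $i_{m,n}(x)=i_{m,n-1}(x)+\big(i_{n-1,n}i_{m,n-1}(x)-i_{m,n-1}(x)\big)$, the correction term is supported on $\De_n$ and, by the above, has sup-norm at most $\|i_{m,n-1}(x)\|_\infty$ coming from the $e_\xi^*$ term plus $\tfrac{1}{m_1}\|P_I^{(n-1)}i_{m,n-1}(x)\|_\infty\le \tfrac{2}{m_1}\|i_{m,n-1}(x)\|_\infty$ from the weighted term — but the $e_\xi^*$ contribution is \emph{not} a genuine increase because on $\De_n$ with form \ref{age nonzero} the value is designed to mimic a value already present, so the honest bound must use that $P_I^{(n-1)}i_{m,n-1}(x)$ and the inherited part overlap cleverly. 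I would instead follow Bourgain–Delbaen's original Lemma 4.1 verbatim: one proves $\|i_{m,n}\|\le C$ by checking that each new coordinate is a convex-type combination controlled by $\tfrac{2}{m_j}\le \tfrac{2}{m_1}$ of earlier sup-norms plus an inherited coordinate, yielding the fixed-point inequality $C\le 1+\tfrac{2}{m_1}C$, whose solution is exactly $C=m_1/(m_1-2)$ (using $m_1\ge 8>2$).

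The main obstacle is precisely this last point: forms \ref{age zero} and \ref{age nonzero} are \emph{not} of the naive type $c_\ga^*=\sum \text{(small)}\cdot e_\eta^*$, because of the unweighted leading term $e_\xi^*$ in \ref{age nonzero}. Handling it requires the observation — essentially the defining property of the Bourgain–Delbaen scheme — that adding $e_\xi^*$ with $\rank(\xi)<\min I$ does not inflate the sup-norm since the ``new'' coordinate value at $\ga$ equals an inherited coordinate value at $\xi$ plus a genuinely small perturbation of norm $\le\tfrac{2}{m_j}\|i_{m,n-1}(x)\|$. Once that is set up correctly (tracking norms on $\ell_\infty(\De_n)$ versus $\ell_\infty(\Ga_{n-1})$ separately and using $P_I^{(n-1)}$ bounds from the strengthened induction hypothesis), the geometric estimate closes and gives $\|i_{m,n}\|\le m_1/(m_1-2)$, hence Assumption \ref{BD assumption} with $C=m_1/(m_1-2)$.
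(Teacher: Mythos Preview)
The paper does not give its own proof here; it simply cites \cite[Lemma 4.1]{bourgain:delbaen:1980} and \cite[Theorem 3.5]{argyros:haydon:2011}. Your overall strategy --- strong induction on $n$, strengthening the hypothesis to include $\|P_I^{(k)}\|\le 2C$, and closing via the fixed-point relation $C=1+\tfrac{2}{m_1}C$ --- is exactly the standard Argyros--Haydon argument, so in that sense you are aligned with what the paper invokes.

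That said, there is a genuine gap in your write-up at precisely the point you flag as ``the main obstacle.'' You correctly observe that in form \ref{age nonzero} the naive bound gives $|c_\ga^*(y)|\le C+\tfrac{2C}{m_1}>C$, and you assert that the $e_\xi^*$ term ``does not inflate the sup-norm'' because of some clever overlap --- but you never say what that mechanism is, and your final inequality $C\le 1+\tfrac{2}{m_1}C$ has a ``$1$'' on the right whose provenance you leave unexplained. The missing observation is a dichotomy coming from the constraint $I\subset\{\rank(\xi)+1,\ldots,n\}$ in form \ref{age nonzero}. Write $y=i_{m,n}(x)$ with $\|x\|=1$. If $\min I>m$, then $P_I^{(n)}y=P_{[1,\max I]}^{(n)}y-P_{[1,\min I-1]}^{(n)}y=y-y=0$ (both initial projections act as the identity on $i_{m,n}(\ell_\infty(\Ga_m))$), so the weighted term vanishes entirely and $c_\ga^*(y)=y(\xi)$, which is bounded by $C$ via the inductive hypothesis. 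If instead $\min I\le m$, then the constraint $\rank(\xi)<\min I$ forces $\xi\in\Ga_m$, whence $y(\xi)=x(\xi)$ and $|y(\xi)|\le\|x\|=1$; combining with $|e_\eta^*(P_I^{(n)}y)|\le 2C$ gives $|c_\ga^*(y)|\le 1+\tfrac{2C}{m_1}=C$. This dichotomy is what produces the ``$1$'' and is the heart of the proof; without it the induction does not close.
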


\begin{proof}
See \cite[Lemma 4.1, page 161]{bourgain:delbaen:1980} or \cite[Theorem 3.5, page 11]{argyros:haydon:2011}.
\end{proof}

\begin{rem}
\label{component norms}
If the assumptions of Proposition \ref{BD formula} are satisfied then
\begin{enumerate}[leftmargin=21pt,label=(\roman*)]

\item for all $m\leq n\in\N$, $\|P_{[1,n]}\| \leq 2$ and  $\|P_{[m,n]}\|\ \leq 3$ and

\item for all $\ga\in\Ga$, $\|d_\ga\| \leq 2$ and $\|d_\ga^*\|\leq 3$.

\end{enumerate}
Indeed, $P_{[1,n]} = i_{n}r_{_{\Ga_n}}$ and $\|i_n\|\ \leq m_1/(m_1-2) \leq 4/3$. Also, $d_\ga = i_{\rank(\ga)}(e_\ga)$ and $d_\ga^* = e_\ga^*\circ P_{[\rank(\ga),\infty)}$. 
\end{rem}

\subsection{A Mixed-Tsirelson Bourgain-Delbaen $\mathscr{L}_\infty$-space $\mathfrak{X}_\mathrm{mT}$}
As it was explained in \cite{argyros:haydon:2011}, a Bourgain-Delbaen $\mathscr{L}_\infty$-space can be constructed by specifying a set of instructions (i.e., an algorithm) that takes as input disjoint index sets $\De_1,\ldots,\De_n$ (with perhaps additional information encoded in them via certain functions) and returns an index set $\Delta_{n+1}$ together with functionals $c_\ga^*$, $\ga\in\De_{n+1}$, that adhere to the assumptions of Proposition \ref{BD formula}.

The following set of instructions defines a Bourgain-Delbaen space $\mathfrak{X}_\mathrm{mT} = \mathfrak{X}_{(\bar\Ga_n,\bar i_n)}$ that is based on $X_\mathrm{mT}$ from Section \ref{mT section}. The bar-notation is used to differentiate the objects associated to $\mathfrak{X}_\mathrm{mT}$ from the ones associated to the final space $\X$. To facilitate the forthcoming construction of the space $\X$, there will be an additional involved parameter, namely an (as of yet unspecified) increasing sequence $(K_n)_n$ of finite subsets of $K$. For each $n\in\N$, denote $\D_n = \{(k_1/2^n)e^{i (k_2/2^n)2\pi}: k_1,k_2 = 1,\ldots,2^n\}$. Each constructed $\gamma$ will have additional information encoded in it, namely a weight and an age.

\begin{inst}
Put
\[\bar\Ga_1 = \bar\De_1 =  \{(1,\kappa):\kappa\in K_1\}\]
and for $\gamma \in \bar\De_1$ define $\weight(\gamma) = 0$ and leave $\age(\gamma)$ undefined.

Assume that $\bar\Ga_1,\ldots,\bar\Ga_n$ as well as $\bar i_{1,2},\ldots,\bar i_{n-1,n}$ have been defined. Also assume that a function $\weight:\bar\Ga_n\to\{m_j^{-1}:j\in\N\}\cup\{0\}$ and a partially defined function $\age:\bar\Ga_n\to\N$ have been constructed.

The set $\bar\De_{n+1}$ is defined as the disjoint union of sets $\bar\De^0_{n+1}$, $\bar\De^\text{\ref{age zero}}_{n+1}$, and $\bar\De^\text{\ref{age nonzero}}_{n+1}$. First, put
\begin{equation*}
\bar\De_{n+1}^0 = \Big\{(n+1,\kappa): \kappa\in K_{n+1}\Big\}
\end{equation*}
and for $\ga\in\bar\De_{n+1}$ define $\weight(\ga) = 0$, leave $\age(\ga)$ undefined, and put $\bar c_\ga^* = 0$.

Let
\begin{align*}
\bar\De_{n+1}^\text{\ref{age zero}} = \Big\{&(n+1,I,\eta,\la,m_j^{-1},\kappa):\;I\text{ is an inteval of }\{1,\ldots,n\},\\
&\eta\in\bar\Ga_n,\;\la \in\D_{n+1},\;j\in\{1,\ldots,n+1\},\text{ and  }\kappa\in K_{n+1}\Big\}.
\end{align*}
For  $\ga = (n+1,I,\eta,\la,m_j^{-1},\kappa)\in\bar\De^\text{\ref{age zero}}_{n+1}$ define $\weight(\ga) = m_j^{-1}$, $\age(\ga) = 1$, and
\[\bar c_\ga^* = \frac{1}{m_j}\la \bar e^*_\eta\circ \bar P_I^{(n)}.\]
Note that for $\ga\in\bar\De_{n+1}^\text{\ref{age zero}}$, $\bar c_\ga^*$ is of type \ref{age zero} from Proposition \ref{BD formula}.

Next, let
\begin{align*}
\bar\De_{n+1}^\text{\ref{age nonzero}} = \Big\{&(n+1,\xi,I,\eta,\la,m_j^{-1},\kappa):\;\xi\in \bar\Ga_{n-1}\text{ with }\weight(\xi) = m_j^{-1}\text{ and}\\
&\age(\xi)<n_j,\;I\text{ is an inteval of }\{\rank(\xi)+1,\ldots,n\},\\
&\eta\in\bar\Ga_n,\;\la \in\bar\D_{n+1},\;j\in\{1,\ldots,n+1\},\text{ and  }\kappa\in K_{n+1}\Big\}.
\end{align*}
For $(n+1,\xi,I,\eta,\la,m_j^{-1},\kappa)\in\bar\De_{n+1}^\text{\ref{age nonzero}}$ define $\weight(\ga) = m_j^{-1}$, $\age(\ga) = \age(\xi)+ 1$, and
\[\bar c_\ga^* =\bar e_\xi^* + \frac{1}{m_j}\la \bar e_\eta^* \bar P_I^{(n)}.\]
Note that for $\ga\in\bar\De_{n+1}^\text{\ref{age nonzero}}$, $\bar c_\ga^*$ is of type \ref{age nonzero} from Proposition \ref{BD formula}.

Define $\bar\De_{n+1} = \bar\De_{n+1}^0\cup \bar\De_{n+1}^\text{\ref{age zero}}\cup \bar\De_{n+1}^\text{\ref{age nonzero}}$.
\end{inst}

This set of instructions defines the Bourgain-Delbaen $\mathscr{L}_{\infty}$-space $\mathfrak{X}_\mathrm{mT}$. Note that the space $\mathfrak{X}_\mathrm{mT}$ is similar to the space $\mathfrak{B}_\mathrm{mT}$ from \cite{argyros:haydon:2011}. There are three differences. The first one is that here, for each $n\in\N$, there are several copies of each extension functional (even the zero one) indexed over $K_n$. The second difference is that here no convex combinations of $e_\eta^*$ are allowed. As already mentioned, this is necessary to perform a saturation under constraints with increasing weights in the flavour of \cite{argyros:motakis:2020}. The final difference is that in \cite{argyros:haydon:2011} there was no need to use the entire collection of intervals $I$ used here. This full collection was also used in the construction in \cite{argyros:motakis:2019}.

The claimed connection to the space $X_\mathrm{mT}$ is made a lot clearer by the {\em evaluation analysis} of each $\ga\in\bar\Ga$. This concept is from \cite{argyros:haydon:2011}. Note that every $\ga\in\Ga$ with non-zero $\weight(\ga) = m_j^{-1}$ has an $\age(\ga) = a\leq n_j$. 

\begin{prop}
\label{BDmT evaluation analysis}
Let $\ga\in\bar\Ga$ have $\weight(\ga) = m_j^{-1}$ and $\age(\ga) = a\leq n_j$. Then, there exist
\begin{enumerate}[label=(\roman*),leftmargin=22pt]

\item $\xi_1,\ldots,\xi_a\in\bar\Ga$ with $j\leq\rank(\xi_1)<\cdots<\rank(\xi_a)$ and $\xi_a = \ga$,

\item intervals $I_1$ of $[1,\rank(\xi_1))$ and $I_r$ of $(\rank(\xi_{r-1}),\rank(\xi_r))$, $r=2,\ldots,a$,

\item $\eta_r\in\bar\Ga$ with $\rank(\eta_r) < \rank(\xi_r)$, for $r=1,\ldots,a$, and

\item $\la_r\in\D_{\rank(\xi_r)}$, for $r=1,\ldots,a$,

\end{enumerate}
such that
\[\bar e_\ga^* = \sum_{r=1}^a\bar d_{\xi_r}^* + \frac{1}{m_j}\sum_{r=1}^a \la_r \bar e_{\eta_r}^*\circ \bar P_{I_r}.\]
\end{prop}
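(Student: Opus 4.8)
The plan is to argue by induction on the age $a=\age(\ga)$, imitating the evaluation analysis of \cite{argyros:haydon:2011}; everything will be read off from the recursive definition of the index sets $\bar\De_n$ together with the biorthogonality identity $\bar d_\ga^* = \bar e_\ga^* - \bar c_\ga^*$.

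For the base case $a=1$ I would first observe that, since $\weight(\ga)=m_j^{-1}\neq 0$, the element $\ga$ lies in no $\bar\De_n^0$, and since $\age(\ga)=1$ it lies in no $\bar\De_n^{\text{\ref{age nonzero}}}$ either; hence $\ga=(n+1,I,\eta,\la,m_j^{-1},\kappa)\in\bar\De_{n+1}^{\text{\ref{age zero}}}$ for some $n$, with $\bar c_\ga^*=\frac{1}{m_j}\la\,\bar e_\eta^*\circ\bar P_I^{(n)}$. Then
\[\bar e_\ga^* = \bar d_\ga^* + \bar c_\ga^* = \bar d_\ga^* + \frac{1}{m_j}\la\,\bar e_\eta^*\circ\bar P_I^{(n)},\]
so $\xi_1=\ga$, $I_1=I$, $\eta_1=\eta$, $\la_1=\la$ give the formula for $a=1$, and the numerical constraints come straight from the definition of $\bar\De_{n+1}^{\text{\ref{age zero}}}$: $j\leq n+1=\rank(\xi_1)$; $I_1$ is an interval of $\{1,\dots,n\}=[1,\rank(\xi_1))$; $\rank(\eta_1)\leq n<\rank(\xi_1)$ because $\eta\in\bar\Ga_n$; and $\la_1\in\D_{n+1}=\D_{\rank(\xi_1)}$.

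For the inductive step, assuming the claim for all elements of age $<a$ and $\age(\ga)=a\geq 2$, the same reasoning forces $\ga$ into some $\bar\De_{n+1}^{\text{\ref{age nonzero}}}$, say $\ga=(n+1,\xi,I,\eta,\la,m_j^{-1},\kappa)$ with $\bar c_\ga^*=\bar e_\xi^* + \frac{1}{m_j}\la\,\bar e_\eta^*\circ\bar P_I^{(n)}$, where $\xi\in\bar\Ga_{n-1}$, $\weight(\xi)=m_j^{-1}$ and $\age(\xi)=a-1<n_j$. I would then apply the inductive hypothesis to $\xi$, obtaining $\xi_1,\dots,\xi_{a-1}$ (with $\xi_{a-1}=\xi$ and $j\leq\rank(\xi_1)<\cdots<\rank(\xi_{a-1})$), intervals $I_1,\dots,I_{a-1}$, elements $\eta_1,\dots,\eta_{a-1}$ and scalars $\la_1,\dots,\la_{a-1}$ with
\[\bar e_\xi^* = \sum_{r=1}^{a-1}\bar d_{\xi_r}^* + \frac{1}{m_j}\sum_{r=1}^{a-1}\la_r\,\bar e_{\eta_r}^*\circ\bar P_{I_r},\]
substitute this into $\bar e_\ga^* = \bar d_\ga^* + \bar e_\xi^* + \frac{1}{m_j}\la\,\bar e_\eta^*\circ\bar P_I^{(n)}$, and set $\xi_a=\ga$, $I_a=I$, $\eta_a=\eta$, $\la_a=\la$. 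The remaining constraints then follow: $\rank(\xi_{a-1})=\rank(\xi)\leq n-1<n+1=\rank(\xi_a)$, so the ranks stay strictly increasing with $\xi_a=\ga$; $I_a=I$ is an interval of $\{\rank(\xi)+1,\dots,n\}=(\rank(\xi_{a-1}),\rank(\xi_a))$; $\rank(\eta_a)=\rank(\eta)\leq n<\rank(\xi_a)$; $\la_a=\la\in\D_{n+1}=\D_{\rank(\xi_a)}$; the bound $j\leq\rank(\xi_1)$ is inherited; and $a\leq n_j$ since $\age(\xi)<n_j$.

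The only delicate point I anticipate is the interpretation of the terms $\bar e_{\eta_r}^*\circ\bar P_{I_r}$: the Instruction produces them in the stage-$n$ form $\bar e_\eta^*\circ\bar P_I^{(n)}$, whereas in the statement $\bar P_{I_r}$ is the global FDD interval projection on $\mathfrak{X}_\mathrm{mT}$. These agree once restricted to $\mathfrak{X}_\mathrm{mT}$, by the compatibility $r_{\bar\Ga_n}\circ\bar P_I=\bar P_I^{(n)}\circ r_{\bar\Ga_n}$ of the partial projections with the global ones (valid since $\rank(\eta)\leq n$ and $I\subseteq\{1,\dots,n\}$), which is part of the general Bourgain-Delbaen formalism recalled from \cite{argyros:gasparis:motakis:2016} and \cite{argyros:haydon:2011}. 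Apart from this bookkeeping there is no genuine obstacle: the proposition is essentially a verbatim transcription of the evaluation analysis of \cite{argyros:haydon:2011} to the present index sets, and the two features distinguishing $\mathfrak{X}_\mathrm{mT}$ from $\mathfrak{B}_\mathrm{mT}$ (the extra $K_n$-indexed copies and the larger family of admissible intervals $I$) are irrelevant to this computation.
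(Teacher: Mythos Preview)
Your proof is correct and follows exactly the approach the paper takes: induction on $\age(\ga)$, writing $\bar e_\ga^*=\bar d_\ga^*+\bar c_\ga^*$ and unravelling the recursive definition of $\bar c_\ga^*$, with reference to \cite[Proposition 4.5]{argyros:haydon:2011}. Your version simply spells out in full the details that the paper leaves to that reference, including the bookkeeping about $\bar P_I^{(n)}$ versus $\bar P_I$.
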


\begin{proof}
This is proved by induction on $\age(\ga)$, by writing $\bar e_\ga^* = \bar d_\ga^* + \bar c_\ga^*$ and unravelling the definition of $\bar c_\ga^*$. See \cite[Proposition 4.5, page 15]{argyros:haydon:2011} for further details.
\end{proof}

Intuitively, each $\bar e_\ga^*$ can be obtained as the outcome of a certain kind of $(m_j^{-1},\mathcal{A}_{n_j})$-operation applied to other $\bar e_\eta^*$.

\subsection{Definition of $\X$}
To define the space $\X$, an appropriate subset $\Ga$ of $\bar \Ga$ will be chosen. Recall that bar-notation is used for objects relevant to $\mathfrak{X}_\mathrm{mT}$ whereas objects without bars are related to $\X$. Start with the sets $(\bar\De_n)_n$. Put $\De_1 = \bar \De_1$ and for each $n\geq 2$ choose a suitable $\De_n\subset\bar\De_n$. Among other specialized properties, the set $\Ga = \cup_n\De_n$ will be a self-determined subset of $\bar\Ga = \cup_n\bar\De_n$. This means that for every $n\in\N$ and $\ga\in\De_{n+1}$, whichever $\xi$ or $\eta$ appear in the defining formula of $\bar c_\ga^*$ must be in $\Ga_n$. For example, if $\ga\in\De_{n+1}\cap \bar\De_{n+1}^\text{\ref{age nonzero}}$ and $\bar c_\ga^* = \bar e_\xi^* + m_j^{-1}\la \bar e^*_\eta\circ\bar P^{(n)}_I$, then $\xi,\eta\in \Ga_n$. Then the functional $c_\ga^* = e_\xi^* + m_j^{-1}\la e^*_\eta\circ P^{(n)}_I:\ell_\infty(\Ga_n)\to\C$ is well defined and is of type \ref{age nonzero} from Proposition \ref{BD formula}. A similar situation occurs for $\ga\in\De_{n+1}$ that are in $\bar\De_{n+1}^\text{\ref{age zero}}$ or $\bar\De^0_{n+1}$. Therefore, this process yields a Bourgain-Delbaen $\mathscr{L}_\infty$-space $\mathfrak{X}_{(\Ga_n,i_n)}$, which will be the space $\X$. This technique was also used by Tarbard in \cite{tarbard:2012} and \cite{tarbard:2013}, by Argyros and the author  in \cite{argyros:motakis:2019}, and by Manoussakis, Pelczar-Barwacz, and  \'Swi\c etek in \cite{manoussakis:pelczar:swietek:2017}. It was formulated as a method for general  Bourgain-Delbaen spaces by Argyros and the author in \cite{argyros:motakis:2019}. According to \cite[Proposition 1.12, page 1893]{argyros:motakis:2019}, the restriction map $r_{_\Ga}$ on $\mathfrak{X}_\mathrm{mT}$ is a quotient map onto $\X$. This process is analogous to the fact that $W_{^{C\>\!\!(\>\!\!K\>\!\!)}}$ is built as a subset of $W_\mathrm{mT}$.

The choice of $\Ga$ requires the enforcement of certain constraints onto its member $\gamma$. The first one comes from a Maurey-Rosenthal coding function applied to the construction of $c^*_\ga$ when $\gamma$ has odd weight. Therefore, fix an injection $\sigma:\bar\Ga\to \N$ such that for every $\gamma\in\bar\Ga$, $m_{4\sigma(\ga)} >  2^{\rank(\ga)}$.

The second constraint pertains to the weight of $\eta$, which needs to be sufficiently small, whenever it appears in the definition of a given $\ga$. For this, it is necessary to define an additional weight function. For every interval $I$ of $\N$ and $\ga\in\bar\Ga$ define
\begin{equation}
\label{new weight}
\weight(I,\ga) = \left\{
	\begin{array}{ll}
	0 & \mbox{if } \rank(\ga) \leq \min(I),\\
	\weight(\ga) & \mbox{otherwise.}
	\end{array}
\right.
\end{equation}
The utility of this is that in the end, for every $\ga\in\Ga$, if $\rank(\ga) = \min(I)$ then $e_\ga^*\circ P_I = d_\ga^*$. This, being a basis element, models the behaviour of basis elements in $W_{^{C\>\!\!(\>\!\!K\>\!\!)}}$, which by convention have zero weight. If $\rank(\ga)<\min(I)$, then $e_\ga^*\circ P_I = 0$.

The final constraint relates to the metric space $K$ and is encoded in the sets $(K_n)_n$ which were used as a parameter in the construction of $\bar \Ga$. Recall that for every $\ga\in\bar\Ga$, with $\rank(\ga) = n$, its last coordinate is some $\kappa\in K_n$. Define the function $\kappa:\bar\Ga\to\cup_n K_n\subset K$ that retrieves from each $\ga$ its final coordinate $\kappa = \kappa(\ga)$. This will be used as follows. For any $\ga$ and for whichever $\xi$, $\eta$ appear in the defining formula of $c_\ga^*$, the elements $\kappa(\ga)$, $\kappa(\xi)$, and $\kappa(\eta)$ must be close to one another in $K$.

To specify the sequence of sets $(K_n)_n$, fix a countable dense subset $\mathcal{A}$ of $C(K)$ that is closed under $\C_\Q$-linear combinations. Enumerate the set $\mathcal{B} = \{\phi\in\mathcal{A}:\|\phi\|_\infty\leq 1\}$ as $\{\phi_1,\phi_2,\ldots\}$. For all $n\in\N$ define
\begin{equation*}
\mathcal{B}_n = \Big\{\theta^k\phi_{i_1}\phi_{i_2}\cdots \phi_{i_k}:k\in\N, 0\leq \theta\leq\frac{n}{n+1},\text{ and }1\leq i_1,i_2,\ldots,i_k\leq n\Big\}.
\end{equation*}
Then, $\mathcal{B}_1\subset\mathcal{B}_2\subset\cdots$ and each $\mathcal{B}_n$ is a compact multiplicative subsemigroup of the unit ball of $C(K)$. Define the equivalent metric on $K$
\begin{equation*}
\varrho(\kappa,\kappa') = \sum_{n=1}^\infty2^{-n}\max\Big\{\Big|\phi(\kappa) - \phi(\kappa')\Big|:\phi\in\mathcal{B}_n\Big\}.
\end{equation*}
This change of metric will make it possible to renorm the space $\X$ to achieve an isometric result. What will eventually be required is that $(\mathcal{B}_n)_n$ is increasing and that for each $n\in\N$ every $\phi\in\mathcal{B}_n$ is $2^n$-Lipschitz and $\mathcal{B}_n$ is a multiplicative semigroup. Note that the equivalence of the metric follows from the compactness of the sets $\mathcal{B}_n$, which will not be used again. Now, fix an increasing sequence of finite subsets $(K_n)_n$ of $K$ so that, for each $n\in\N$, $K_n$ is an $m_n^{-1}2^{-(n+1)}$-net of $K$ (with respect to $\varrho$).

It is time to define the space $\X$.

\begin{inst}
\label{main def}
Define $\Ga_1 = \De_1 = \bar \De_1$. Assume that $\Ga_1,\ldots,\Ga_n$ as well as $i_{1,2},\ldots, i_{n-1,n}$ have been defined.

The set $\De_{n+1}$ is defined as the disjoint union of sets $\De^0_{n+1}$, $\De^\text{\ref{age zero},even}_{n+1}$, $\De^\text{\ref{age zero},odd}_{n+1}$, $\De^\text{\ref{age nonzero},even}_{n+1}$, and $\De^\text{\ref{age nonzero},odd}_{n+1}$. First, put $\De^0_{n+1} = \bar\De^0_{n+1}$. Let
\begin{align*}
\De_{n+1}^\text{\ref{age zero},even} = \Big\{&(n+1,I,\eta,\la,m_j^{-1},\kappa)\in\bar\De_{n+1}^\text{\ref{age zero}}:\;j\in(2\N)\text{ and }\eta\in\Ga_n\Big\}\\
\De_{n+1}^\text{\ref{age zero},odd} = \Big\{&(n+1,I,\eta,\la,m_j^{-1},\kappa)\in\bar\De_{n+1}^\text{\ref{age zero}}:\;j\in(2\N-1),\\
&\eta\in\Ga_n\text{ and }\weight(\eta)=m^{-1}_{4i-2}\text{, for some }i\in\N\Big\}.
\end{align*}
Put $\De^\text{\ref{age zero}}_{n+1} = \De^\text{\ref{age zero},even}_{n+1}\cup\De^\text{\ref{age zero},odd}_{n+1}$ and for  $\ga = (n+1,I,\eta,\la,m_j^{-1},\kappa)\in\De^\text{\ref{age zero}}_{n+1}$ define
\[ c_\ga^* = \frac{1}{m_j}\la e^*_\eta\circ P_I^{(n)}.\]
Note that for $\ga\in\De^\text{\ref{age zero}}_{n+1}$, $c_\ga^*$ is of type \ref{age zero} from Proposition \ref{BD formula}.

Next, let
\begin{align*}
\De_{n+1}^\text{\ref{age nonzero},even} = \Big\{&(n+1,\xi,I,\eta,\la,m_j^{-1},\kappa)\in\bar\De_{n+1}^\text{\ref{age nonzero}}:\;j\in(2\N),\;\xi\in\Ga_{n-1},\;\eta\in\Ga_n\\
&\text{with }\weight(I,\eta) \leq 2^{-\rank(\xi)},\;\varrho(\kappa(\xi),\kappa)\leq m_j^{-1}2^{-\rank(\xi)},\\
&\text{and }\varrho(\kappa(\eta),\kappa) \leq 2^{-\rank(\xi)}\Big\}.\\
\De_{n+1}^\text{\ref{age nonzero},odd} = \Big\{&(n+1,\xi,I,\eta,\la,m_j^{-1},\kappa)\in\bar\De_{n+1}^\text{\ref{age nonzero}}:\;j\in(2\N-1),\;\xi\in\Ga_{n-1},\\
&\eta\in\Ga_n\text{ with }\weight(\eta) = m^{-1}_{4\sigma(\xi)},\;\varrho(\kappa(\xi),\kappa)\leq m_j^{-1}2^{-\rank(\xi)},\\
&\text{and }\varrho(\kappa(\eta),\kappa) \leq 2^{-\rank(\xi)}\Big\}.
\end{align*}
Put $\De_{n+1}^\text{\ref{age nonzero}} =\De_{n+1}^\text{\ref{age nonzero},even} \cup \De_{n+1}^\text{\ref{age nonzero},odd}$ and  for $(n+1,\xi,I,\eta,\la,m_j^{-1},\kappa)\in\De_{n+1}^\text{\ref{age nonzero}}$ define
\[c_\ga^* =  e_\xi^* + \frac{1}{m_j}\la e_\eta^* P_I^{(n)}.\]
Note that for $\ga\in\De_{n+1}^\text{\ref{age nonzero}}$, $c_\ga^*$ is of type \ref{age nonzero} from Proposition \ref{BD formula}.

Define $\De_{n+1} = \De_{n+1}^0\cup \De_{n+1}^\text{\ref{age zero}}\cup \De_{n+1}^\text{\ref{age nonzero}}$ and denote $\X = \mathfrak{X}_{(\Ga_n,i_n)}$.
\end{inst}

This time, the evaluation analysis of a $\ga\in\Ga$ can be supplemented with information about weights and metric distance. This information resembles the conditions imposed on $W_{^{C\>\!\!(\>\!\!K\>\!\!)}}$.

\begin{prop}
\label{X_K evaluation analysis}
Let $\ga\in\Ga$ have $\weight(\ga) = m_j^{-1}$ and $\age(\ga) = a\leq n_j$. Then, there exist
\begin{enumerate}[label=(\roman*),leftmargin=23pt]

\item $\xi_1,\ldots,\xi_a\in\Ga$ with $j\leq\rank(\xi_1)<\cdots<\rank(\xi_a)$ and $\xi_a = \ga$,

\item intervals $I_1$ of $[1,\rank(\xi_1))$ and $I_r$ of $(\rank(\xi_{r-1}),\rank(\xi_r))$, $r=2,\ldots,a$,

\item $\eta_r\in\Ga$ with $\rank(\eta_r) < \rank(\xi_r)$, for $1=2,\ldots,a$, and

\item $\la_r\in\D_{\rank(\xi_r)}$, $r=1,\ldots,a$,

\end{enumerate}
such that
\begin{equation}
\label{X_K evaluation analysis1}
e_\ga^* = \sum_{r=1}^a d_{\xi_r}^* + \frac{1}{m_j}\sum_{r=1}^a \la_r e_{\eta_r}^*\circ P_{I_r}
\end{equation}
and for $r=2,\ldots,a$,
\begin{align}
\label{X_K evaluation analysis1}\weight(I_r,\eta_r) &\leq 2^{-\rank(\xi_{r-1})},\\
\label{X_K evaluation analysis2}\varrho\big(\kappa(\xi_{r-1}),\kappa(\xi_r)\big) &\leq m_j^{-1}2^{-\rank(\xi_{r-1})},\text{ and}\\
\label{X_K evaluation analysis3}\varrho\big(\kappa(\eta_r),\kappa(\xi_r)\big) &\leq 2^{-\rank(\xi_{r-1})}
\end{align}
The representation \eqref{X_K evaluation analysis1} is called the {\em evaluation analysis of $\gamma$}.
\end{prop}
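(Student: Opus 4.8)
The plan is to follow the proof of Proposition \ref{BDmT evaluation analysis} essentially verbatim, arguing by induction on $a = \age(\ga)$ while additionally carrying along the metric and weight data that Instruction \ref{main def} encodes into the index sets $\De_{n+1}^\text{\ref{age nonzero},even}$ and $\De_{n+1}^\text{\ref{age nonzero},odd}$. Since $\ga\in\Ga$ has non-zero weight $m_j^{-1}$ it lies in $\De_{n+1}^\text{\ref{age zero}}$ when $a=1$ and in $\De_{n+1}^\text{\ref{age nonzero}}$ when $a>1$, where $n+1 = \rank(\ga)$; note $a\le n_j$ because the membership condition for $\bar\De_{n+1}^\text{\ref{age nonzero}}$ requires $\age(\xi)<n_j$. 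Because $\Ga$ is self-determined, the elements $\xi$ and $\eta$ occurring in the defining formula of $c_\ga^*$ already belong to $\Ga_n$, so $e_\xi^*$ and $e_\eta^*\circ P_I$ are meaningful on $\X$, and --- exactly as in \cite[Proposition 4.5]{argyros:haydon:2011} --- $e_\eta^*\circ P^{(n)}_I$ agrees with $e_\eta^*\circ P_I$ once viewed as a functional on the whole space. I would state this identification once and then suppress the superscript $(n)$.

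For the base case $a=1$ I write $e_\ga^* = d_\ga^* + c_\ga^*$ with $c_\ga^* = \frac{1}{m_j}\la e_\eta^*\circ P_I$ and set $\xi_1 = \ga$, $I_1 = I$, $\eta_1 = \eta$, $\la_1 = \la$; the rank inequalities $j\le n+1 = \rank(\xi_1)$, $\rank(\eta)<n+1$, and $\la\in\D_{n+1}$ are read off the definition of $\bar\De_{n+1}^\text{\ref{age zero}}$, and estimates \eqref{X_K evaluation analysis2} and \eqref{X_K evaluation analysis3} (together with the weight estimate) are vacuous. For the inductive step $a>1$ I have $\ga = (n+1,\xi,I,\eta,\la,m_j^{-1},\kappa)\in\De_{n+1}^\text{\ref{age nonzero}}$, so $c_\ga^* = e_\xi^* + \frac{1}{m_j}\la e_\eta^* P_I$ with $\xi\in\Ga_{n-1}$, $\weight(\xi) = m_j^{-1}$, $\age(\xi) = a-1$. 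Applying the inductive hypothesis to $\xi$ yields $\xi_1<\cdots<\xi_{a-1} = \xi$, intervals $I_1,\dots,I_{a-1}$, indices $\eta_1,\dots,\eta_{a-1}$, scalars $\la_1,\dots,\la_{a-1}$ with the corresponding analysis of $e_\xi^*$ (this is where $\weight(\xi)=m_j^{-1}$ is used, so that the inductive analysis has the same factor $\frac{1}{m_j}$). I then append $\xi_a = \ga$, $I_a = I$, $\eta_a = \eta$, $\la_a = \la$; substituting into $e_\ga^* = d_\ga^* + e_\xi^* + \frac{1}{m_j}\la e_\eta^* P_I$ and using $d_{\xi_a}^* = d_\ga^*$ gives the representation. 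The rank conditions follow by combining the inductive ones with $\rank(\xi)\le n-1<n+1 = \rank(\ga)$, and the interval conditions because $I$ is an interval of $(\rank(\xi),n+1) = (\rank(\xi_{a-1}),\rank(\xi_a))$.

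It then remains to verify the three estimates for $r=a$, the cases $r<a$ being inductive. Since $\kappa(\xi_{a-1}) = \kappa(\xi)$, $\kappa(\xi_a) = \kappa$, $\kappa(\eta_a) = \kappa(\eta)$ and $\rank(\xi_{a-1}) = \rank(\xi)$, the inequalities \eqref{X_K evaluation analysis2} and \eqref{X_K evaluation analysis3} are literally the metric constraints imposed in both $\De_{n+1}^\text{\ref{age nonzero},even}$ and $\De_{n+1}^\text{\ref{age nonzero},odd}$, so nothing more is needed there. The one non-transcription point is the weight estimate $\weight(I,\eta)\le 2^{-\rank(\xi)}$: in the even case this is again a defining condition of $\De_{n+1}^\text{\ref{age nonzero},even}$, but in the odd case the membership condition only records $\weight(\eta) = m_{4\sigma(\xi)}^{-1}$, so one invokes the defining growth property $m_{4\sigma(\xi)} > 2^{\rank(\xi)}$ of the coding function $\sigma$ to get $\weight(\eta)<2^{-\rank(\xi)}$; since $\weight(I,\eta)\in\{0,\weight(\eta)\}$ by \eqref{new weight}, the bound holds in either case. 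I expect this odd-weight check --- recovering in the odd case a bound that is imposed outright in the even case --- to be the only genuinely non-automatic step; the rest is a direct adaptation of the Argyros--Haydon evaluation-analysis argument and of the proof of Proposition \ref{BDmT evaluation analysis}.
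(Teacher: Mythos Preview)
Your proof is correct and follows essentially the same inductive approach as the paper's own proof. In fact you are more careful than the paper on one point: the paper simply invokes ``the definition of $\De_{n+1}^\text{\ref{age nonzero}}$'' for all three estimates, whereas you correctly observe that in the odd-weight case the bound $\weight(I,\eta)\le 2^{-\rank(\xi)}$ is not a membership condition but must be recovered from the growth property $m_{4\sigma(\xi)}>2^{\rank(\xi)}$ of the coding function.
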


\begin{proof}
This is proved by induction on $\age(\ga) = a$. If $\age(\ga) = 1$, put $\xi_1 = \ga$ and write $c_\ga^* = m_j^{-1}\la_1 e_{\eta_1}^*\circ P_{I_1}$. Therefore one has  $e_\ga^* = d_\ga^* + c_\ga^* = d_{\xi_1}^* + m_j^{-1}\la_1e_{\eta_1}^*\circ P_{I_1}$. If the statement holds for all $\ga\in\Ga$ with $\age(\ga) = a$, let $\ga\in\Ga$ with $\age(\ga) = a+1$. Put $\xi_{a+1} = \ga$ and write $c_\ga^* = e^*_{\xi_{a}} + m_j^{-1}\la_{a+1}e_{\eta_{a+1}}^*$. By the definition of $\De_{n+1}^\text{\ref{age nonzero}}$,
\begin{align*}
\weight(I_{a+1},\ga_{a+1}) &\leq 2^{-\rank(\xi_a)},\\
\varrho(\kappa(\xi_{a}),\kappa(\xi_{a+1})) &\leq m_j^{-1}2^{\rank(\xi_a)},\text{ and}\\
\varrho\big(\kappa(\eta_{a+1}),\kappa(\xi_{a+1})\big) &\leq 2^{-\rank(\xi_a)}.
\end{align*}
Write $e_\ga^* = d_{\xi_{a+1}}^* + c_{\xi_{a+1}}^* = d_{\xi_{a+1}}^* + m_j^{-1}\la_{a+1}e_{\eta_{a+1}}^* + e^*_{\xi_{a}}$ and recover the remaining information from the inductive hypothesis applied to $e^*_{\xi_a}$.
\end{proof}

Similar to $W_{^{C\>\!\!(\>\!\!K\>\!\!)}}$, the set $\Ga$ is in a certain sense closed under the $(m_{2j}^{-1},\mathcal{A}_{n_{2j}})$-operations applied to sequences $(\la_re^*_{\eta_r}\circ P_{I_r})_r$ that satisfy a condition similar to having essentially rapidly converging supports.

\begin{prop}
\label{gamma builder}
Let $j\in\N$, $(q_r)_{r=1}^{n_{2j}}$ be a strictly sequence of natural numbers, $(I_r)_{r=1}^{n_{2j}}$ be finite intervals of $\N$, $\eta_r\in\Ga$ and $\la_r\in\D_{q_r}$, for $r=1,\ldots,n_{2j}$, and $\kappa_0\in K$. Assume that the following are satisfied.
\begin{enumerate}[leftmargin=23pt,label=(\roman*)]

\item $2j\leq q_1$, $I_1\subset[1,q_1)$, and $I_r\subset (q_{r-1},q_r)$ , for $r=2,\ldots,n_{2j}$.

\item $\rank(\eta_r)<q_r$, for $r=1,\ldots,n_{2j}$,

\item $\weight(I_r,\eta_r) \leq 2^{-q_{r-1}}$, for $r=2,\ldots,n_{2j}$.

\item $\varrho(\kappa(\eta_k),\kappa_0) \leq 2^{-(q_{r-1}+1)}$, for $r=2,\ldots,n_{2j}$.

\end{enumerate}
Then, there exists $\ga\in\Ga$ with $\weight(\ga) = m_{2j}^{-1}$, $\rank(\ga) = q_{n_{2j}}$, and $\varrho(\kappa(\ga),\kappa_0) \leq 2^{-q_{n_{2j}}}$ such that $e_\ga^*$ has an evaluation analysis
\[e_\ga^* = \sum_{r=1}^{n_{2j}}d_{\xi_r}^* + \frac{1}{m_{2j}}\sum_{r=1}^{n_{2j}}\la_r e_{\eta_r}^*\circ P_{I_r},\]
where $\rank(\xi_r) = q_{r}$, for $r=1,\ldots,n_{2j}$.
\end{prop}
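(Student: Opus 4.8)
The plan is to construct $\ga$ directly by iterating the instructions in Instruction \ref{main def}, mimicking the inductive construction of the evaluation analysis in Proposition \ref{X_K evaluation analysis} but run forwards rather than backwards. Since $K_n$ is an $m_n^{-1}2^{-(n+1)}$-net of $K$, one first chooses, for each $r=1,\dots,n_{2j}$, a point $\kappa_r\in K_{q_r}$ with $\varrho(\kappa_r,\kappa_0)\le m_{q_r}^{-1}2^{-(q_r+1)}\le 2^{-(q_r+1)}$; these will serve as the last coordinates of the successive $\xi_r$. Then one builds $\xi_1\in\De_{q_1}$ as an element of $\De_{q_1}^{\text{\ref{age zero}}}$ (even or odd according to parity of $j$, say $j$ even to fix ideas) of the form $(q_1,I_1,\eta_1,\la_1,m_{2j}^{-1},\kappa_1)$; this is legitimate provided $\eta_1\in\Ga_{q_1-1}$, which follows from $\rank(\eta_1)<q_1$, and provided $I_1\subset[1,q_1)$, which is hypothesis (i). In the odd case one additionally needs $\weight(\eta_1)=m_{4i-2}^{-1}$ for some $i$; this is exactly the first clause in the definition of a special sequence, and here it should either be added as a hypothesis or the statement restricted to the even case — I would check which of these the author intends, since the displayed hypotheses only mention $2j$.

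Next, having constructed $\xi_1,\dots,\xi_{r-1}$ with $\rank(\xi_s)=q_s$ and $\kappa(\xi_s)=\kappa_s$, one forms $\xi_r\in\De_{q_r}^{\text{\ref{age nonzero}}}$ of the form $(q_r,\xi_{r-1},I_r,\eta_r,\la_r,m_{2j}^{-1},\kappa_r)$. The membership conditions to verify are: $\xi_{r-1}\in\Ga_{q_r-1}$ (true since $\rank(\xi_{r-1})=q_{r-1}<q_r$), $\age(\xi_{r-1})<n_{2j}$ (true since $\age(\xi_{r-1})=r-1\le n_{2j}-1$), $\eta_r\in\Ga_{q_r-1}$ (from $\rank(\eta_r)<q_r$), $I_r$ an interval of $(\rank(\xi_{r-1})+1,\dots,q_r-1)=(q_{r-1},q_r)$ which is hypothesis (i), the weight bound $\weight(I_r,\eta_r)\le 2^{-\rank(\xi_{r-1})}=2^{-q_{r-1}}$ which is hypothesis (iii), and the two metric bounds $\varrho(\kappa(\xi_{r-1}),\kappa_r)\le m_{2j}^{-1}2^{-\rank(\xi_{r-1})}=m_{2j}^{-1}2^{-q_{r-1}}$ and $\varrho(\kappa(\eta_r),\kappa_r)\le 2^{-q_{r-1}}$. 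For the first, $\varrho(\kappa_{r-1},\kappa_r)\le\varrho(\kappa_{r-1},\kappa_0)+\varrho(\kappa_0,\kappa_r)\le m_{q_{r-1}}^{-1}2^{-(q_{r-1}+1)}+m_{q_r}^{-1}2^{-(q_r+1)}$, and one uses $q_{r-1}\ge 2j$ together with $m_{q_{r-1}}\ge m_{2j}$ and crude summing to get this below $m_{2j}^{-1}2^{-q_{r-1}}$. For the second, $\varrho(\kappa(\eta_r),\kappa_r)\le\varrho(\kappa(\eta_r),\kappa_0)+\varrho(\kappa_0,\kappa_r)\le 2^{-(q_{r-1}+1)}+2^{-(q_r+1)}\le 2^{-q_{r-1}}$ using hypothesis (iv). One sets $\ga=\xi_{n_{2j}}$; by construction $\rank(\ga)=q_{n_{2j}}$, $\weight(\ga)=m_{2j}^{-1}$, $\age(\ga)=n_{2j}\le n_{2j}$, and $\varrho(\kappa(\ga),\kappa_0)=\varrho(\kappa_{n_{2j}},\kappa_0)\le m_{q_{n_{2j}}}^{-1}2^{-(q_{n_{2j}}+1)}\le 2^{-q_{n_{2j}}}$.

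Finally, the evaluation analysis is read off from Proposition \ref{X_K evaluation analysis}: since $\xi_r$ was built using precisely the data $(I_r,\eta_r,\la_r)$ and chained to $\xi_{r-1}$, unravelling $e_\ga^* = d_{\xi_{n_{2j}}}^*+c_{\xi_{n_{2j}}}^* = d_{\xi_{n_{2j}}}^*+e_{\xi_{n_{2j}-1}}^*+m_{2j}^{-1}\la_{n_{2j}}e_{\eta_{n_{2j}}}^*\circ P_{I_{n_{2j}}}$ and iterating gives exactly $e_\ga^*=\sum_{r=1}^{n_{2j}}d_{\xi_r}^*+\frac{1}{m_{2j}}\sum_{r=1}^{n_{2j}}\la_r e_{\eta_r}^*\circ P_{I_r}$ with $\rank(\xi_r)=q_r$, as claimed. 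I expect the only genuinely delicate point to be the bookkeeping with the change of metric $\varrho$ and the nets $K_n$: one must pick the intermediate points $\kappa_r$ with enough room that all three metric inequalities in the definition of $\De^{\text{\ref{age nonzero}}}_{n+1}$ hold simultaneously, and this forces the net mesh $m_n^{-1}2^{-(n+1)}$ and the constant $2^{-(q_{r-1}+1)}$ in hypothesis (iv) to be exactly calibrated; everything else is routine unravelling of the instructions. A secondary point worth flagging in the write-up is the odd-$j$ case and whether the special-sequence constraint on $\weight(\eta_1)$ needs to appear among the hypotheses.
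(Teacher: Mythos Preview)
Your proof is correct and follows essentially the same approach as the paper's: pick $\kappa_r\in K_{q_r}$ close to $\kappa_0$ using the net property, then inductively build $\xi_1,\ldots,\xi_{n_{2j}}=\ga$ verifying the membership conditions for $\De_{q_r}^{\text{\ref{age zero},even}}$ and $\De_{q_r}^{\text{\ref{age nonzero},even}}$; the evaluation analysis then drops out. Two small points: first, the index in the weight is $2j$, which is always even regardless of $j$, so there is no ``odd-$j$ case'' to worry about and your concern about the special-sequence constraint on $\weight(\eta_1)$ does not arise here (the paper handles odd weights separately in the remark following this proposition); second, the membership condition for $\De_{q_r}^{\text{\ref{age nonzero}}}$ actually requires $\xi_{r-1}\in\Ga_{q_r-2}$ rather than $\Ga_{q_r-1}$, so strictly speaking one needs $q_r\geq q_{r-1}+2$, though the paper's proof also does not flag this and in every application the $q_r$ are spaced far enough apart.
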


\begin{proof}
Recall that $K_{q_r}$ is a $m_{q_r}^{-1}2^{-(q_r+1)}$-net of $K$ and thus it is possible to pick $\kappa_r\in K_{q_r}$ with $\varrho(\kappa_r,\kappa_0)\leq m_{q_r}^{-1}2^{-(q_r + 1)}$. In particular, for $r=2,\ldots,n_{2j}$, $\varrho(\kappa_r,\kappa_{r-1})\leq m_{2j}^{-1}2^{-q_{r-1}}$. Next, choose inductively $\xi_1,\ldots,\xi_{n_{2j}} = \ga$ that satisfy the following properties.
\begin{enumerate}[label=(\alph*),leftmargin=19pt]

\item $\rank(\xi_r) = q_r$, for $r=1,\ldots,n_{2j-1}$

\item $\kappa(\xi_r) = \kappa_r$, for $r=1,\ldots,n_{2j-1}$

\item $c_{\xi_1}^* = m_{2j}^{-1}\la_1 e_{\ga_1}^*\circ P_{I_1}$ and $c_{\xi_r}^* = e_{\xi_{r-1}}^* + m_{2j}^{-1}\la_r e_{\eta_r}\circ P_{I_r}^*$, for $r=2,\ldots,n_{2j-1}$.

\end{enumerate}
Note that $\xi_1 = (q_1,I_1,\eta_1,\la_1,m_{2j}^{-1},\kappa_1)$ meets all the required conditions to be in $\De_{q_1}^{\text{\ref{age zero},even}}$. If, for some $2\leq n_{2j-1}$, $\xi_{r-1}$ has been chosen then, $\varrho(\kappa(\eta_r),\kappa_r) \leq \varrho(\kappa(\eta_r),\kappa_0) + \varrho(\kappa_r,\kappa_0)\leq  2^{-q_{r-1}}$. Therefore, $\xi_r = (q_r,\xi_{r-1},I_r,\eta_r,\la_r,m_{2j}^{-1},\kappa_r)$ is in $\De_{q_r}^{\text{\ref{age nonzero},even}}$. It is the straightforward to check that $\ga = \xi_{n_{2j}}$ has the desired weight and proximity to $\kappa_0$. The fact that it also has the desired evaluation analysis follows from the proof of Proposition \ref{X_K evaluation analysis}.
\end{proof}

\begin{rem}
A similar process can be carried out to construct $\ga$ of odd weight $m_{2j}^{-1}$. The difference is that in each step the resulting $\xi_r$ determines, via the coding function $\sigma$, the weight $m^{-1}_{4\sigma(\xi_r)}$ that ${\eta_{r+1}}$ is allowed to have.
\end{rem}

\section{Diagonal operators on $\X$ and their image in $\mathpzc{Cal}(\X)$}
\label{diagonal boundedness section}
For every continuous function $\phi:K\to\C$ denote by $\hat \phi$ the linear operator on $\langle\{d_\ga:\ga\in\Ga\}\rangle$ given by $\hat\phi(d_\ga) = \phi(\kappa(\ga))d_\ga$. In this section it will be proved that whenever $\phi$ is Lipschitz then $\hat \phi$ extends to a bounded linear operator on $\X$. The derived estimates yield a natural Banach algebra embedding $\Psi: C(K)\to\mathpzc{Cal}(\X)$. At the end of this section it will be shown that $\X$ admits an equivalent norm that turns this into a homomorphic isometric embedding. All the results of this section deal with the ``unconditional'' structure of $\X$, i.e., the special properties of odd-weight functionals are not used. These will be necessary further down the road when it will be established that $\Psi$ is onto as well.

For $\phi$ as above also define $\hat\phi$ on $\langle\{d_\ga^*:\ga\in\Ga\}\rangle\subset \X^*$ using the same formula, i.e., for all $\ga\in\Ga$, $\hat\phi(d^*_\ga) = \phi(\kappa(\ga))d^*_\ga$. Under this notation, for all $x\in\langle\{d_\ga:\ga\in\Ga\}\rangle$ and $f\in \langle\{d_\ga^*:\ga\in\Ga\}\rangle$, $f(\hat \phi x) = (\hat\phi f)(x)$. That is, $\hat\phi$ is its own dual. Note that for every $\ga\in\Ga$ and interval $I$ of $\N$, $e_\ga^*\circ P_I$ is in the linear span of $\{d_{\ga'}^*:\ga'\in\Ga\}$ (see, e.g., \cite[Proposition 2.17 (ii), page 690]{argyros:gasparis:motakis:2016}). Therefore, $\hat \phi(e_\ga^*\circ P_I)$ is always well defined.

\subsection{Boundedness of diagonal operators} The following statement is analogous to Proposition \ref{mT diagonal bounded precise}. The additional precision will be required later in this section to define a renorming of $\X$.

\begin{prop}
\label{lip diag bounded}
For every $L,M\geq 0$ there exists $N\in\N$ with the following property. For every $\phi:K\to\C$ that is $L$-Lipschitz with $\|\phi\|_\infty\leq M$, for every $\ga\in\Ga$, and for every interval $I$ of $\N$ with $\min(I) \geq N$,
\begin{equation}
\label{lip diag bounded1}
\Big\|\hat \phi\big(e^*_\ga\circ P_I\big) - \phi\big(\kappa(\ga)\big)e_\ga^* \circ P_I\Big\| \leq 7\weight(I,\ga)M.
\end{equation}
In particular, $\hat\phi:\X\to\X$ is bounded and $\big\|\hat \phi -  P_{[1,N)} \hat \phi\big\| \leq 4M$.
\end{prop}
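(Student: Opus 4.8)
The plan is to prove \eqref{lip diag bounded1} by induction on $\age(\ga)$, exactly mirroring the proof of Proposition \ref{mT diagonal bounded precise} but now using the Bourgain-Delbaen evaluation analysis of Proposition \ref{X_K evaluation analysis} in place of the direct $(m_j^{-1},\mathcal{A}_{n_j})$-structure of $W_{^{C(K)}}$. First I would choose $N$ large enough (depending only on $L$ and $M$) so that a tail estimate of the form $(7+L)2^{-(N-1)} \leq$ (something comfortably small, like $5/4$ after scaling) holds; the precise constant is dictated by wanting the final arithmetic to close with the factor $7$ on the right-hand side. Throughout I would normalise to $M = 1$, so $\|\phi\|_\infty \leq 1$ and $\phi$ is $L$-Lipschitz, and I would freely use that $\|d_\ga^*\| \leq 3$, $\|d_\ga\|\leq 2$, and $\|P_I\| \leq 3$ from Remark \ref{component norms}.

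\textbf{Base case and the weight-zero case.} If $\weight(I,\ga) = 0$ there is nothing to prove once I observe that either $\rank(\ga) \leq \min(I)$, in which case $e_\ga^* \circ P_I = d_\ga^*$ (as noted in the discussion after \eqref{new weight}) and $\hat\phi(d_\ga^*) = \phi(\kappa(\ga))d_\ga^*$ by definition, or $\rank(\ga) < \min(I)$ and $e_\ga^*\circ P_I = 0$. In particular the base case $\age(\ga) = 1$ where $\weight(\ga) = m_j^{-1}$ still requires work: here $e_\ga^* = d_{\xi_1}^* + m_j^{-1}\la_1 e_{\eta_1}^*\circ P_{I_1}$, and composing with $P_I$ and applying $\hat\phi$, I would split $\hat\phi(e_\ga^*\circ P_I) - \phi(\kappa(\ga)) e_\ga^*\circ P_I$ into the $d_{\xi_1}^*$ term (which $\hat\phi$ scales exactly by $\phi(\kappa(\xi_1)) = \phi(\kappa(\ga))$, since $\xi_1 = \ga$) and the weighted term, whose norm is at most $m_j^{-1}\cdot 2\cdot 3 \cdot 1$ — needing a slightly sharper bookkeeping than $6$ to land at $7\weight(\ga)$, so in fact I would estimate more carefully using that $\hat\phi$ also has norm controlled on the relevant finite-dimensional piece.

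\textbf{Inductive step.} For $\age(\ga) = a+1$ with evaluation analysis $e_\ga^* = \sum_{r=1}^{a+1} d_{\xi_r}^* + \tfrac{1}{m_j}\sum_{r=1}^{a+1}\la_r e_{\eta_r}^*\circ P_{I_r}$, I would compose with $P_I$ (which truncates the sum to those $r$ with $I_r \cap I \neq \emptyset$, at worst modifying one interval $I_r$ into $I_r \cap I$) and then write
\[
\hat\phi(e_\ga^*\circ P_I) - \phi(\kappa(\ga))\,e_\ga^*\circ P_I = \sum_r \big(\phi(\kappa(\xi_r)) - \phi(\kappa(\ga))\big) d_{\xi_r}^* + \frac{1}{m_j}\sum_r \la_r\big(\hat\phi(e_{\eta_r}^*\circ P_{I_r\cap I}) - \phi(\kappa(\eta_r)) e_{\eta_r}^*\circ P_{I_r\cap I}\big) + \frac{1}{m_j}\sum_r \la_r\big(\phi(\kappa(\eta_r)) - \phi(\kappa(\ga))\big) e_{\eta_r}^*\circ P_{I_r\cap I},
\]
using that $\hat\phi$ is diagonal so $\hat\phi(d_{\xi_r}^*) = \phi(\kappa(\xi_r))d_{\xi_r}^*$. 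The first sum is handled by telescoping $\rho(\kappa(\xi_r),\kappa(\ga)) \leq \sum_{s>r}\rho(\kappa(\xi_{s-1}),\kappa(\xi_s)) \leq m_j^{-1}\sum_{s}2^{-\rank(\xi_{s-1})}$ from \eqref{X_K evaluation analysis2}, giving a geometrically summable $L$-Lipschitz contribution times $\|d_{\xi_r}^*\|\leq 3$; the middle sum is controlled termwise by the inductive hypothesis applied to each $\eta_r$ (whose age is smaller) with interval $I_r\cap I$, yielding $\tfrac{1}{m_j}\sum_r 7\weight(I_r\cap I,\eta_r) \leq \tfrac{7}{m_j}\sum_r 2^{-\rank(\xi_{r-1})}$ by \eqref{X_K evaluation analysis1} (noting $\weight(I_r\cap I,\eta_r)\leq\weight(I_r,\eta_r)$); and the last sum uses \eqref{X_K evaluation analysis3} together with the triangle inequality to bound $\rho(\kappa(\eta_r),\kappa(\ga))$ by $2^{-\rank(\xi_{r-1})} + m_j^{-1}2^{-\rank(\xi_{r-1})} \cdot(\text{tail})$, again $L$-Lipschitz times $\|e_{\eta_r}^*\circ P_{I_r\cap I}\| \leq 3$. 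Summing the three geometric series and using $\min(I)\geq N$ to make the tails small enough relative to the leading constant (which comes from the $r=1$ or a single ``boundary'' term where the analysis is coarsest), everything collapses to $\leq m_j^{-1}\cdot 7 \cdot (\text{weight factor}) = 7\weight(I,\ga)$; since the whole estimate was multiplied through by $M$, this is \eqref{lip diag bounded1}.

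\textbf{The consequence.} For the ``in particular'' clause: given $x \in \X$ with $\|x\|\leq 1$, apply any norming functional; since $\X \subset \ell_\infty(\Ga)$, for each $\ga\in\Ga$ the value $(\hat\phi x - P_{[1,N)}\hat\phi x)(\ga) = e_\ga^*(\hat\phi x) - e_\ga^*(P_{[1,N)}\hat\phi x)$, and I would rewrite $e_\ga^* \circ (I - P_{[1,N)})\hat\phi = \hat\phi \circ e_\ga^*\circ P_{[N,\infty)} - \phi(\kappa(\ga)) e_\ga^*\circ P_{[N,\infty)} + \phi(\kappa(\ga)) e_\ga^*\circ P_{[N,\infty)} \circ(\text{stuff})$ — more cleanly, observe $\hat\phi - P_{[1,N)}\hat\phi$ acts on $x$ coordinatewise through functionals $e_\ga^*\circ P_{[N,\infty)}$, and \eqref{lip diag bounded1} with $I = [N,\infty)$ (or rather $[N, \max\supp]$ intersected appropriately) gives $\|\hat\phi(e_\ga^*\circ P_{[N,\infty)}) - \phi(\kappa(\ga)) e_\ga^*\circ P_{[N,\infty)}\| \leq 7\weight([N,\infty),\ga) M \leq 7 m_1^{-1} M \leq M$ since $m_1 \geq 8$, while the scalar term contributes $\|\phi(\kappa(\ga)) e_\ga^*\circ P_{[N,\infty)} x\| \leq M\|P_{[N,\infty)}\| \leq 3M$; combining and taking the supremum over $\ga$ gives $\|\hat\phi x - P_{[1,N)}\hat\phi x\|_\infty \leq 4M$, i.e., $\|\hat\phi - P_{[1,N)}\hat\phi\| \leq 4M$. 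Boundedness of $\hat\phi$ on all of $\X$ then follows because $P_{[1,N)}$ is a bounded finite-rank projection and $P_{[1,N)}\hat\phi$ is bounded on the finite-dimensional range, so $\hat\phi = P_{[1,N)}\hat\phi + (\hat\phi - P_{[1,N)}\hat\phi)$ is a sum of two bounded operators; density of $\langle\{d_\ga\}\rangle$ lets us extend.

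\textbf{Main obstacle.} I expect the delicate point to be the precise constant-chasing: ensuring the various factors of $2$ and $3$ coming from $\|d_\ga\|$, $\|d_\ga^*\|$, $\|P_I\|\leq 3$, together with the three geometric series and the single coarse boundary term, genuinely sum to at most $7\weight(I,\ga)M$ and not something larger, while keeping $N$ depending only on $L,M$. A secondary subtlety is the correct treatment of how $P_I$ interacts with the evaluation analysis — specifically that composing $e_\ga^*\circ P_{I_r}$ with $P_I$ yields $e_\ga^*\circ P_{I_r\cap I}$ (up to the biorthogonality identities for Bourgain-Delbaen functionals from \cite{argyros:gasparis:motakis:2016}) and that $\weight(I_r\cap I,\eta_r)\leq\weight(I_r,\eta_r)$ so the inductive hypothesis still applies cleanly — but this is bookkeeping rather than a conceptual hurdle, since the underlying mechanism is identical to the mixed-Tsirelson computation in Proposition \ref{mT diagonal bounded precise}.
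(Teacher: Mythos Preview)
Your overall strategy --- expand via the evaluation analysis, split into three terms (the $d_{\xi_r}^*$ contribution, the recursive contribution, and the $\phi(\kappa(\eta_r))-\phi(\kappa(\ga))$ contribution), and control each with the geometric estimates \eqref{X_K evaluation analysis1}--\eqref{X_K evaluation analysis3} --- is exactly what the paper does, and your derivation of the ``in particular'' clause is essentially the paper's as well. There is, however, a genuine gap in your induction scheme.

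\textbf{The induction variable is wrong.} You propose induction on $\age(\ga)$ and assert that the inductive hypothesis applies to each $\eta_r$ ``whose age is smaller''. This is false: in the evaluation analysis of Proposition~\ref{X_K evaluation analysis}, the $\xi_r$ are the age chain of $\ga$, but the $\eta_r$ are arbitrary elements of $\Ga$ subject only to $\rank(\eta_r)<\rank(\xi_r)$ and the weight/metric constraints. There is no relation whatsoever between $\age(\eta_r)$ and $\age(\ga)$; indeed $\eta_r$ may have age much larger than $a$. This is already visible in your base case $\age(\ga)=1$: to bound $\hat\phi(e_{\eta_1}^*\circ P_{I_1\cap I}) - \phi(\kappa(\eta_1))e_{\eta_1}^*\circ P_{I_1\cap I}$ you need the full statement for $\eta_1$, which your induction does not provide. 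Your remark that this term needs ``slightly sharper bookkeeping than $6$'' is a symptom of this --- no amount of bookkeeping will close it without recursing into $\eta_1$.

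The fix is to induct on $\rank(\ga)$ instead, which is what the paper does. Since $\rank(\eta_r)<\rank(\xi_r)\leq\rank(\ga)$ for every $r$, the hypothesis then applies cleanly to each $e_{\eta_r}^*\circ P_{I_r\cap I}$. The base case $\rank(\ga)=1$ is covered entirely by the weight-zero argument (all $\ga\in\De_1$ have weight zero), so no separate age-one analysis is needed. With this single change your three-term split and the telescoping bounds go through; the paper packages those telescoping sums as a separate Lemma (yielding $\sum\weight(I_r,\eta_r)\leq 2\cdot 2^{-\rank(\xi_{r_0})}$, $\sum\varrho(\kappa(\xi_r),\kappa(\ga))\leq 4m_j^{-1}2^{-\rank(\xi_{r_0})}$, and $\sum\varrho(\kappa(\eta_r),\kappa(\ga))\leq 3\cdot 2^{-\rank(\xi_{r_0})}$), but that is presentation rather than substance.
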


The proof comes down to taking the evaluation analysis of a given $\ga$ and applying an inductive hypothesis to its components, just like in the proof of Proposition \ref{mT diagonal bounded precise}. For the sake of tidiness, some computations have been isolated and gathered in the following.

\begin{lem}
\label{telescoping}
Let $\ga\in\Ga$ have $\weight(\ga) = m_j^{-1}$ and $\age(\ga) = a>1$. Following the notation of Proposition \ref{X_K evaluation analysis}, for every $1\leq r_0<a$,
\begin{align}
\label{telescoping1}\sum_{r=r_0+1}^a\weight(I_r,\eta_r) &\leq 2\cdot 2^{-\rank(\xi_{r_0})},\\
\label{telescoping2}\sum_{r=r_0}^{a}\varrho\big(\kappa(\xi_{r}),\kappa(\ga)\big) &\leq \frac{4}{m_j}2^{-\rank(\xi_{r_0})},\text{ and}\\
\label{telescoping3}\sum_{r=r_0+1}^a\varrho\big(\kappa(\eta_{r}),\kappa(\ga)\big) &\leq 3\cdot2^{-\rank(\xi_{r_0})}.
\end{align}
\end{lem}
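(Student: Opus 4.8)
The plan is to prove the three estimates by exploiting the telescoping nature of the geometric bounds coming from Proposition~\ref{X_K evaluation analysis}, specifically inequalities \eqref{X_K evaluation analysis1}, \eqref{X_K evaluation analysis2}, \eqref{X_K evaluation analysis3}, together with the crucial rank-growth property $\rank(\xi_{r_0}) < \rank(\xi_{r_0+1}) < \cdots < \rank(\xi_{a-1})$. The key observation is that each of the three sums has summands controlled by $2^{-\rank(\xi_{r-1})}$ (up to a factor of $m_j^{-1}\leq 1/8$ in the second), and since the ranks $\rank(\xi_{r-1})$ for $r = r_0+1,\dots,a$ form a strictly increasing sequence of integers all at least $\rank(\xi_{r_0})$, we may dominate the sum by the geometric series $\sum_{k\geq \rank(\xi_{r_0})} 2^{-k} = 2\cdot 2^{-\rank(\xi_{r_0})}$. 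So the whole proof is essentially three applications of the same one-line geometric estimate.

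For \eqref{telescoping1}: by \eqref{X_K evaluation analysis1}, $\weight(I_r,\eta_r) \leq 2^{-\rank(\xi_{r-1})}$ for each $r = r_0+1,\dots,a$, and since $r \mapsto \rank(\xi_{r-1})$ is strictly increasing on this range with $\rank(\xi_{r_0})$ its smallest value, $\sum_{r=r_0+1}^a \weight(I_r,\eta_r) \leq \sum_{k\geq \rank(\xi_{r_0})} 2^{-k} = 2\cdot 2^{-\rank(\xi_{r_0})}$, which is exactly \eqref{telescoping1}. For \eqref{telescoping3}: by \eqref{X_K evaluation analysis3}, $\varrho(\kappa(\eta_r),\kappa(\xi_r)) \leq 2^{-\rank(\xi_{r-1})}$; I would first bound $\varrho(\kappa(\xi_r),\kappa(\ga))$ and then use the triangle inequality $\varrho(\kappa(\eta_r),\kappa(\ga)) \leq \varrho(\kappa(\eta_r),\kappa(\xi_r)) + \varrho(\kappa(\xi_r),\kappa(\ga))$. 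Summing the first terms gives $\leq 2\cdot 2^{-\rank(\xi_{r_0})}$ as above; summing the second terms is controlled by \eqref{telescoping2}, which contributes at most $(4/m_j)\cdot 2^{-\rank(\xi_{r_0})} \leq (1/2)\cdot 2^{-\rank(\xi_{r_0})}$ since $m_j \geq m_1 \geq 8$. The sum $2 + 1/2 < 3$ gives \eqref{telescoping3}.

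The slightly more delicate one is \eqref{telescoping2}, because there $\varrho(\kappa(\xi_r),\kappa(\ga))$ is not directly one of the hypotheses; only the consecutive distances $\varrho(\kappa(\xi_{r-1}),\kappa(\xi_r)) \leq m_j^{-1} 2^{-\rank(\xi_{r-1})}$ from \eqref{X_K evaluation analysis2} are. The plan is to write, for each $r$ with $r_0 \leq r < a$, $\varrho(\kappa(\xi_r),\kappa(\ga)) = \varrho(\kappa(\xi_r),\kappa(\xi_a)) \leq \sum_{s=r}^{a-1}\varrho(\kappa(\xi_s),\kappa(\xi_{s+1})) \leq m_j^{-1}\sum_{s=r}^{a-1} 2^{-\rank(\xi_s)} \leq m_j^{-1}\cdot 2\cdot 2^{-\rank(\xi_r)}$, using the geometric estimate with the strictly increasing ranks $\rank(\xi_s)$, $s = r,\dots,a-1$. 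Then summing over $r = r_0,\dots,a$ (the $r=a$ term being zero), $\sum_{r=r_0}^a \varrho(\kappa(\xi_r),\kappa(\ga)) \leq (2/m_j)\sum_{r=r_0}^{a-1} 2^{-\rank(\xi_r)} \leq (2/m_j)\cdot 2\cdot 2^{-\rank(\xi_{r_0})} = (4/m_j)2^{-\rank(\xi_{r_0})}$, which is \eqref{telescoping2}. The main obstacle, such as it is, is just keeping track of which index ranges the strict rank-monotonicity applies to and being careful that the outer sum over $r$ and the inner sum over $s$ in \eqref{telescoping2} each get their own factor of $2$; I should be slightly cautious that the double-geometric-sum estimate $\sum_r \sum_{s\geq r} 2^{-\rank(\xi_s)} \leq 4\cdot 2^{-\rank(\xi_{r_0})}$ uses monotonicity of $r\mapsto\rank(\xi_r)$ in both layers. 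There is no deep difficulty here — it is a bookkeeping lemma isolated precisely so that the proof of Proposition~\ref{lip diag bounded} is not cluttered.
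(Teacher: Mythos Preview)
Your proof is correct and follows essentially the same approach as the paper: geometric summation of $2^{-\rank(\xi_{r-1})}$ for \eqref{telescoping1}, a double geometric sum via the triangle inequality along $\xi_r\to\xi_{r+1}\to\cdots\to\xi_a=\ga$ for \eqref{telescoping2}, and the triangle inequality $\varrho(\kappa(\eta_r),\kappa(\ga))\leq\varrho(\kappa(\eta_r),\kappa(\xi_r))+\varrho(\kappa(\xi_r),\kappa(\ga))$ combined with \eqref{telescoping2} for \eqref{telescoping3}. The only cosmetic difference is in the bookkeeping of the constant in \eqref{telescoping3}: you bound the second sum by $(4/m_j)2^{-\rank(\xi_{r_0})}\leq (1/2)2^{-\rank(\xi_{r_0})}$, whereas the paper applies \eqref{telescoping2} starting at $r_0+1$ to get the slightly sharper $(4/m_j)2^{-\rank(\xi_{r_0+1})}$ before bounding; both comfortably give the total $\leq 3\cdot 2^{-\rank(\xi_{r_0})}$.
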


\begin{proof}
For \eqref{telescoping1}, use \eqref{X_K evaluation analysis1}.
\begin{equation*}
\sum_{r=r_0+1}^a\weight(I_r,\eta_r) \leq \sum_{r = r_0+1}^a2^{-\rank(\xi_{r-1})} \leq \sum_{s=0}^\infty2^{-(\rank(\xi_{r_0})+s)} = 2\cdot2^{-\rank(\xi_{r_0})}.
\end{equation*}
Next, to prove \eqref{telescoping2}, \eqref{X_K evaluation analysis2} is used.
\begin{align*}
\sum_{r=r_0}^{a}\varrho\big(\kappa(\xi_{r}),\kappa(\ga)\big) &=\sum_{r=r_0}^{a-1}\varrho\big(\kappa(\xi_{r}),\kappa(\ga)\big) \leq \sum_{r=r_0}^{a-1}\sum_{s=r+1}^a\varrho\big(\kappa(\xi_{s-1}),\kappa(\xi_s)\big)\\
& \stackrel{\eqref{X_K evaluation analysis2}}{\leq}\frac{1}{m_j}\sum_{r=r_0}^{a-1}\sum_{s=r+1}^a2^{-\rank(\xi_{s-1})} {\leq}\frac{1}{m_j}\sum_{r=r_0}^{a-1}\sum_{s=0}^\infty 2^{-(\rank(\xi_r)+s)}\\
&\leq \frac{2}{m_j}\sum_{r=r_0}^{a-1}2^{-\rank(\xi_r)} \leq  \frac{4}{m_j} \cdot2^{-\rank(\xi_{r_0})}.
\end{align*}
Finally, to obtain \eqref{telescoping3}, use \eqref{X_K evaluation analysis3}.
\begin{align*}
\sum_{r=r_0+1}^a\varrho\big(\kappa(\eta_{r}),\kappa(\ga)\big) & \leq \sum_{r=r_0+1}^a\varrho\big(\kappa(\eta_{r}),\kappa(\xi_r)\big) + \sum_{r=r_0+1}^a\varrho\big(\kappa(\xi_{r}),\kappa(\ga)\big)\\
&\!\!\!\!\!\!\stackrel{\eqref{X_K evaluation analysis3}\text{\&}\eqref{telescoping2}}{\leq}\sum_{r=r_0+1}^a2^{-\rank(\xi_{r-1})} + \frac{4}{m_j}\cdot2^{-\rank(\xi_{r_0+1})}\\
&\leq 2\cdot2^{-\rank(\xi_{r_0})} + \frac{4}{6}\cdot2^{-\rank(\xi_{r_0})}.
\end{align*}
\end{proof}

\begin{proof}[Proof of Proposition \ref{lip diag bounded}]
The second part follows from the first one and the fact that for any $x\in\X$ with $\|x\|\leq 1$,
\begin{align*}
\|P_{[N,\infty)}\hat\phi(x)\| &= \sup_{\ga\in\Ga}|(\hat\phi(e_\ga^*\circ P_{[N,\infty)}))(x)|\\
&\leq \sup_{\ga\in\Ga}\lim_m\Big(\big|\phi\big(\kappa(\ga)\big)\big|\Big\|e_\ga^*\Big(P_{[N,m)}x\Big)\Big\| + 7\weight([N,m),\ga)M\Big)\\
&\leq \sup_{\ga\in\Ga}\Big(M\Big\|e_\ga^*\circ P_{[N,\infty)}\Big\| + (7/8)M\Big) \leq 4M.
\end{align*}
Choose $N\in\N$ such that $(21(L/M)+14)/2^N\leq 1/8$ (if $M=0$ then there is nothing to prove). Observe that whenever $\weight(I,\ga) = 0$ then either $e_\ga^*\circ P_I = d_\ga^*$ or $e_\ga^*\circ P_I = 0$. In either case, $\hat\phi(e_\ga^*\circ P_I) = \phi(\kappa(\ga))e_\ga^*\circ P_I$ and in particular \eqref{lip diag bounded1} holds. For the remaining cases, \eqref{lip diag bounded1} is proved by induction on $\rank(\ga)$. The case $\rank(\ga) = 1$ is covered by the fact that all such $\ga$ have zero weight.

Assume now that \eqref{lip diag bounded1} holds for all $\ga$ with $\rank(\ga)\leq n$. Let $\ga\in\Ga$ with $\rank(\ga) = n+1$ and let $I$ be an interval of $\N$.  If $\weight(I,\ga) = 0$ the desired conclusion holds. Assume therefore that $\weight(I,\ga) = \weight(\ga) =  m_j^{-1}$. Apply Proposition \ref{X_K evaluation analysis} and write

\begin{align*}
e_\ga^*\circ P_I &= \sum_{r=1}^ad_{\xi_r}^*\circ P_I + \frac{1}{m_j}\sum_{r=1}^a\la_re_{\eta_r}^*\circ P_{I_r}\circ P_I.\\
& = \sum_{\rank(\xi_r)\in I}d_{\xi_r}^* + \frac{1}{m_j}\sum_{r=r_0}^{a_0}\la_r e_{\eta_r}^*\circ P_{I_r\cap I},
\end{align*}
where $r_0 = \min\{r:\rank(\xi_r)\geq \min(I)\}$ (which is well defined because $\weight(I,\ga)>0$) and $a_0 = \min\{r:\rank(\xi_r) = a\text{ or }\rank(\xi_r)\geq\max(I)\}$. Therefore,

\begin{align}
\label{want to bound}\span\Big\|\hat \phi\big(e^*_\ga\circ P_I\big) - \phi\big(\kappa(\ga)\big)e_\ga^* \circ P_I\Big\| = \Big\|\sum_{\rank(\xi_r)\in I}\Big(\phi\big(\kappa(\xi_r)\big)-\phi\big(\kappa(\ga\big)\Big)d_{\xi_r}^*\\
 &\phantom{=}+ \frac{1}{m_j}\sum_{r=r_0}^{b_0}\la_r \Big(\hat\phi\big( e_{\eta_r}^*\circ P_{I_r\cap I}\big) - \phi\big(\kappa(\ga)\big)e_{\eta_r}^*\circ P_{I_r\cap I}\Big)\Big\|\nonumber\\
 &\leq L\sum_{\rank(\xi_r)\in I}\varrho\big(\kappa(\xi_r),\kappa(\ga)\big)\|d_{\xi_r}^*\|\label{mess1}\\
 &\phantom{=} + \frac{1}{m_j}\sum_{r=r_0}^{a_0}\Big\|\hat\phi\big( e_{\eta_r}^*\circ P_{I_r\cap I}\big) - \phi\big(\kappa(\eta_r)\big)e_{\eta_r}^*\circ P_{I_r\cap I}\Big\|\label{mess2}\\
 &\phantom{=}+ \frac{1}{m_j}\sum_{r=r_0}^{a_0}\Big|\phi\big(\kappa(\eta_r)\big)-\phi\big(\kappa(\ga)\big)\Big|\big\|e_{\eta_r}^*\circ P_{I_r\cap I}\big\|\label{mess3}.
 \end{align}
 It is better to treat the three above terms separately. By \eqref{telescoping2},
 \[\eqref{mess1}\leq 3L\frac{4}{m_j}2^{-\rank(\xi_{r_0})} \leq \frac{1}{m_j}\frac{12(L/M)}{2^N}M.\]
For the next term, use the inductive hypothesis to obtain

\begin{align*}
\eqref{mess2}&\leq \frac{1}{m_j}\sum_{r=r_0}^{b_0}7\weight(I_r\cap I,\eta_r)M\\
& \stackrel{\eqref{telescoping1}}{\leq} \frac{1}{m_j}7\Big(\weight(I_{r_0}\cap I,\eta_{r_0}) + 2\cdot 2^{\rank(\xi_{r_0})}\Big)M\\
&\leq \frac{1}{m_j}\Big(\frac{7}{8} + \frac{14}{2^N}\Big)M.
\end{align*} 
Evaluate the third term.

\begin{align*}
\eqref{mess3}&\leq \frac{1}{m_j}3\Big(2M + L\sum_{r=r_0+1}^{a_0}\varrho\big(\kappa(\eta_r),\kappa(\gamma)\big)\Big)\\
&\stackrel{\eqref{telescoping3}}{\leq}\frac{1}{m_j}\Big(6 + \frac{9(L/M)}{2^{N}}\Big)M
\end{align*}
Combining the estimates for \eqref{mess1}, \eqref{mess2}, and \eqref{mess3} one obtains
\[\eqref{want to bound}\leq \frac{1}{m_j}\Big(\frac{7}{8} + 6+ \frac{21(L/M)+14}{2^N}\Big)M \leq 7\weight(\ga)M.\]
\end{proof}

\subsection{The embedding $\Psi:C(K)\to\mathpzc{Cal}(\X)$}
Denote by $\mathrm{Lip}(K)$ the algebra of all Lipschitz function $\phi:K\to\C$. By the Stone-Weierstrass theorem, $\mathrm{Lip}(K)$ is dense in $C(K)$. Proposition \ref{lip diag bounded} yields that the map $\widehat{\,\cdot\,}:\mathrm{Lip}(K)\to \mathcal{L}(\X)$ is a well defined (but unbounded) linear homomorphism. Denote by $[\,\cdot\,]:\mathcal{L}(\X)\to\mathcal{L}(\X)/\mathcal{K}(\X) = \mathpzc{Cal}(\X)$ the quotient map. The map $\Psi:\mathrm{Lip}(K)\to \mathpzc{Cal}(\X)$ given by $\Psi(\phi) = [\hat\phi]$ is a bounded homomorphism and it is shown here that it is also an embedding. It is then shown that, by renorming $\X$, $\Psi$ can be turned into a homomorphic isometric embedding.

\begin{prop}
\label{embedding formula}
The map $\Psi:\mathrm{Lip}(K)\to \mathpzc{Cal}(\X)$ extends to a homomorphic embedding $\Psi :C(K)\to \mathpzc{Cal}(\X)$.

In fact, for every equivalent norm $\trn{\cdot}$ on $\X$, the map $\Psi:C(K)\to \mathpzc{Cal}(\X,\trn{\cdot})$ is noncontractive.
\end{prop}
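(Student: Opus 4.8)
The plan is to reduce the whole proposition to the single estimate $\trn{[\hat\phi]}\geq\|\phi\|_\infty$, to be proved for every $\phi\in\mathrm{Lip}(K)$ and every norm $\trn{\cdot}$ on $\X$ equivalent to $\|\cdot\|$. Granting it, take first $\trn{\cdot}=\|\cdot\|$: then $\Psi$ is bounded below by $1$ on $\mathrm{Lip}(K)$, while Proposition~\ref{lip diag bounded} shows it is bounded there, since $P_{[1,N)}\hat\phi$ has finite rank and hence $\|[\hat\phi]\|\leq\|\hat\phi-P_{[1,N)}\hat\phi\|\leq 4\|\phi\|_\infty$. As $\mathrm{Lip}(K)$ is a dense subalgebra of $C(K)$ and $\Psi$ is a continuous homomorphism on it, $\Psi$ extends to a homomorphic embedding $C(K)\to\mathpzc{Cal}(\X)$; and for a general equivalent norm the estimate is exactly the asserted noncontractivity of $\Psi\colon C(K)\to\mathpzc{Cal}(\X,\trn{\cdot})$.

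I would prove the estimate by passing to the spectral radius. In a unital Banach algebra the norm of an element is at least its spectral radius, and the spectral radius of $[\hat\phi]$ in $\mathpzc{Cal}(\X)$ --- equal to the largest modulus on its essential spectrum $\sigma_{\mathrm{ess}}(\hat\phi)$, i.e.\ on the spectrum of $[\hat\phi]$ in $\mathcal{L}(\X)/\mathcal{K}(\X)$ --- depends only on the algebra $\mathcal{L}(\X)/\mathcal{K}(\X)$ and its purely algebraic notion of invertibility, so it is unchanged when $\X$ is renormed with an equivalent norm. Hence it suffices to show $\phi(K)\subseteq\sigma_{\mathrm{ess}}(\hat\phi)$, for then $\trn{[\hat\phi]}\geq r([\hat\phi])\geq\max_{\kappa\in K}|\phi(\kappa)|=\|\phi\|_\infty$ for all equivalent norms at once.

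To establish $\phi(K)\subseteq\sigma_{\mathrm{ess}}(\hat\phi)$, fix $\kappa_0\in K$, set $\lambda=\phi(\kappa_0)$, and suppose for contradiction that $[\lambda I-\hat\phi]$ is invertible in $\mathpzc{Cal}(\X)$, say $B(\lambda I-\hat\phi)=I+A$ with $B\in\mathcal{L}(\X)$ and $A\in\mathcal{K}(\X)$. Because $\De^0_n=\{(n,\kappa):\kappa\in K_n\}\subseteq\Ga$ and $K_n$ is an $m_n^{-1}2^{-(n+1)}$-net of $K$ with $m_n^{-1}2^{-(n+1)}\to 0$, choose $\ga_n\in\De^0_n$ with $\varrho(\kappa(\ga_n),\kappa_0)\leq m_n^{-1}2^{-(n+1)}$; then $\rank(\ga_n)=n\to\infty$ and $\kappa(\ga_n)\to\kappa_0$, so from $\hat\phi d_{\ga_n}=\phi(\kappa(\ga_n))d_{\ga_n}$, $\|d_{\ga_n}\|\leq 2$, and continuity of $\phi$ we get $\|(\lambda I-\hat\phi)d_{\ga_n}\|\to 0$, hence $\|d_{\ga_n}+Ad_{\ga_n}\|\to 0$. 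Since $A$ is compact, along a subsequence $Ad_{\ga_n}\to y$ in norm, whence $d_{\ga_n}\to -y$ in norm. But $\|d_{\ga_n}\|\geq|d_{\ga_n}(\ga_n)|=|i_n(e_{\ga_n})(\ga_n)|=1$ forces $\|y\|\geq 1$, while for each fixed $\xi\in\Ga$ and all $n>\rank(\xi)$ we have $\xi\in\Ga_n$ and hence $e_\xi^*(d_{\ga_n})=i_n(e_{\ga_n})(\xi)=e_{\ga_n}(\xi)=0$, so $e_\xi^*(y)=0$ for all $\xi$ and thus $y=0$, the norm of $\X$ being the supremum norm inherited from $\ell_\infty(\Ga)$; this contradicts $\|y\|\geq 1$.

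The step to watch is $Ad_{\ga_n}\to 0$, and it is exactly what the spectral-radius detour is meant to handle. A direct lower bound for $\|\hat\phi+A\|$ obtained by testing on $d_{\ga_n}$ would require $\|Ad_{\ga_n}\|\to 0$ for an arbitrary compact $A$; one would normally extract this from weak nullity of $(d_{\ga_n})$, but that property rests on the conditional structure of $\X$ studied only in Sections~\ref{common concepts section} and~\ref{operators section}, and it would additionally cost a factor of the basis constant, spoiling the sharp constant $1$. Going through the essential spectrum turns $B(\lambda I-\hat\phi)=I+A$ into $Ad_{\ga_n}\approx-d_{\ga_n}$, after which the $\mathscr{L}_\infty$-structure alone --- the supremum norm together with $e_\xi^*(d_{\ga_n})=0$ for large $n$ --- identifies the limit of $Ad_{\ga_n}$ as $0$; this yields the contradiction with no weak-compactness input and with the sharp, norm-independent bound.
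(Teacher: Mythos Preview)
Your proof is correct but takes a different route from the paper's. The paper's argument is more elementary: for each $\kappa\in\cup_nK_n$, the subspace $Y_\kappa=\overline{\langle\{d_\ga:\kappa(\ga)=\kappa\}\rangle}$ is infinite-dimensional (since $(n,\kappa)\in\De_n^0$ for all large $n$), and $\hat\phi$ acts on it as the scalar $\phi(\kappa)$. Since a compact operator is never bounded below on an infinite-dimensional space, for every $A\in\mathcal{K}(\X)$ one has $\trnsmall{(\hat\phi-A)|_{Y_\kappa}}\geq|\phi(\kappa)|$ in any equivalent norm; density of $\cup_nK_n$ finishes. No spectral-radius machinery, no sequential argument.

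Your detour through $\phi(K)\subseteq\sigma_{\mathrm{ess}}(\hat\phi)$ is valid and has the virtue of making the renorming-invariance transparent, since the spectrum is purely algebraic. Your closing commentary, however, undersells the direct approach: you worry that testing on a sequence $(d_{\ga_n})$ needs weak nullity, but the paper bypasses this by working with an entire infinite-dimensional \emph{exact} eigenspace $Y_\kappa$ (available for $\kappa$ in the dense set $\cup_nK_n$) rather than an approximate-eigenvector sequence aimed at a general $\kappa_0\in K$. The eigenspace viewpoint gives the sharp constant $1$ immediately, with no basis-constant loss and no input from the conditional structure.
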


\begin{proof}
By Proposition \ref{lip diag bounded}, the map $\Psi$ is well defined and $\|\Psi\| \leq 4$. To verify the last statement, fix an equivalent norm $\trn{\cdot}$ on $\X$ and $\phi\in\mathrm{Lip}(K)$. Take $\kappa \in\cup_n K_n$ and let $Y_\kappa = \overline{\langle\{d_\ga:\kappa(\ga) = \kappa\}\rangle}$. This space is infinite dimensional; for $n$ sufficiently large $\gamma = (n,\kappa)\in\De^0_{n}$ and $\kappa(\ga) = \kappa$. By definition, $\hat \phi(x) = \phi(\kappa)x$, for all $x\in Y_\kappa$. Therefore, for all $A\in\mathcal{K}(\X)$, $\trnsmall{(\hat\phi - A)|_{Y_\kappa}} \geq |\phi(\kappa)|$. Because $\cup_nK_n$ is dense in $K$, $\trnsmall{\hat\phi - A}\geq\|\phi\|_\infty$.
\end{proof}

To turn $\Psi$ into an isometric embedding the semigroups $(\mathcal{B}_n)_n$ are used.

\begin{prop}
\label{renorming}
There exists an equivalent norm $\trn{\cdot}$ on $\X$ such that the map $\Psi:C(K)\to\mathpzc{Cal}(\X,\trn{\cdot})$ is a homomorphic isometry.
\end{prop}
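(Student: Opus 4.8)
The plan is to define the new norm so that a Lipschitz diagonal operator $\hat\phi$ with $\|\phi\|_\infty\le 1$ becomes an actual contraction modulo compacts, while retaining the noncontractivity already established in Proposition \ref{embedding formula}. The key device is the compact multiplicative semigroups $\mathcal{B}_n\subset B_{C(K)}$, together with the fact recorded in Proposition \ref{lip diag bounded} that $\|\hat\phi - P_{[1,N)}\hat\phi\|\le 4\|\phi\|_\infty$ for $N=N(\phi)$. First I would introduce, for $x\in\X$, the new norm
\[
\trn{x} = \sup\Big\{\|\hat\phi(x)\| : n\in\N,\ \phi\in\mathcal{B}_n\Big\},
\]
which is well defined because $\|\hat\phi\|<\infty$ for every Lipschitz $\phi$ (Proposition \ref{lip diag bounded}) and each $\mathcal{B}_n$ consists of $2^n$-Lipschitz functions bounded by $1$. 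One checks that $\trn{\cdot}$ is a norm dominating $\|\cdot\|$ (take $\phi\equiv 1$, which lies in each $\mathcal{B}_n$ since $\theta$ may equal $0$ and then $\theta^k\cdots=1$ when $k=0$ — or more safely include the constant $1$ by noting $\phi_{i}\cdot\overline{\phi_i}$ arguments; in any case the constant function $1$ is in the semigroup) and is dominated by a multiple of $\|\cdot\|$ because $\sup_n\sup_{\phi\in\mathcal{B}_n}\|\hat\phi\|$ must be shown finite. Establishing that last supremum is finite is the crux: it does \emph{not} follow from Proposition \ref{lip diag bounded} naively, since the Lipschitz constants $2^n$ blow up. The remedy is the second conclusion of Proposition \ref{lip diag bounded}, namely $\|\hat\phi - P_{[1,N)}\hat\phi\|\le 4$ for all $\phi$ with $\|\phi\|_\infty\le 1$, \emph{uniformly}, once $N$ is chosen for the pair $(L,M)=(2^n,1)$; combined with the fact that $\|P_{[1,N)}\hat\phi\|\le \|P_{[1,N)}\|\cdot\|\hat\phi|_{\text{finite block}}\|\le 2\cdot(\text{something controlled by }N)$. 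Here one uses that on the finite-dimensional block $P_{[1,N)}\X$ the operator $\hat\phi$ has norm at most $2\|\phi\|_\infty\cdot(\text{dimension-dependent constant})$, but since we take $\sup$ over all $\phi\in\bigcup_n\mathcal{B}_n$ and each such $\phi$ has $\|\phi\|_\infty\le 1$, the bound $\|P_{[1,N)}\hat\phi\|$ depends only on $N$ and the ambient $\mathscr{L}_\infty$ constant — it is uniform in $\phi$ for fixed $N$. Since $N$ can be taken to be a single value working for all $(2^n,1)$ simultaneously is \emph{not} automatic, the correct move is: the supremum defining $\trn{x}$ is really a sup over the directed family $\bigcup_n\mathcal{B}_n$, and we only need $\trn{x}\le C\|x\|$ for a \emph{fixed} $C$; this we get from $\trn{x}\le \sup_n\sup_{\phi\in\mathcal{B}_n}\|\hat\phi\|\cdot\|x\|$ provided that supremum is finite, which in turn follows because for each $n$, $\mathcal{B}_n$ is compact in $C(K)$ and $\phi\mapsto\hat\phi$ is continuous into $\mathcal{L}(\X)$ on Lipschitz functions — wait, that continuity is exactly what fails. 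The honest resolution is to bound $\|\hat\phi\|$ for $\phi\in\mathcal{B}_n$ directly via Proposition \ref{lip diag bounded}: with $N_n=N(2^n,1)$ one has $\|\hat\phi\|\le 4 + \|P_{[1,N_n)}\hat\phi\|$ and the latter is at most $2\cdot 2\cdot\max_{m<N_n}|\phi(\kappa(\ga_m))|\le 4$ since on each one-dimensional piece $d_\ga$, $\hat\phi d_\ga=\phi(\kappa(\ga))d_\ga$ and $|\phi(\kappa(\ga))|\le 1$, so $\|P_{[1,N_n)}\hat\phi\|\le \|P_{[1,N_n)}\|\cdot\sup_\ga|\phi(\kappa(\ga))|\cdot(\text{bound on }P_{[1,N_n)}\hat\phi\text{ on its range})$. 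Using $\|P_{[1,N)}\|\le 2$ and the FDD structure, $\|P_{[1,N)}\hat\phi P_{[1,N)}\|\le 2\cdot 3\cdot 1$ say; in any event one gets $\|\hat\phi\|\le C_0$ for an absolute constant $C_0$ and all $\phi\in\bigcup_n\mathcal{B}_n$. Hence $\|x\|\le\trn{x}\le C_0\|x\|$ and $\trn{\cdot}$ is equivalent.

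The next step is \emph{multiplicativity of the semigroups forces contractivity}. Because each $\mathcal{B}_n$ is closed under multiplication and $\hat{\psi\phi}=\hat\psi\hat\phi$, for $\psi\in\mathcal{B}_n$ and $\phi\in\mathcal{B}_m$ with (say) $n\le m$ one has $\psi\phi\in\mathcal{B}_m$ up to the $\theta$-factor bookkeeping — more precisely, if $\phi\in\mathcal{B}_n$ and $\psi\in\mathcal{B}_n$ then $\psi\phi\in\mathcal{B}_n$. Thus for a fixed Lipschitz $\phi$ with $\|\phi\|_\infty\le 1$, approximate $\phi$ in $C(K)$ by $\phi'\in\mathcal{A}$ with $\|\phi'\|_\infty$ slightly less than $1$; then $\phi'\in\mathcal{B}$ and for $n$ large, $\tfrac{n}{n+1}$ close enough to $1$, we get $\phi'\in\mathcal{B}_n$, and for every $\psi\in\mathcal{B}_k$ ($k\le n$) we have $\psi\phi'\in\mathcal{B}_n$, whence $\|\hat\psi(\hat{\phi'}x)\|=\|\widehat{\psi\phi'}(x)\|\le\trn{x}$. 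Taking the sup over $\psi\in\bigcup_k\mathcal{B}_k$ gives $\trn{\hat{\phi'}x}\le\trn{x}$, i.e. $\trnsmall{\hat{\phi'}}\le 1$. A limiting argument over $\phi'\to\phi$ (using $\|\widehat{\phi-\phi'}\|$ is controlled in the \emph{old} norm, hence in the new one up to $C_0$) then yields $\trnsmall{\hat\phi}\le\|\phi\|_\infty$ for every Lipschitz $\phi$, and passing to $\mathpzc{Cal}$, $\|\Psi(\phi)\|_{\trn{\cdot}}\le\|\phi\|_\infty$. Combined with noncontractivity from Proposition \ref{embedding formula} (which was stated for \emph{every} equivalent norm), we conclude $\|\Psi(\phi)\|_{\trn{\cdot}}=\|\phi\|_\infty$ on the dense subalgebra $\mathrm{Lip}(K)$, hence on all of $C(K)$ by density, so $\Psi:C(K)\to\mathpzc{Cal}(\X,\trn{\cdot})$ is a homomorphic isometry.

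Finally one must check that $\trn{\cdot}$ is an admissible renorming of a Banach space — completeness is automatic from equivalence — and that the $\mathscr{L}_\infty$ structure and the decomposition of Theorem statement \ref{main diag+comp} are unaffected by an equivalent renorming (they are, being isomorphic invariants). The main obstacle, as indicated above, is the uniform boundedness $\sup\{\|\hat\phi\|:\phi\in\bigcup_n\mathcal{B}_n\}<\infty$: the Lipschitz constants in $\mathcal{B}_n$ grow like $2^n$, so the $7\weight(I,\ga)M$ estimate of Proposition \ref{lip diag bounded} with $M=1$ is what rescues us — the key point there is that the dependence on $L$ in the choice of $N$ enters only through $N$, while the resulting operator-norm bound $4M$ on the tail is \emph{independent of $L$}, and the head $P_{[1,N)}\hat\phi$ is trivially bounded by $\|\phi\|_\infty$ times a constant depending on $N$ but not on which $\phi\in\mathcal{B}_n$ we took. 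Assembling these uniform bounds carefully is the only delicate part; the multiplicativity argument and the passage to the quotient are then routine.
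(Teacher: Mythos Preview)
Your proposed norm $\trn{x} = \sup\{\|\hat\phi(x)\| : \phi\in\bigcup_n\mathcal{B}_n\}$ is \emph{not} equivalent to $\|\cdot\|$ when $K$ is infinite, and this is a genuine gap rather than a detail to be ``assembled carefully.'' The obstruction is exactly the one you keep circling: to get $\trn{x}\le C\|x\|$ you need $\sup\{\|\hat\phi\|:\phi\in\bigcup_n\mathcal{B}_n\}<\infty$, but this supremum is infinite. Indeed, for each $i$ and each $\theta<1$ one has $\theta\phi_i\in\bigcup_n\mathcal{B}_n$, so the supremum dominates $\sup_i\|\hat{\phi_i}\|$. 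If this were finite, then for any $\phi\in C(K)$ with $\|\phi\|_\infty\le 1$, picking $\phi_{i_k}\to\phi$ and evaluating on finitely supported $x$ would give $\|\hat\phi x\|\le(\sup_i\|\hat{\phi_i}\|)\|x\|$, so every continuous $\phi$ would yield a bounded $\hat\phi$ --- contradicting the Remark following Proposition~\ref{renorming} (equivalently Proposition~\ref{L(X) prop}\ref{c0 in L(X)}). Your attempted bound on $\|P_{[1,N_n)}\hat\phi\|$ fails because the basis $(d_\gamma)$ is conditional: a diagonal operator in a conditional basis is not controlled by the sup of its entries, and the unconditional constant of the initial segment $\Gamma_{N_n-1}$ grows without bound as $N_n\to\infty$.

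The paper's remedy is to build the truncation directly into the norm: define $\|x\|_n=\sup_{\phi\in\mathcal{B}_n}\|P_{[N(n),\infty)}\hat\phi(x)\|$ (with $N(n)$ the constant from Proposition~\ref{lip diag bounded} for $(L,M)=(2^n,1)$) and $\trn{x}=\sup_{n\ge 0}\|x\|_n$. The tail bound $\|P_{[N(n),\infty)}\hat\phi\|\le 4$ is uniform in $n$, so equivalence is immediate. Because the discarded piece $P_{[1,N(n))}\hat\phi$ is finite-rank, it is invisible in the Calkin algebra, and the multiplicativity argument (your core idea) still runs: for $\phi=\phi_{n_0}\in\mathcal{B}$, since $\frac{n}{n+1}\phi\psi\in\mathcal{B}_{\max(m,n)}$ for $\psi\in\mathcal{B}_m$ and the projections commute with the diagonal operators, one gets $\trnsmall{P_{[N(n),\infty)}\hat\phi}\le\frac{n+1}{n}$ and hence $\|\Psi(\phi)\|_{\trnsmall{\cdot}}\le 1$. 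Your limiting step from $\phi'\in\mathcal{B}$ to Lipschitz $\phi$ via $\|\widehat{\phi-\phi'}\|$ is also broken (that quantity is not controlled by $\|\phi-\phi'\|_\infty$), but it is unnecessary: density of $\mathcal{B}$ in the unit ball of $C(K)$ and boundedness of $\Psi$ suffice.
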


\begin{proof}
By Proposition \ref{embedding formula}, it suffices to find a norm $\trn{\cdot}$ on $\X$ that makes $\Psi$ nonexpansive. Recall that, for $n\in\N$, each $\phi\in\mathcal{B}_n$ has norm at most one and is $2^n$-Lipschitz. Let $N(n)$ denote the minimum $N$ given by Proposition \ref{lip diag bounded}, for $L=2^n$ and $M=1$. This means that for each $m\leq n\in\N$ and $\phi\in\mathcal{B}_m$, $\|P_{[N(n),\infty)}\hat\phi \|\leq 4$. For each $x\in\N$ define $\|x\|_0 = \|x\|$,
\[\|x\|_n = \sup\Big\{\big\|P_{[N(n),\infty)}\hat\phi(x)\big\|:\phi\in\mathcal{B}_n\Big\}, \text{ for }n\in\N,\]
and $\trn{x} = \sup_{n\geq 0}\|x\|_n$. Then, $\|x\| \leq \trn{x}\leq 4\|x\|$. Recall that $\mathcal{A}$ is a dense $\C_\Q$-linear subspace of $C(K)$ and that $\mathcal{B} = \{\phi\in\mathcal{A}:\|\phi\|_\infty\leq 1\} = \{\phi_1,\phi_2,\ldots\}$. To complete the proof, it suffices to fix $\phi\in \mathcal{B}$ and show $\inf_{A\in\mathcal{K}(\X)}\trnsmall{\hat \phi - A} \leq 1$.

Let $n_0\in\N$ such that $\phi = \phi_{n_0}$. For $n\geq n_0$ it will be now shown that $\trnsmall{\hat\phi - P_{[1,N(n))}\hat\phi }\leq \frac{n+1}{n}$. To that end, let $x\in\X$ with $\trn{x}\leq 1$. First, to compute $\|\hat\phi P_{[N(n),\infty)}x\|_0$ note that $\frac{n}{n+1}\phi\in\mathcal{B}_n$. Therefore,
\[\|P_{[N(n),\infty)}\hat\phi x\|_0 = \frac{n+1}{n}\Big\|P_{[N(n),\infty)}\widehat{\big(\textstyle{\frac{n}{n+1}}\phi\big)} x\Big\| \leq \frac{n+1}{n}\|x\|_n \leq \frac{n+1}{n}\trn{x}.\]
Next, take $m\in\{1,\ldots,n\}$ and $\psi\in\mathcal{B}_m$. Then, $\hat \phi$, $\hat\psi$, $P_{[N(m),\infty)}$, $P_{[N(m),\infty)}$ all commute and $\frac{n}{n+1}\phi\psi\in\mathcal{B}_n$. This yields
\[\Big\|P_{[N(m),\infty)}\hat\psi \Big(P_{[N(n),\infty)}\hat\phi x\Big)\Big\| = \frac{n+1}{n}\Big\|P_{[N(n),\infty)}\widehat{\big(\textstyle{\frac{n}{n+1}}\phi\psi\big)}x\Big\|\leq\frac{n+1}{n}\|x\|_n.\]
Therefore, $\|P_{[N(n),\infty)}\hat\phi x\|_m\leq \frac{n+1}{n}\trn{x}$. Finally, take $m>n$ and $\psi\in\mathcal{B}_m$. Then, $\frac{n}{n+1}\phi\psi\in\mathcal{B}_m$. This yields,
\[\Big\|P_{[N(m),\infty)}\hat\psi P_{[N(n),\infty)}\hat\phi x\Big\| = \frac{n+1}{n}\Big\|P_{[N(m),\infty)}\widehat{\big(\textstyle{\frac{n}{n+1}}\phi\psi\big)}x\Big\|\leq\frac{n+1}{n}\|x\|_m,\]
i.e., $\|P_{[N(n),\infty)}\hat\phi x\|_m \leq  \frac{n+1}{n}\trn{x}$.
\end{proof}

\begin{rem}
Unless $K$ is finite, it is not true that for every continuous function $\phi:K\to\C$ the linear map $\hat \phi:\langle\{d_\ga:\ga\in\Ga\}\rangle \to\langle\{d_\ga:\ga\in\Ga\}\rangle$ extends to a bounded linear operator on $\X$. If this were the case then, by the closed graph theorem, $\widehat{\cdot}: C(K)\to\mathcal{L}(\X)$ would be a homomorphic embedding. By Proposition \ref{L(X) prop} \ref{c0 in L(X)} this is impossible.
\end{rem}

\section{The impact of the conditional structure of $\X$}
\label{impact section}
This relatively brief section discusses the outcomes of the conditional structure of $\X$ (the properties derived from the definition of odd-weight members $\ga$ of  $\Ga$). These outcomes are presented in the form of two black-box Theorems that can be used to directly prove that the embedding $\Psi:C(K)\to\mathpzc{Cal}(\X)$ is onto. The proofs of these Theorems are based on HI techniques and are included in Sections  \ref{common concepts section} and \ref{operators section}.

\begin{thm}
\label{eventual continuity}
Let $T:\X\to\X$ be a bounded linear operator. Then, for every $\e>0$ there exist $n\in\N$ and $\delta>0$ such that for all $\ga$,$\ga'\in\Ga$ with $\min\{\rank(\ga),\rank(\ga')\}\geq n$ and $\varrho(\kappa(\ga),\kappa(\ga'))<\delta$,
\begin{equation}
\label{eventual continuity eq}\big|d_\ga^*\big(Td_\ga\big) - d_{\ga'}^*\big(Td_{\ga'}\big)\big| <\e.
\end{equation}
Therefore, the function $\varphi_T:K\to \C$ given by
\[\varphi_T(\kappa) = \lim_{\substack{\rank(\ga)\to\infty\\\kappa(\ga)\to\kappa}}d_\ga^*\big(Td_\ga\big)\]
is well defined and continuous.
\end{thm}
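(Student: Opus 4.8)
The plan is to reduce everything to the quantitative estimate \eqref{eventual continuity eq} and prove \emph{that} by contradiction. Once \eqref{eventual continuity eq} is available, the well-definedness of $\varphi_T$ and its (uniform) continuity are immediate: for a fixed $\e>0$, choosing $n,\delta$ as in the statement, the net $\big(d_\ga^*(Td_\ga)\big)$ taken over $\ga$ with $\rank(\ga)\ge n$ and $\kappa(\ga)$ within $\delta/2$ of a given point has diameter $<\e$ (and such $\ga$ exist in abundance, since each $K_m$ is a net in $K$ and $(m,\kappa)\in\De_m^0$ for all $m$), so the limit exists, is independent of the approximating sequence, and depends continuously on the point.

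So suppose \eqref{eventual continuity eq} fails: there is $\e_0>0$ such that for every $n$ and $\delta$ one can find $\ga,\ga'\in\Ga$ with $\min\{\rank(\ga),\rank(\ga')\}\ge n$, $\varrho(\kappa(\ga),\kappa(\ga'))<\delta$, and $|d_\ga^*(Td_\ga)-d_{\ga'}^*(Td_{\ga'})|\ge\e_0$. Iterating with $n\to\infty$ and $\delta\to 0$ and passing to subsequences, I would extract $(\ga_k)_k,(\ga'_k)_k$ in $\Ga$ whose ranks are strictly increasing, interleaved, and tend to $\infty$ arbitrarily fast, with $\varrho(\kappa(\ga_k),\kappa(\ga'_k))$ decaying faster than $2^{-\max\{\rank(\ga_{k-1}),\rank(\ga'_{k-1})\}}$; by compactness of $(K,\varrho)$ a further subsequence produces a single $\kappa_0\in K$ with $\kappa(\ga_k)\to\kappa_0$ and $\kappa(\ga'_k)\to\kappa_0$ (again at prescribed speed), while $|d_{\ga_k}^*(Td_{\ga_k})-d_{\ga'_k}^*(Td_{\ga'_k})|\ge\e_0$ for all $k$. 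I would also thin out so that the ranks grow fast enough for the rapidly increasing sequence estimates inherited from \cite{argyros:haydon:2011} to apply to vectors assembled from the $d_{\ga_k}$'s and $d_{\ga'_k}$'s.

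The contradiction is then obtained with the HI-type machinery of Sections \ref{common concepts section} and \ref{operators section}. Using Proposition \ref{gamma builder}, one manufactures from the $\ga_k$'s elements $\zeta\in\Ga$ of a fixed even weight $m_{2j}^{-1}$ whose evaluation analysis $e_\zeta^*=\sum_r d_{\xi_r}^*+\frac{1}{m_{2j}}\sum_r\la_r e_{\eta_r}^*\circ P_{I_r}$ has its $\eta$-part populated by the $\ga_k$'s, and symmetrically elements $\zeta'$ from the $\ga'_k$'s; the smallness of $\varrho(\kappa(\ga_k),\kappa_0)$ and $\varrho(\kappa(\ga'_k),\kappa_0)$ is exactly what allows both constructions to target the same $\kappa_0$ in the hypotheses of Proposition \ref{gamma builder}. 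Comparing the normalized $\ell_1^{n_{2j}}$-averages $x$, $x'$ associated to the two families (or, for the sharp estimate, dependent sequences built along an odd special index via the coding function $\sigma$), the structural facts from Section \ref{operators section} — that a bounded operator acts on a RIS as multiplication by a scalar up to negligible error, and that off-diagonal mass along a very fast growing sequence is controlled — force $e_\zeta^*(Tx)$ and $e_{\zeta'}^*(Tx')$ to be governed by averages of the $d_{\ga_k}^*(Td_{\ga_k})$'s and of the $d_{\ga'_k}^*(Td_{\ga'_k})$'s respectively. Since these cluster around values at distance at least $\e_0$ while the underlying points of both families coalesce at $\kappa_0$, no single bounded operator can accommodate both, a contradiction.

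The hard part is precisely this last transfer, from ``the diagonal entries disagree'' to ``$T$ is unbounded,'' which rests entirely on the operator analysis of Sections \ref{common concepts section} and \ref{operators section}: the basic inequality for evaluations of weighted functionals on RIS, the negligibility of off-diagonal contributions, and the dependent-sequence trap. By comparison the preparatory bookkeeping — keeping $\weight(I_r,\eta_r)$ small so that Proposition \ref{gamma builder} applies, carrying along the coding function $\sigma$ when odd weights are used, and passing between the metrics $\rho$ and $\varrho$ — is routine but must be handled carefully. A last, genuinely necessary point is that the extraction of a \emph{common} limit point $\kappa_0$ uses compactness of $(K,\varrho)$; this is exactly where the hypothesis that $K$ is a compact metric space enters.
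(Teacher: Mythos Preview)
Your setup is right: contradict, extract two sequences whose diagonal entries cluster near distinct values $\lambda\neq\mu$ while both $\kappa$-values converge to some $\kappa_0$, then exploit the HI machinery. You also correctly invoke Proposition \ref{first diagonal proximity} (ii) (implicitly) so that $Td_{\ga_k}\approx d_{\ga_k}^*(Td_{\ga_k})d_{\ga_k}$, which is what makes ``averages of diagonal entries'' meaningful.

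The gap is in the last step. You build \emph{two separate} functionals $\zeta,\zeta'$ and two vectors $x,x'$, observe $e_\zeta^*(Tx)\approx\lambda$ and $e_{\zeta'}^*(Tx')\approx\mu$, and declare this a contradiction. It is not: different functionals on different vectors can return different values for any bounded $T$. To feed the dependent-sequence trap (Proposition \ref{zero dependent sequence bound} via Proposition \ref{norm killer}) you need $(C,2j,0)$-exact pairs, i.e., pairs $(z,\zeta)$ with $e_\zeta^*(z)=0$ but $|e_\zeta^*(Tz)|$ bounded below. Your $x$ satisfies $e_\zeta^*(x)\approx 1$, not $0$, so you have $1$-exact pairs, which are useless for Proposition \ref{zero dependent sequence bound}.

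The paper's missing idea is to \emph{interleave} the two families into a single sequence $(\theta_k)_k=(\eta_1,\zeta_1,\eta_2,\zeta_2,\dots)$ and use \emph{alternating signs}: set $z=\tfrac{m_{2j}}{n_{2j}}\sum_{k=1}^{n_{2j}}(-1)^k d_{\theta_k}$ and build a single $\zeta$ (via Proposition \ref{gamma builder}, legal since all $\kappa(\theta_k)\to\kappa_0$) with $e_\zeta^*=\sum d_{\xi_k}^*+\tfrac{1}{m_{2j}}\sum d_{\theta_k}^*$. Since $n_{2j}$ is even, the alternating signs give $e_\zeta^*(z)=0$, while (after a compact perturbation making $Td_{\eta_k}=\lambda d_{\eta_k}$, $Td_{\zeta_k}=\mu d_{\zeta_k}$) one gets $e_\zeta^*(Tz)=(\mu-\lambda)/2$. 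Now $(z,\zeta)$ is a genuine $(16C,2j,0)$-exact pair with $|e_\zeta^*(Tz)|\ge\e_0/2$, and the two-step blow-up (Step 2 in Proposition \ref{norm killer}) goes through.

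One more caution: your sentence ``a bounded operator acts on a RIS as multiplication by a scalar up to negligible error'' is false in $\X$ unless $K$ is a singleton --- the paper says so explicitly at the start of Section \ref{operators section}. What \emph{is} true, and what you need, is only the basis-vector version, Proposition \ref{first diagonal proximity} (ii).
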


Property \eqref{eventual continuity eq} can be seen as an eventual continuity of the diagonal entries $(d_\ga^*(Td_\ga))_{\ga\in\Ga}$ of $T$. Its proof goes deeply into the details of HI techniques. The fact that $\varphi_T$ is a well defined continuous function is a straightforward consequence of the density of $\cup_nK_n$ and some elementary real analysis. It then immediately follows that the linear map $\widetilde \Phi:\mathcal{L}(\X)\to C(K)$, given by $\widetilde \Phi(T) = \varphi_T$ is a bounded linear operator. Indeed, for any $T\in\mathcal{L}(\X)$ and $\gamma\in\Ga$, $|d_\ga^*(Td_\ga)| \leq \|d_\ga^*\|\|d_\ga\|\|T\|\leq 6\|T\|$. The next statement yields the remaining necessary information to complete the proof of Theorem \ref{main theorem}.

\begin{thm}
\label{compact zero diagonal}
A bounded linear operator $T:\X\to\X$ is compact if and only if $\varphi_T = 0$.
\end{thm}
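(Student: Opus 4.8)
The plan is to prove the two implications separately, with the forward direction being comparatively soft and the reverse direction being the substantive one. For the forward direction, suppose $T$ is compact. Since $(d_\ga)_{\ga\in\Ga}$ is a Schauder basis of $\X$ with $\|d_\ga\|\leq 2$, the sequence $(d_\ga)$ is weakly null along any sequence $\ga_k$ with $\rank(\ga_k)\to\infty$ (this uses that the FDD is shrinking, or at least that coordinate functionals $d_\ga^*$ tend to $0$ weak-$*$; one argues directly that for fixed $f$ in the norming set, $f(d_{\ga_k})\to 0$ since $f$ has finite support relative to the FDD blocks, hence $d_{\ga_k}\to 0$ weakly). Then $Td_{\ga_k}\to 0$ in norm by compactness, so $d_{\ga_k}^*(Td_{\ga_k})\to 0$. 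Since $\cup_n K_n$ is dense in $K$, for any $\kappa\in K$ one can choose such $\ga_k$ with $\kappa(\ga_k)\to\kappa$, and by Theorem \ref{eventual continuity} the limit defining $\varphi_T(\kappa)$ exists and equals $0$. Hence $\varphi_T=0$.

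For the reverse direction, suppose $\varphi_T=0$; I must show $T$ is compact. The strategy is to reduce to the black boxes already established. First, by Theorem \ref{eventual continuity}, the eventual-continuity estimate \eqref{eventual continuity eq} together with $\varphi_T=0$ gives that for every $\e>0$ there is $n$ with $|d_\ga^*(Td_\ga)|<\e$ for all $\ga$ with $\rank(\ga)\geq n$; i.e., the diagonal of $T$ with respect to $(d_\ga^*,d_\ga)$ tends to $0$. The natural approach is then to write $T=D+A$ as promised by Theorem \ref{main theorem}\ref{main diag+comp} — but since that decomposition is itself presumably a consequence of the present theorem, I should instead argue self-containedly as follows. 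Consider the diagonal operator $D_T$ formally defined by $D_Td_\ga = d_\ga^*(Td_\ga)d_\ga$. One shows $D_T$ is bounded and in fact compact: its diagonal entries are $d_\ga^*(Td_\ga)$, which (by eventual continuity plus $\varphi_T=0$) can be uniformly approximated by finitely supported diagonals, and — crucially — because $(d_\ga^*(Td_\ga))_{\ga}$ is, modulo an arbitrarily small perturbation and a finite-rank piece, the evaluation sequence of a continuous function on $K$ that vanishes, one can invoke Proposition \ref{lip diag bounded} (applied to Lipschitz approximants of $\varphi_T=0$) to see that $D_T$ is a norm limit of finite-rank diagonal operators, hence compact. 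The remaining task is to show $T-D_T$ is compact, and this is where the conditional (HI) structure of $\X$ must be used: this is precisely the content that Sections \ref{common concepts section} and \ref{operators section} supply, namely that any operator on $\X$ with zero diagonal (relative to the basis) is compact. So the clean way to organize the proof is: (i) the forward implication via weak nullity and Theorem \ref{eventual continuity}; (ii) for the converse, first peel off the diagonal part $D_T$, showing it is compact using Proposition \ref{lip diag bounded} and density of Lipschitz functions; (iii) then cite the black-box fact (proved later via HI techniques) that $T-D_T$, having zero diagonal, is compact.

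The main obstacle is step (iii): proving that a zero-diagonal operator on $\X$ is compact. This is genuinely where the conditional structure of the space enters, and it cannot be done by the elementary arguments available in this section — it relies on rapidly increasing sequence estimates, the special (odd-weight) functionals, and the coding function $\sigma$, exactly as in the Argyros--Haydon analysis. In the present paper this is deferred and packaged so that one may simply write: "By the results of Section \ref{operators section} (specifically, the characterization of compact operators via their diagonal), $T-D_T$ is compact." If instead one wants to avoid any circularity with Theorem \ref{main theorem}\ref{main diag+comp}, the honest statement is that Theorem \ref{compact zero diagonal} and the diagonal-plus-compact decomposition are proved together in Sections \ref{common concepts section}--\ref{operators section}, and the present theorem is the form of that result that is needed to conclude surjectivity of $\Psi$. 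I would flag this dependency explicitly rather than attempt a proof here, since any genuine proof must reproduce the HI machinery.
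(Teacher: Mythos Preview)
Your forward direction is correct and matches the paper: shrinking basis, weak nullity of $(d_\ga)$, compactness gives $\|Td_\ga\|\to 0$, hence $\varphi_T=0$.

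The reverse direction has a genuine gap. Your step (iii) is circular: ``$T-D_T$ has zero diagonal, hence is compact'' is precisely Theorem~\ref{compact zero diagonal} applied to $T-D_T$ (since $\varphi_{T-D_T}=0$). There is no separate lemma in the paper asserting that zero-diagonal operators are compact; that statement \emph{is} the theorem you are trying to prove. Your step (ii) is also problematic: the basis is conditional, so the formal diagonal $D_T d_\ga = d_\ga^*(Td_\ga)d_\ga$ need not be bounded just because its entries tend to $0$. Invoking Proposition~\ref{lip diag bounded} does not help, because that proposition controls $\hat\phi$ for a Lipschitz $\phi$, and $D_T$ is \emph{not} of the form $\hat\phi$ --- its entries are $d_\ga^*(Td_\ga)$, not $\varphi_T(\kappa(\ga))$, and the difference between these two sequences need not define a bounded diagonal operator.

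The paper's proof avoids the decomposition $T=D_T+(T-D_T)$ entirely. Instead, it first uses Proposition~\ref{first diagonal proximity}(ii), which says $\|Td_\ga - d_\ga^*(Td_\ga)d_\ga\|\to 0$ for \emph{any} bounded $T$; combined with $\varphi_T=0$ this gives $\|Td_\ga\|\to 0$ directly, without ever asserting that $D_T$ is bounded. Then, assuming $T$ is non-compact, one finds a RIS $(x_k)$ with $\|Tx_k\|$ bounded below, applies Proposition~\ref{vfg estimates} to $(Tx_k)$ to obtain functionals $e_{\eta_k}^*\circ P_{I_k}$ with vanishing weight and $|e_{\eta_k}^*(P_{I_k}Tx_k)|\geq\e$, and then sets $y_k = x_k - e_{\eta_k}^*(P_{I_k}x_k)\,d_{\eta_k}$. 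Because $\|Td_{\eta_k}\|\to 0$, the sequence $(y_k)$ is still a RIS satisfying $e_{\eta_k}^*(P_{I_k}y_k)=0$ and $\liminf|e_{\eta_k}^*(P_{I_k}Ty_k)|\geq\e$, so Proposition~\ref{norm killer} forces $T$ unbounded --- a contradiction. The key idea you are missing is this subtraction trick, which uses $\|Td_\ga\|\to 0$ (not the compactness of any diagonal piece) to kill the obstruction term.
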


An immediate consequence is that the map $\Phi:\mathpzc{Cal}(\X) \to C(K)$, defined via the formula $\Phi([T]) = \varphi_T$, is a bounded linear injection. Indeed, $\Phi = \widetilde\Phi/\mathrm{ker}\widetilde\Phi$. Now, an almost straightforward computation (together with Proposition \ref{renorming}) yields the main result of this paper.

\begin{cor}[Theorem \ref{main theorem} \ref{main identification}  \& \ref{main isometry}]
\label{cor main thm}
The map $\Phi:\mathpzc{Cal}(\X)\to C(K)$ is the inverse of $\Psi:C(K)\to\mathpzc{Cal}(X)$. Therefore, with an appropriate equivalent norm $\trn{\cdot}$, the Calkin algebra of $(\X,\trn{\cdot})$ is homomorphically isometric to $C(K)$.
\end{cor}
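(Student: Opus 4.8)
The plan is to verify that $\Psi$ and $\Phi$ are mutually inverse linear bijections and then to transport the isometry of Proposition~\ref{renorming}. Since Propositions~\ref{embedding formula} and~\ref{renorming} and Theorems~\ref{eventual continuity} and~\ref{compact zero diagonal} already carry all the analytic weight, the argument is essentially formal bookkeeping.

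First I would check that $\Phi\circ\Psi=\mathrm{id}_{C(K)}$. Both maps are continuous ($\Psi$ by Proposition~\ref{embedding formula}, and $\Phi$, being $\widetilde\Phi$ factored through $\ker\widetilde\Phi=\mathcal{K}(\X)$, by the discussion following Theorem~\ref{compact zero diagonal}), so it suffices to check the identity on the dense subalgebra $\mathrm{Lip}(K)$. For $\phi\in\mathrm{Lip}(K)$ one has $\Psi(\phi)=[\hat\phi]$, and $d_\ga^*(\hat\phi\,d_\ga)=\phi(\kappa(\ga))$ for every $\ga\in\Ga$ by the definition of $\hat\phi$ together with biorthogonality; hence $\varphi_{\hat\phi}(\kappa)=\lim_{\rank(\ga)\to\infty,\ \kappa(\ga)\to\kappa}\phi(\kappa(\ga))=\phi(\kappa)$ by continuity of $\phi$, that is, $\Phi(\Psi(\phi))=\phi$. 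Passing to limits yields $\Phi\circ\Psi=\mathrm{id}_{C(K)}$ on all of $C(K)$.

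Next I would deduce that $\Psi$ is onto, so that the two maps are mutual inverses. The relation just proved forces $\Phi$ to be surjective, while Theorem~\ref{compact zero diagonal} makes $\Phi$ injective, so $\Phi$ is a linear bijection. For an arbitrary $[T]\in\mathpzc{Cal}(\X)$, applying $\Phi\circ\Psi=\mathrm{id}$ to the element $\Phi([T])\in C(K)$ gives $\Phi\big(\Psi(\Phi([T]))\big)=\Phi([T])$, and injectivity of $\Phi$ then gives $\Psi(\Phi([T]))=[T]$. Thus $\Psi\circ\Phi=\mathrm{id}_{\mathpzc{Cal}(\X)}$ and $\Phi=\Psi^{-1}$. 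Since $\Psi$ was already a homomorphic embedding (Proposition~\ref{embedding formula}) and is now seen to be surjective, it is a Banach algebra isomorphism $C(K)\to\mathpzc{Cal}(\X)$, which is Theorem~\ref{main theorem}\,\ref{main identification}.

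Finally, for Theorem~\ref{main theorem}\,\ref{main isometry}, I would pass to the equivalent norm $\trn{\cdot}$ supplied by Proposition~\ref{renorming}. Renorming changes neither $\mathcal{L}(\X)$ as an algebra nor the ideal $\mathcal{K}(\X)$, so $\Psi:C(K)\to\mathpzc{Cal}(\X,\trn{\cdot})$ remains the same bijective homomorphism, and Proposition~\ref{renorming} asserts that it is moreover isometric; hence $\mathpzc{Cal}(\X,\trn{\cdot})$ is homomorphically isometric to $C(K)$. The whole deduction is routine, and the only step that one should not gloss over is the injectivity of $\Phi$ --- equivalently, the ``only if'' direction of Theorem~\ref{compact zero diagonal} --- since that is precisely what upgrades the one-sided identity $\Phi\circ\Psi=\mathrm{id}$ to a genuine inverse relationship.
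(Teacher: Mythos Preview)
Your proof is correct and follows essentially the same approach as the paper: verify $\Phi\circ\Psi=\mathrm{id}_{C(K)}$ on the dense set $\mathrm{Lip}(K)$ via the computation $\varphi_{\hat\phi}=\phi$, then use injectivity of both maps (from Proposition~\ref{embedding formula} and Theorem~\ref{compact zero diagonal}) to conclude they are mutual inverses, with the isometry coming from Proposition~\ref{renorming}. The paper compresses your second paragraph into the single observation that two injections satisfying $\Phi\circ\Psi=\mathrm{id}$ must be inverse to one another, but the content is identical.
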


\begin{proof}
Since both $\Phi$ and $\Psi$ are injections, proving that $\Phi\circ\Psi:C(K)\to C(K)$ is the identity map yields the conclusion. By density,  it is sufficient to show that for $\phi\in \mathrm{Lip}(K)$, $\Phi(\Psi\phi) = \phi$. Recall that $\hat\phi:\X\to\X$ is a well defined bounded linear operator and for all $\ga\in\Ga$,
\[\varphi_{\widehat\phi}(\kappa) = \lim_{\substack{\rank(\ga)\to\infty\\\kappa(\ga)\to\kappa}}d_\ga^*\big(\hat\phi(d_\ga)\big) = \lim_{\substack{\rank(\ga)\to\infty\\\kappa(\ga)\to\kappa}}\phi\big(\kappa(\ga)\big) = \phi(\kappa).\]
In other words, $\Phi(\Psi\phi) = \Phi([\hat\phi]) = \widetilde\Phi(\hat\phi) = \varphi_{\widehat\phi} = \phi$.
\end{proof}

An interesting corollary is that the space $\X$ has the diagonal-plus-compact property.

\begin{cor}[Theorem \ref{main theorem} \ref{main diag+comp}]
\label{diag+comp}
Every bounded linear operator $T:\X\to\X$ is of the form $D+A$, where $D$ is diagonal bounded linear operator and $A$ is a compact linear operator.
\end{cor}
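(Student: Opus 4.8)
The plan is to realise the diagonal part of $T$ as an operator-norm-convergent series of bounded diagonal operators, exploiting the fact that Proposition \ref{lip diag bounded} controls $\hat\phi$ modulo a finite-rank operator by $\|\phi\|_\infty$ alone, with no dependence on the Lipschitz constant.

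First, given a bounded linear operator $T:\X\to\X$, let $\varphi_T\in C(K)$ be the continuous function supplied by Theorem \ref{eventual continuity}. Since $\mathrm{Lip}(K)$ is dense in $C(K)$, choose $\phi_k\in\mathrm{Lip}(K)$ with $\|\varphi_T-\phi_k\|_\infty\leq 2^{-k}$, and set $\psi_1=\phi_1$ and $\psi_k=\phi_k-\phi_{k-1}$ for $k\geq 2$. Each $\psi_k$ is Lipschitz, $\sum_k\psi_k=\varphi_T$ uniformly, and $\sum_k\|\psi_k\|_\infty<\infty$.

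Next, for each $k$ apply Proposition \ref{lip diag bounded} to $\psi_k$ (with $L$ its Lipschitz constant and $M=\|\psi_k\|_\infty$) to obtain $N_k\in\N$ with $\|\hat\psi_k-P_{[1,N_k)}\hat\psi_k\|\leq 4\|\psi_k\|_\infty$, and put $D_k=\hat\psi_k-P_{[1,N_k)}\hat\psi_k$. A direct computation shows that $D_k$ is diagonal: $D_k(d_\ga)=0$ when $\rank(\ga)<N_k$ and $D_k(d_\ga)=\psi_k(\kappa(\ga))d_\ga$ otherwise; moreover $\|D_k\|\leq 4\|\psi_k\|_\infty$, and $P_{[1,N_k)}\hat\psi_k$ has finite rank, so $[D_k]=[\hat\psi_k]=\Psi(\psi_k)$ in $\mathpzc{Cal}(\X)$. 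Since $\sum_k\|D_k\|\leq 4\sum_k\|\psi_k\|_\infty<\infty$, the series $D=\sum_k D_k$ converges in operator norm to a bounded operator; it is diagonal because each $D_k(d_\ga)$ is a scalar multiple of $d_\ga$ and the partial sums converge in $\X$.

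Finally, by continuity of the quotient map and of $\Psi$ (Proposition \ref{embedding formula}), $[D]=\sum_k[D_k]=\sum_k\Psi(\psi_k)=\Psi(\varphi_T)$, and $\Psi(\varphi_T)=[T]$ by Corollary \ref{cor main thm} (since $\Phi\circ\Psi=\mathrm{id}$, $\Phi$ is injective, and $\Phi([T])=\varphi_T$). Hence $A:=T-D$ satisfies $[A]=0$, i.e.\ $A$ is compact, and $T=D+A$ with $D$ a bounded diagonal operator, as desired. (Alternatively, one could note $\varphi_{T-D}=\varphi_T-\varphi_D=0$ and invoke Theorem \ref{compact zero diagonal} directly.) The only genuinely delicate point is the extraction of a \emph{single} bounded diagonal operator: the naive series $\sum_k\hat\psi_k$ need not converge in operator norm because the Lipschitz constants of the $\psi_k$ may blow up; this is precisely why one subtracts the finite-rank head $P_{[1,N_k)}\hat\psi_k$ and relies on the Lipschitz-constant-free estimate $\|\hat\psi_k-P_{[1,N_k)}\hat\psi_k\|\leq 4\|\psi_k\|_\infty$.
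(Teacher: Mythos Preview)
Your proof is correct and follows essentially the same approach as the paper: decompose $\varphi_T$ as an absolutely summable series of Lipschitz functions, use Proposition \ref{lip diag bounded} to replace each $\hat\psi_k$ by its tail $P_{[N_k,\infty)}\hat\psi_k$ (which is exactly your $D_k$), and sum. The paper concludes by directly computing $\varphi_D=\varphi_T$ and invoking Theorem \ref{compact zero diagonal}, which is precisely the alternative you mention at the end; your primary route through $\Psi$ and Corollary \ref{cor main thm} is an equivalent repackaging.
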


\begin{proof}
Let $T\in\mathcal{L}(\X)$ and pick an absolutely summable sequence (in the $\|\cdot\|_\infty$-norm) of Lipschitz functions $\phi_n:K\to\C$ with $\sum_{n=1}^\infty\phi_n = \varphi_T$. Using Proposition \ref{lip diag bounded}, for each $n\in\N$, pick $N(n)$ such that $\|P_{[N(n),\infty)}\hat\phi_n\|\leq 4\|\phi_n\|_\infty$. Then, $D = \sum_{n=1}^\infty P_{[N(n),\infty)}\hat\phi_n$ is a well defined diagonal bounded linear operator and
\begin{align*}
\varphi_D = \widetilde\Phi(D) &= \sum_{n=1}^\infty\widetilde\Phi\big(P_{[N(n),\infty)}\hat\phi_n\big) = \sum_{n=1}^\infty\widetilde\Phi\big(\hat\phi_n\big) = \sum_{n=1}^\infty\phi_n = \varphi_T.
\end{align*}
This means $\varphi_{T-D} = \varphi_T - \varphi_D = 0$, i.e., $A = T-D$ is compact.
\end{proof}

\begin{rem}
The expert reader will find the following fact interesting. Given a $T\in\mathcal{L}(\X)$, Theorem \ref{eventual continuity} provides an explicit diagonal operator $D$ so that $T-D$ is compact. Therefore, Kakutani's fixed point theorem (\cite{kakutani:1941}) is not required to prove non-constructively that such a $D$ must exist. This is a crucial fact in this paper as it allows the present method to work despite omitting convex combinations in the definition of $\Ga$. This fixed point theorem was used by Gowers and Maurey in \cite{gowers:maurey:1997} and by Tarbard in \cite{tarbard:2013}.
\end{rem}

\section{Common concepts from HI methods}
\label{common concepts section}
This section  goes through the ubiquitous notions of rapidly increasing sequences, exact pairs, and dependent sequences. These are specialized vectors and sequences of vectors that have been used in almost all HI and related constructions. In particular, they were also used in \cite{argyros:haydon:2011}. Estimates of the norms of linear combinations of such objects are fundamental in the study of the geometry of these spaces and of their bounded linear operators. These estimates are dependent on notions such as an auxiliary space and a basic inequality, originating in \cite{argyros:deliyanni:1997} but found in \cite{argyros:haydon:2011} as well. The versions of the proofs found in \cite{argyros:haydon:2011} work in the current setting as well. Rather than repeating pages of identical arguments, it was chosen to refer to that paper while only highlighting the minor differences.

Henceforth, for a vector $x\in\X$ the support of $x$ is the set $\supp(x) = \{n\in\N:P_{\{n\}}x\neq0\}$. Similarly, the range of $x$ is the smallest interval $\range(x)$ of $\N$ containing $\supp(x)$. A bock sequence $(x_k)$ in $\X$ is one for which $\max\supp(x_k) < \min\supp(x_{k+1})$, for all $k$.

\subsection{Rapidly increasing sequences} These sequences are used, among other things, to identify ``small'' operators in HI-type constructions. Here, as in \cite{argyros:haydon:2011}, they are used to characterize compact operators.

\begin{dfn}
Let $C > 0$. A block sequence $(x_k)_{k\in I}$ in $\X$, indexed over an interval $I$ of $\N$, will be called a $C${\em-rapidly increasing sequence} (or $C$-RIS) if there exists an increasing sequence $(j_k)_{k\in I}$ in $\N$ such that the following hold for $k\in I$:
\begin{enumerate}[label=(\roman*)]

\item $\|x_k\| \leq C$,

\item if $k>\min(I)$, $j_{k-1} < \min\supp(x_k)$, and

\item for $\ga\in\Ga$ with $\weight(\ga) = m_i^{-1} > m_{j_k}^{-1}$, $|e_\ga^*(x_k)| \leq Cm^{-1}_i$.

\end{enumerate}
Suppressing the constant $C$, $(x_k)_{k\in I}$ will be called a RIS.
\end{dfn}

\begin{exa}
\label{basis RIS}
For every  sequence $(\ga_k)_k$ in $\Ga$ with $\rank(\ga_k)\to\infty$, $(d_{\ga_k})_k$ has a subsequence that is s RIS.
\end{exa}

Indeed, for any $\ga$, $\ga'\in\Ga$ with $\weight(\ga') = m_i^{-1}\neq \weight(\ga)$, Proposition \ref{X_K evaluation analysis} yields that, unless $e_{\ga'}^*(d_\ga) = 0$, there exist $\la\in\D$ and $\eta\in\Ga$ such that $e_{\ga'}^*(d_\ga) = m_i^{-1}\la e_\eta^*(d_\ga)$, i.e., $|e_{\ga'}^*(d_\ga)|\leq 2m_i^{-1}$. If $\lim_k\weight(\ga_k) = 0$, then it is not difficult to pick a subsequence of $(d_{\ga_k})_k$ that is a $2$-RIS. Otherwise, if there is $i_0$ so that for infinitely many $k\in\N$, $\weight(\ga_k) = m_{i_0}^{-1}$, then  $(d_{\ga_k})_k$ has subsequence that is a $(2m_{i_0})$-RIS.

\begin{rem}
If $(x_k)_k$ and $(y_k)_k$ are RISs and $(\la_k)_k$, $(\mu_k)_k$ are bounded sequences of scalars, then $(\la_k x_k+\mu_k y_k)_k$ has a subsequence that is a RIS.
\end{rem}

\begin{prop}
\label{ris to zero compact}
Let $Y$ be a Banach space and $T:\X\to Y$ be a bounded linear operator. If $\lim_k\|Tx_k\| = 0$ for every RIS $(x_k)_k$ in $\X$ then $\lim_k\|Tx_k\| = 0$ for any bounded block sequence in $\X$.
\end{prop}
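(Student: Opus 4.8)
The plan is to argue by contradiction and reduce everything to a RIS. Suppose $(x_k)$ is a bounded block sequence with $\|Tx_k\|\not\to 0$; after passing to a subsequence I may assume $\|Tx_k\|\ge\delta$ for some $\delta>0$, and since $\|Tx_k\|\le\|T\|\,\|x_k\|$ the sequence $(x_k)$ is seminormalised, so after renormalising I take $\|x_k\|=1$. The goal is to build from $(x_k)$ a RIS $(y_\ell)$ with $\inf_\ell\|Ty_\ell\|>0$, which would contradict the hypothesis.

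First I would invoke the $\ell_1$-average technology of the Argyros--Haydon construction, which (as the paper emphasises) transfers verbatim to $\X$: every normalised block sequence contains, for each $j$, a normalised vector that is a $C_0$-$\ell_1^{n_j}$-average of a block of its terms for some universal $C_0$, that is, a vector of the form $\sum_{k\in F}a_kx_k$ with $a_k\ge 0$ and $\sum_{k\in F}a_k=1$; and a block sequence of such averages with strictly increasing indices $j_1<j_2<\cdots$ and sufficiently spread-out ranges is a $C$-RIS. Choosing successive index blocks $F_\ell$ with $\min F_\ell\to\infty$ produces a RIS $(y_\ell)$ in which each $y_\ell=\sum_{k\in F_\ell}a^{(\ell)}_kx_k$ is a convex combination of an arbitrarily late block of the $(x_k)$.

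The remaining issue is to see that $\|Ty_\ell\|\not\to 0$, and here I would split on the behaviour of $(Tx_k)$ in $Y$. If there is $c>0$ with $\|T(\sum_{k\ge N}a_kx_k)\|\ge c$ for every $N$ and every convex combination, then $\|Ty_\ell\|\ge c$ and we are finished; this covers the cases where $(Tx_k)$ has a norm-convergent subsequence (its limit has norm at least $\delta$, and convex combinations of late tails stay close to it) or a subsequence equivalent to the unit vector basis of $\ell_1$ (where no cancellation happens). Otherwise there are convex combinations of arbitrarily late tails of $(Tx_k)$ of arbitrarily small norm, and Rosenthal's $\ell_1$-theorem then forces $(Tx_k)$ to have a weakly null subsequence, to which I pass. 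In this last situation one has to use the hereditarily indecomposable machinery: using the basic inequality and the auxiliary-space estimates of \cite{argyros:haydon:2011} (which again carry over), one extracts from a normalised block sequence whose image is weakly null and seminormalised a RIS on which $T$ is still bounded below, exploiting that a normalised block sequence of $\X$ admitting no RIS subsequence is forced into a regime of bounded weight whose image under an operator that is compact on every RIS cannot remain seminormalised.

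The main obstacle is exactly this weakly null case, namely the control of cancellation in the arbitrary target $Y$. A naive biorthogonality or gliding-hump argument on $\X^*$ will not do, because the Schauder basis $(d_\ga)_{\ga\in\Ga}$ is conditional, hence non-shrinking, so the witnessing functionals on $\X$ cannot be localised along the FDD; one is forced onto the RIS and basic-inequality estimates of HI theory, which is precisely why this proposition is stated here among the recollections of those techniques.
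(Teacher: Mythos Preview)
Your proposal contains a factual error and a genuine gap.

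The error: you assert that the basis $(d_\ga)_{\ga\in\Ga}$ is ``conditional, hence non-shrinking''. Conditionality does not imply non-shrinkingness; these are independent properties (any conditional basis of a reflexive space is shrinking, for instance). In fact the very next proposition in the paper states that this basis \emph{is} shrinking, with $\X^*\simeq\ell_1$. So the obstruction you identify to a gliding-hump argument is not the one you name.

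The gap: your case analysis via Rosenthal's theorem leaves the essential case unresolved. When $(Tx_k)$ has a weakly null subsequence, convex combinations of $(Tx_k)$ genuinely can have small norm, so your RIS $(y_\ell)$ built from $\ell_1$-averages need not satisfy $\inf_\ell\|Ty_\ell\|>0$. You acknowledge this and say one must fall back on ``the RIS and basic-inequality estimates of HI theory'', but you give no actual argument --- and this is precisely the substance of the proposition. (A smaller issue: an $\ell_1^n$-average in the Argyros--Haydon sense is not a convex combination $\sum a_kx_k$ with $\sum a_k=1$; it is a normalized vector $n^{-1}\sum x_{k_i}$ whose norm is bounded below by a constant, which is a genuinely different object.)

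The paper does not prove this statement directly but defers to \cite[Proposition~5.11]{argyros:haydon:2011}, noting that the proof carries over verbatim. That argument is intrinsic to the space and does not split on the behaviour of $(Tx_k)$ in the target $Y$ at all: it works entirely with $\ell_1$-averages and the basic inequality on the domain side. Your opening move --- extract $\ell_1$-averages of increasing length to obtain a RIS --- is exactly how the Argyros--Haydon proof begins, so your instinct is right; but the completion of the argument is not the Rosenthal dichotomy you propose.
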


\begin{prop}
\label{shrinking basis}
The basis $((d_\ga)_{\ga\in\De_n})_{n=1}^\infty$ of $\X$ is shrinking and in particular $\X^*$ is isomorphic to $\ell_1$.
\end{prop}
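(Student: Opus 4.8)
The plan is to invoke the classical characterization of shrinking bases: $((d_\ga)_{\ga\in\De_n})_{n=1}^\infty$ is shrinking if and only if every $f\in\X^*$ satisfies $\lim_kf(x_k)=0$ for every bounded block sequence $(x_k)_k$ in $\X$. So I would fix $f\in\X^*$ with $\|f\|\leq 1$ (the general case follows by scaling), view it as a bounded linear operator $f:\X\to\C$, and apply Proposition \ref{ris to zero compact} with $Y=\C$ and $T=f$: this reduces the problem to showing that $f(x_k)\to0$ for every RIS $(x_k)_k$ in $\X$.

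To prove the latter I would argue by contradiction. If it failed, there would be a RIS along which $f$ does not tend to $0$; passing to a subsequence (still a RIS, say a $C$-RIS) and relabelling, there are $\e>0$, $C>0$ and a $C$-RIS $(x_k)_k$ with $|f(x_k)|\geq\e$ for all $k$. Pick unimodular scalars $\theta_k$ with $\theta_kf(x_k)=|f(x_k)|$; since the three defining conditions of a $C$-RIS are invariant under multiplying the vectors by unimodular scalars, $(\theta_kx_k)_k$ is again a $C$-RIS. Choose $j_0\in\N$ with $10\,C\,m_{j_0}^{-1}<\e$ and apply to the initial segment $(\theta_kx_k)_{k=1}^{n_{j_0}}$ the standard norm estimate for averages of a RIS of length $n_{j_0}$, which carries over from \cite{argyros:haydon:2011}; it gives
\[\Big\|\frac{1}{n_{j_0}}\sum_{k=1}^{n_{j_0}}\theta_kx_k\Big\|\leq 10\,C\,m_{j_0}^{-1}<\e.\]
On the other hand $f\big(\tfrac{1}{n_{j_0}}\sum_{k=1}^{n_{j_0}}\theta_kx_k\big)=\tfrac{1}{n_{j_0}}\sum_{k=1}^{n_{j_0}}|f(x_k)|\geq\e$, and since $\|f\|\leq 1$ this contradicts the previous line. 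Hence $f(x_k)\to 0$ for every bounded block sequence, and as $f\in\X^*$ was arbitrary the basis is shrinking. I expect the only real obstacle to be having the RIS averaging estimate at hand in $\X$; but, as stressed in the introduction and in Section \ref{common concepts section}, the corresponding estimates of \cite{argyros:haydon:2011} (built on the auxiliary space and the basic inequality) transfer to the present construction with no essential change, so nothing genuinely new has to be proved here.

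For the final assertion I would note that, the basis being shrinking, $\X^*=\overline{\langle\{d^*_\ga:\ga\in\Ga\}\rangle}$ is separable. Since $\X$ is a separable $\mathscr{L}_\infty$-space, its dual is an $\mathscr{L}_1$-space; a separable dual space has the Radon--Nikod\'ym property, and by classical results on $\mathscr{L}_1$-spaces (Lewis--Stegall) a separable $\mathscr{L}_1$-space with the Radon--Nikod\'ym property is isomorphic to $\ell_1$. Thus $\X^*\cong\ell_1$; this last deduction is exactly as in \cite{argyros:haydon:2011}.
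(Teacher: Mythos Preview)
Your proposal is correct and follows essentially the same route as the paper, which does not give a self-contained argument but simply refers to \cite[Propositions 5.11 and 5.12]{argyros:haydon:2011} and explains why those proofs (built on the auxiliary space and the basic inequality) transfer verbatim to $\X$. You have spelled out the standard reduction---via Proposition~\ref{ris to zero compact} with $Y=\C$ and the RIS averaging estimate, followed by the Lewis--Stegall identification of the dual---in more detail than the paper does, but the content is identical.
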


\begin{proof}[Comment on proof]
These are \cite[Proposition 5.11 and Proposition 5.12, page 27]{argyros:haydon:2011}. The proofs are fundamental to the Argyros-Haydon construction and highly non-trivial. That being said, they translate almost verbatim to this paper. The unconvinced reader may retrace \cite[pages 19-28]{argyros:haydon:2011} (which also use \cite[Lemma 2.4, page 5]{argyros:haydon:2011}) with having only two things in mind.

In \cite{argyros:haydon:2011} the evaluation analysis of each $\gamma$ contains components of the form $b_r^*\circ P_{(s,\infty)}$, where $b_r^*$ is a convex combination of certain $e_\eta^*$. Here, in the same place these components are of the form $e_{\eta}^*\circ P_{I_r}$, where $I_r$ is a bounded interval of $\N$. This has the consequence that in several places intervals of the form $(s,\infty)$ need to be replaced with bounded intervals $I$. This does not cause any change in constants, because in \cite{argyros:haydon:2011}, $\|P_{(s,\infty)}\| \leq 3$ whereas here $\|P_I\|\leq 3$ (see Remark \ref{component norms}). This is due to the present choice $m_1 \geq 8$.

The second detail, is that in \cite{argyros:haydon:2011} there exists a unique $\gamma_0$ of weight zero, namely the member of $\De_1$. Here, there are infinitely many $\ga$ in $\Ga$ of weight zero. By their nature, these can be treated exactly as $\ga_0$ of \cite{argyros:haydon:2011}. This comes up in the proof of \cite[Proposition 5.4 (Basic Inequality), page 20]{argyros:haydon:2011}.
\end{proof}

Recall that in a space with a shrinking basis, all bounded block sequences are weakly null and thus a compact operator maps them to norm-null sequences. Therefore, the above two propositions immediately imply the next.

\begin{cor}
\label{ris compact char}
A bounded linear operator $T:\X\to\X$ is compact if and only if for every RIS $(x_k)_k$ in $\X$, $\lim_k\|Tx_k\| = 0$.
\end{cor}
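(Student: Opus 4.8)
The plan is to prove the two implications separately, essentially unpacking the sentence that precedes the statement. For the forward direction, suppose $T$ is compact and let $(x_k)_k$ be a RIS; this is in particular a bounded block sequence of $\X$. By Proposition~\ref{shrinking basis} the basis $((d_\ga)_{\ga\in\De_n})_n$ is shrinking, hence every bounded block sequence is weakly null: given $f\in\X^*$ and $\e>0$, the tail $f-P_{[1,N]}^*f$ has norm below $\e/\sup_k\|x_k\|$ once $N$ is large, while $P_{[1,N]}^*f$ annihilates every block vector whose support exceeds the first $N$ FDD-coordinates, so $|f(x_k)|<\e$ for all large $k$. Thus $x_k\to 0$ weakly, and a compact operator carries weakly null sequences to norm-null ones, so $\|Tx_k\|\to 0$.

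For the converse, assume $\|Tx_k\|\to 0$ for every RIS $(x_k)_k$. By Proposition~\ref{ris to zero compact}, applied with $Y=\X$, the same holds for \emph{every} bounded block sequence in $\X$. I would then deduce $\|TP_{(n,\infty)}\|\to 0$ (with $P_{(n,\infty)}=I-P_{[1,n]}$) by contraposition: if $\|TP_{(n,\infty)}\|>\delta$ for infinitely many $n$, a standard gliding-hump construction yields a bounded block sequence $(u_k)_k$ with $\|Tu_k\|>\delta/2$ for all $k$, contradicting the previous sentence. Concretely, at stage $k$ choose $n_k>\max\supp(u_{k-1})$ with $\|TP_{(n_k,\infty)}\|>\delta$ and a norm-one $x$ with $\|TP_{(n_k,\infty)}x\|>\delta$; put $y=P_{(n_k,\infty)}x$, which has $\min\supp(y)>n_k$ and $\|y\|\le 3$ by Remark~\ref{component norms}; and perturb $y$ to the finitely supported vector $u_k=P_{(n_k,p_k]}y$, with $p_k$ large enough that $\|y-u_k\|$ is small enough to keep $\|Tu_k\|>\delta/2$. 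Once $\|TP_{(n,\infty)}\|\to 0$ is in hand, $T=\lim_n TP_{[1,n]}$ in operator norm; since each $P_{[1,n]}=i_nr_{_{\Ga_n}}$ has finite rank ($\Ga_n$ being finite), $T$ is a norm limit of finite-rank operators and hence compact.

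I do not expect any genuine obstacle: the two cited propositions do the substantive work, and the forward implication is immediate from the shrinking basis. The only step carrying any content is the gliding-hump argument in the converse, and even that is entirely routine, resting only on the uniform boundedness of the FDD projections from Remark~\ref{component norms} and on the convergence of the finite partial-sum projections. One small point worth flagging is that compactness of $T$ here is extracted via norm-approximation by the finite-rank truncations $TP_{[1,n]}$ rather than via weak sequential arguments, which keeps the converse self-contained.
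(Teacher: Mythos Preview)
Your proof is correct and follows the same approach as the paper, which simply remarks that the corollary is immediate from Propositions~\ref{ris to zero compact} and~\ref{shrinking basis} together with the fact that in a space with a shrinking basis bounded block sequences are weakly null. You have spelled out the standard gliding-hump step for the converse, which the paper leaves implicit; there is nothing to correct.
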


\subsection{Exact pairs and dependent sequences}
These are highly specialized sequences of vectors that use RISs as their building blocks. Here, as in \cite{argyros:haydon:2011}, their main purpose is to extract the compact part of a bounded linear operator. In this section the definition of these objects is recalled and it is reminded how they can be constructed from RISs. Estimates of the norm of their linear combinations are also given.

\begin{dfn}
Let $C>0$, $j\in\N$, and $\e\in\{0,1\}$. A pair $(x,\eta)\in\X\times\Ga$ is called a {\em$(C,j,\e)$-exact pair} if
\begin{enumerate}[label=(\roman*)]

\item for all $\xi\in\Ga$, $|d_\xi^*(x)| \leq Cm^{-1}_j$,

\item $\weight(\eta)=m_j^{-1}$,

\item $\|x\|\leq C$ and $e_\eta^*(x) = \e$, and

\item for every $\eta'\in\Ga$ with $\weight(\eta') = m_i^{-1}\neq m_j^{-1}$,
\[
|e_{\eta'}^*(x)| \leq \left\{
	\begin{array}{ll}
	C m_i^{-1} & \mbox{if } m_i^{-1}>m_j^{-1},\\
	C m_j^{-1} & \mbox{if } m_i^{-1}<m_j^{-1}.
	\end{array}
\right.
\]

\end{enumerate}
\end{dfn}

The next lemma explains how to construct $(C,0)$-exact pairs in $\X$, which are necessary in the study of operators. On contrast, $(C,1)$-exact pairs are used to study the geometry of $\X$, e.g., to prove that $\X$ contains no unconditional sequences (see Section \ref{extra stuff}). The construction of $(C,0)$-exact pairs is done very similarly as in \cite{argyros:haydon:2011} and by applying Proposition \ref{gamma builder}.

A skipped block sequence $(x_k)_k$ in $\X$ is one for which $\max\supp(x_k) + 1 <\min\supp(x_{k+1})$, for all $k$.

\begin{lem}
\label{EP builder}
Let $j\in\N$, $C>0$, $(x_k)_{k=1}^{n_{2j}}$ be a skipped block $C$-RIS, $(q_k)_{k=1}^{n_{2j}}$ be a sequence of natural numbers, $(I_k)_{k=1}^{n_{2j}}$ be finite intervals of $\N$, $\eta_k\in\Ga$ and $\la_k\in\D_{q_k}$, for $k=1,\ldots,n_{2j}$, and $\kappa_0\in K$. Assume that the following are satisfied.
\begin{enumerate}[leftmargin=21pt,label=(\roman*)]

\item $2j\leq q_1$, $\supp(x_1)\cup I_1\subset[1,q_1)$, and $\supp(x_k)\cup I_k\subset (q_{k-1},q_k)$ , for $k=2,\ldots,n_{2j}$.

\item $\rank(\eta_k)<q_k$, for $k=1,\ldots,n_{2j}$,

\item $\weight(I_k,\eta_k) \leq 2^{-q_{k-1}}$, for $k=2,\ldots,n_{2j}$.

\item $\varrho(\kappa(\eta_k),\kappa_0) \leq 2^{-(q_{k-1}+1)}$, for $k=2,\ldots,n_{2j}$.

\item $e_{\eta_k}^*(P_{I_k}x_k) = 0$, for $k=1,\ldots,n_{2j}$.

\end{enumerate}
Denote
\[z = \frac{m_{2j}}{n_{2j}}\sum_{k=1}^{n_{2j}}x_k.\]

Then, there exists $\ga\in\Ga$ with $\rank(\ga) = q_{n_{2j}}$, $\weight(\ga) = m_{2j}^{-1}$, and $\varrho(\kappa(\ga),\kappa_0) \leq 2^{-q_{n_{2j}}}$ such that $(z,\ga)$ is a   $(16C,2j,0)$-exact pair. Furthermore, there exist $\xi_k\in\De_{q_k}$, for $k=1,\ldots,n_{2j}$, such that $e_\ga^*$ has an evaluation analysis
\[e_\ga^* = \sum_{k=1}^{n_{2j}}d_{\xi_k}^* + \frac{1}{m_{2j}}\sum_{k=1}^{n_{2j}}\la_ke_{\eta_k}^*\circ P_{I_k}.\]
\end{lem}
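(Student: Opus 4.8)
The plan is to mimic the construction of exact pairs from \cite{argyros:haydon:2011}, using Proposition~\ref{gamma builder} to manufacture the needed $\ga\in\Ga$ and then checking the four defining conditions of a $(16C,2j,0)$-exact pair directly from the evaluation analysis of $\ga$. First I would apply Proposition~\ref{gamma builder} to the data $(q_k)$, $(I_k)$, $(\eta_k)$, $(\la_k)$, $\kappa_0$; hypotheses (i)--(iv) of the lemma are precisely hypotheses (i)--(iv) of that proposition (with the factor $2^{-(q_{k-1}+1)}$ in (iv) even a bit stronger than needed), so we obtain $\ga\in\Ga$ with $\weight(\ga)=m_{2j}^{-1}$, $\rank(\ga)=q_{n_{2j}}$, $\varrho(\kappa(\ga),\kappa_0)\le 2^{-q_{n_{2j}}}$, together with $\xi_k\in\De_{q_k}$ and the stated evaluation analysis
\[
e_\ga^* = \sum_{k=1}^{n_{2j}}d_{\xi_k}^* + \frac{1}{m_{2j}}\sum_{k=1}^{n_{2j}}\la_k e_{\eta_k}^*\circ P_{I_k}.
\]
This settles the ``Furthermore'' part and conditions (ii) (the weight) immediately. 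It remains to verify conditions (i), (iii), (iv) for the pair $(z,\ga)$ with $z=\frac{m_{2j}}{n_{2j}}\sum_k x_k$.

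\textbf{Norm and the value $e_\ga^*(z)=0$.} For $\|z\|\le 16C$ I would invoke the standard RIS-averaging estimate: a $C$-RIS of length $n_{2j}$ has $\big\|\frac{1}{n_{2j}}\sum_k x_k\big\|\le \frac{c\,C}{m_{2j}}$ for an absolute constant (this is the commented-out proposition in the excerpt, with constant $10$, hence $c=10$ suffices and $10\le 16$), so $\|z\|\le 16C$. The equality $e_\ga^*(z)=0$ is where hypothesis (v) enters: plugging $z$ into the evaluation analysis,
\[
e_\ga^*(z) = \frac{m_{2j}}{n_{2j}}\Big(\sum_{k}\sum_{\ell} d_{\xi_k}^*(x_\ell) + \frac{1}{m_{2j}}\sum_{k}\sum_{\ell}\la_k\, e_{\eta_k}^*(P_{I_k}x_\ell)\Big),
\]
and one argues that each double sum vanishes. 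For the second sum the diagonal terms $k=\ell$ are zero by hypothesis (v), and the off-diagonal terms are zero because $\supp(x_\ell)$ is separated from $I_k$ by the interval conditions in (i) (if $\ell\neq k$ then $\range(x_\ell)\cap I_k=\varnothing$ since $I_k\subset(q_{k-1},q_k)$ while $x_\ell$ lives outside $(q_{k-1},q_k)$). For the first sum one uses $d_{\xi_k}^*(x_\ell)=e_{\xi_k}^*\circ P_{[\rank(\xi_k),\infty)}(x_\ell)$ (Remark~\ref{component norms}) together with $\rank(\xi_k)=q_k$ and the support separation, plus the skipped-block hypothesis to kill the one potentially surviving index; this is the standard computation and I would reproduce it following \cite[Section~6]{argyros:haydon:2011} rather than belabor it.

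\textbf{Conditions (i) and (iv): controlling $d_\xi^*(z)$ and $e_{\eta'}^*(z)$.} These are the genuinely technical estimates but they are, again, verbatim the Argyros--Haydon exact-pair lemma. For (i), one writes $d_\xi^*(z)=\frac{m_{2j}}{n_{2j}}\sum_k d_\xi^*(x_k)$; by the RIS definition and the support structure at most boundedly many $k$ contribute (the interval $\range(d_\xi)=[\rank(\xi),\infty)$ meets boundedly many $\range(x_k)$ after the skipped-block reduction), giving $|d_\xi^*(z)|\le 16C m_{2j}^{-1}$. For (iv), one takes $\eta'\in\Ga$ with $\weight(\eta')=m_i^{-1}\neq m_{2j}^{-1}$ and expands $e_{\eta'}^*(z)$ using the evaluation analysis of $\eta'$ (Proposition~\ref{X_K evaluation analysis}): splitting according to whether $m_i^{-1}>m_{2j}^{-1}$ or $m_i^{-1}<m_{2j}^{-1}$, the RIS property (iii) in the RIS definition handles the large-weight case directly ($|e_{\eta'}^*(x_k)|\le Cm_i^{-1}$ once $m_i^{-1}>m_{j_k}^{-1}$, and the finitely many exceptional $k$ contribute a term of order $m_i^{-1}$ after division by $n_{2j}$), while the small-weight case $m_i^{-1}<m_{2j}^{-1}$ is where one needs the tree-estimate / basic-inequality machinery to bound the sum by $16C m_{2j}^{-1}$. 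The main obstacle is precisely this last estimate: one must control $e_{\eta'}^*$ of an average of a RIS when $\eta'$ has much smaller weight, and this requires the ``RIS vectors behave like a single vector of comparable norm'' principle encapsulated in the basic inequality of \cite[Section~5]{argyros:haydon:2011}. Since, as the excerpt repeatedly stresses, those estimates transfer verbatim (the only changes being $P_{(s,\infty)}\rightsquigarrow P_I$ with $\|P_I\|\le 3$, and the presence of many zero-weight $\ga$'s, neither of which affects constants), I would state the estimate, cite the corresponding lemma in \cite{argyros:haydon:2011}, and note that the constant $16$ is chosen to absorb all accumulated factors. Assembling (i)--(iv) and (ii) completes the verification that $(z,\ga)$ is a $(16C,2j,0)$-exact pair.
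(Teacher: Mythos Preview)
Your proposal is correct and follows essentially the same approach as the paper: apply Proposition~\ref{gamma builder} to produce $\ga$ with the required weight, rank, proximity to $\kappa_0$, and evaluation analysis, and then verify the exact-pair conditions by invoking \cite[Lemma~6.2]{argyros:haydon:2011}. The paper's proof is in fact a two-line citation to exactly these two ingredients, whereas you have unpacked what the Argyros--Haydon reference entails (the RIS-average norm bound, the vanishing of $e_\ga^*(z)$ via hypothesis~(v) and support separation, and the basic-inequality estimates for conditions (i) and (iv)); this is the intended argument.

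One small remark: in your computation of $e_\ga^*(z)=0$, the term $\sum_{k,\ell} d_{\xi_k}^*(x_\ell)$ already vanishes entirely from the support conditions in hypothesis~(i), since $\rank(\xi_k)=q_k$ never lies in any $\supp(x_\ell)\subset(q_{\ell-1},q_\ell)$; the skipped-block assumption is not needed for this particular step (it is used in the Argyros--Haydon framework more generally, and in the present lemma is subsumed by the explicit interval separation in (i)).
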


\begin{proof}
Apply Proposition \ref{gamma builder} to find $\gamma$ with the desired weight, proximity to $\kappa_0$, and evaluation analysis. The proof that $(z,\gamma)$ is a $(16C,2j,0)$-exact pair is identical to the proof of \cite[Lemma 6.2, page 29]{argyros:haydon:2011}.
\end{proof}

Dependent sequences comprise exact pairs. They are chosen inductively with the help of the coding function $\sigma$. 

\begin{dfn}
Let $C>0$, $j_0\in\N$, and  $\e\in\{0,1\}$. A sequence $(x_i)_{i=1}^{n_{2j_0}-1}$ in $\X$ is called a {\em$(C,2j_0-1,\e)$-dependent sequence} if there exists a $\ga\in\Ga$ with $\weight(\ga) = m_{2j_0}^{-1}$ and evaluation analysis
\[
e_\ga^* = \sum_{i=1}^{n_{2j_0-1}}d_{\xi_i}^* + \frac{1}{m_{2j_0-1}}\sum_{i=1}^{n_{2j_0-1}} e_{\eta_i}^*\circ P_{I_i}
\]
such that $\range(x_i)\subset I_i$, for $i=1,\ldots,n_{2j_0-1}$, denoting $\weight(\eta_1) = m_{4j_1-2}$ then $(x_1,\eta_1)$ is a $(C,4j_1-2,\e)$-exact pair, and denoting $\weight(\eta_i) = m_{4j_i}$, for $i=2,\ldots,n_{2j_0-1}$ then $(x_i,\ga_i)$ is a $(C,4j_i,\e)$-exact pair,  for $i=2,\ldots,n_{2j_0-1}$.
\end{dfn}

Note that in the construction of $\ga$, after each step $i$, ${\eta_{i+1}}$ is only allowed to have the weight $m^{-1}_{4\sigma(\xi_i)}$. Therefore, the exact pair $(x_{i+1},\eta_{i+1})$ needs to be built after $\xi_i$.

It is straightforward to check that the average of the terms of a $(C,2j_0-1,1)$-dependent sequence have norm at least $m_{2j_0-1}^{-1}$. However, for $(C,2j_0-1,0)$-dependent sequence the outcome is much smaller.

\begin{prop}
\label{zero dependent sequence bound}
Let $C>0$, $j_0\in\N$, and $(x_i)_{i=1}^{n_{2j_0}-1}$ be a $(C,2j_0-1,0)$-dependent sequence in $\X$. Then,
\[\Big\|n_{2j_0-1}^{-1}\sum_{i=1}^{n_{2j_0-1}}x_i\Big\| \leq 30Cm_{2j_0-1}^{-2}.\]
\end{prop}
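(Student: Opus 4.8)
The plan is to estimate the Bourgain--Delbaen norm of $x = n_{2j_0-1}^{-1}\sum_{i=1}^{n_{2j_0-1}} x_i$ by bounding $|e_\ga^*(x)|$ uniformly over $\ga\in\Ga$, following the corresponding estimate in \cite{argyros:haydon:2011}. First I would record that $(x_i)_i$ is itself a $C'$-RIS with $C'$ a fixed multiple of $C$: each $x_i$ is the vector part of a $(C,4j_i,0)$-exact pair (a $(C,4j_1-2,0)$-exact pair for $i=1$), the weights $m_{4j_i}^{-1}$ decrease super-exponentially because the recipe forces $j_{i+1} = \sigma(\xi_i)$ with $m_{4\sigma(\xi_i)} > 2^{\rank(\xi_i)}$, and conditions (i), (iii), (iv) in the definition of an exact pair translate directly into the defining conditions of a RIS. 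Consequently the basic-inequality/auxiliary-space machinery of \cite{argyros:haydon:2011} is available, with the cosmetic modifications already noted in the comment on the proof of Proposition \ref{shrinking basis} (components $b_r^*\circ P_{(s,\infty)}$ replaced by $e_\eta^*\circ P_{I_r}$, and weight-zero nodes now forming a whole family rather than a single $\ga_0$).

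Next I would split according to $\weight(\ga)$. If $\weight(\ga) = 0$ the diagonal functional $d_\ga^*$ meets at most one $x_i$ and, by part (i) of the exact-pair definition, $|d_\ga^*(x_i)| \leq Cm_{4j_i}^{-1} \leq Cm_{2j_0-1}^{-2}$, which after division by $n_{2j_0-1}$ is more than enough. If $\weight(\ga) = m_i^{-1}$ with $i \geq 2j_0$, then $m_i^{-1} \leq m_{2j_0}^{-1} \leq m_{2j_0-1}^{-2}$ by $m_{j+1}\geq m_j^2$, and the basic inequality bounds $|e_\ga^*(x)|$ by a universal constant times $Cm_i^{-1}$. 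If $\weight(\ga) = m_i^{-1}$ with $i < 2j_0-1$, the evaluation tree of $\ga$ branches at most $n_i$-fold, which is dwarfed by $n_{2j_0-1} \geq (16n_{2j_0-2})^{\log_2 m_{2j_0-1}}$, so the auxiliary estimate again yields something far below $Cm_{2j_0-1}^{-2}$. None of these sub-cases involves the conditional structure and each is transcribed from \cite{argyros:haydon:2011}.

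The decisive case, where the hypothesis $\e=0$ and the coding function $\sigma$ are used, is $\weight(\ga) = m_{2j_0-1}^{-1}$. I would take the evaluation analysis $e_\ga^* = \sum_r d_{\xi'_r}^* + m_{2j_0-1}^{-1}\sum_r \la_r e_{\eta'_r}^*\circ P_{I'_r}$ from Proposition \ref{X_K evaluation analysis} and compare it with the analysis $e_{\ga_0}^* = \sum_i d_{\xi_i}^* + m_{2j_0-1}^{-1}\sum_i e_{\eta_i}^*\circ P_{I_i}$ of the witness $\ga_0$ of the dependent sequence. Because $\ga$ is a special functional, for $r\geq 2$ the weight $\weight(\eta'_r)$ equals the prescribed value $m_{4\sigma(\xi'_{r-1})}^{-1}$, and likewise $\weight(\eta_i) = m_{4\sigma(\xi_{i-1})}^{-1}$; injectivity of $\sigma$ together with $m_{4\sigma(\,\cdot\,)} > 2^{\rank(\,\cdot\,)}$ forces that there is at most one index $i_0$ for which a branch of $e_\ga^*$ shares both its weight and its supporting data with $\eta_{i_0}$, and for that index the $\e=0$ condition gives $e_{\eta_{i_0}}^*(x_{i_0}) = 0$, so the would-be main term is annihilated. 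For every other overlapping pair $(\eta'_r,x_i)$ one has $\weight(\eta'_r)\neq\weight(\eta_i)$, so part (iv) of the exact-pair definition bounds $|e_{\eta'_r}^*(P_{I'_r}x_i)|$ by $C\max\{\weight(\eta'_r),\weight(\eta_i)\}$, and since both weight families decrease super-exponentially these contributions sum to $O(C)$; the diagonal part $\sum_r d_{\xi'_r}^*$ is controlled the same way via part (i). Assembling these estimates and dividing by $n_{2j_0-1}$ leaves one factor $m_{2j_0-1}^{-1}$ from the outer coefficient of $e_\ga^*$ and one further factor gained from the cancellation and summability, which with the explicit constants of \cite{argyros:haydon:2011} comes to at most $30Cm_{2j_0-1}^{-2}$.

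The main obstacle is exactly this last step: comparing the two evaluation analyses rigorously and tracking every error term through the basic inequality is the technical core, and rather than reproducing several pages of \cite{argyros:haydon:2011} I would present it as a careful transcription of the analogous dependent-sequence estimate there, flagging only the two structural differences mentioned above. The RIS identification of the first paragraph and the non-resonant cases of the second are routine.
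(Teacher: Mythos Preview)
Your approach is exactly the paper's: it simply cites \cite[Proposition 6.6]{argyros:haydon:2011}, and your sketch outlines that argument with the right overall architecture (RIS identification, case split on $\weight(\ga)$, basic inequality for non-resonant weights, coding-function comparison for $\weight(\ga)=m_{2j_0-1}^{-1}$).

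One structural point in your critical-case sketch is off and worth correcting before you transcribe. It is not that there is ``at most one index $i_0$'' where the evaluation analysis of $\ga$ agrees with that of the witness $\ga_0$; rather, by the tree-like property of odd-weight nodes (each $\xi_r$ encodes its predecessor), the two analyses share a possibly long common \emph{initial segment} $\xi'_1=\xi_1,\ldots,\xi'_{i_0}=\xi_{i_0}$, on which $\eta'_r=\eta_r$ and $I'_r=I_r$ exactly, and it is on this whole segment that the $\e=0$ condition gives $e_{\eta_r}^*(x_r)=0$. At the first index after the fork, $\weight(\eta'_{i_0+1})=\weight(\eta_{i_0+1})$ but $\eta'_{i_0+1}$ need not equal $\eta_{i_0+1}$, so $\e=0$ does \emph{not} apply there; this single term is simply absorbed as $O(C/(m_{2j_0-1}n_{2j_0-1}))$. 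Beyond the fork all weights differ and part (iv) of the exact-pair definition takes over, as you say. With this correction your transcription plan is sound.
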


\begin{proof}
This is proved identically to \cite[Proposition 6.6, pages 30-31]{argyros:haydon:2011}. 
\end{proof}

\section{Bounded linear operators on $\X$}
\label{operators section}
In this section the common HI concepts are combined with the weight and metric constraints to prove Theorem \ref{eventual continuity} and Theorem \ref{compact zero diagonal}.

\subsection{Non-vanishing estimates of very fast growing sequences}
The following states that any block sequence whose norm is bounded from below admits non-vanishing estimates by functions of the form $e^*_\ga\circ P_I$ with $\weight(I,\ga)$ tending to zero. This is necessary to be able to construct zero dependent sequences. This process has its roots in \cite{argyros:motakis:2020}.

\begin{prop}
\label{vfg estimates}
Any block sequence $(x_k)_k$ in $\X$ with $\liminf_k\|x_k\|>0$ has a subseqeuence, again denoted by $(x_k)_k$, with the following property. For each $k\in\N$ there exist an interval $I_k$ or $\range(x_k)$ and $\eta_k\in\Ga$ such that
\[\lim_k\weight(I_k,\eta_k) = 0\text{ and }\liminf_k\big|e^*_{\eta_k}\big(P_{I_k}x_k\big)\big|>0.\]
\end{prop}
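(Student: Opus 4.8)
The plan is to produce, for each $x_k$, a single weighted functional $e^*_{\eta_k}\circ P_{I_k}$ with $I_k\subseteq\range(x_k)$ and $|e^*_{\eta_k}(P_{I_k}x_k)|$ bounded below, while forcing $\weight(I_k,\eta_k)\to 0$. The natural starting point is that, since $\|x_k\|>\delta$ for all $k$ (after normalizing $\liminf_k\|x_k\| = 2\delta$ say), there is for each $k$ some $\zeta_k\in\Ga$ with $|e^*_{\zeta_k}(x_k)|>\delta$. If infinitely many of these $\zeta_k$ have weight zero, then $e^*_{\zeta_k}$ is (up to a unimodular scalar and restriction to $\range(x_k)$) a functional of the form $d^*_{\zeta_k}$, which has $\weight(\range(x_k),\zeta_k) = 0$, and we are trivially done along that subsequence. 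So the substantive case is that all $\zeta_k$ have nonzero weight $m_{i_k}^{-1}$.

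Now I would pass to a subsequence to split into two cases according to whether $(i_k)_k$ is bounded or not. \emph{Case 1: $i_k\to\infty$.} Then take $\eta_k = \zeta_k$ and $I_k = \range(x_k)$; since $\rank(\eta_k)$ may or may not exceed $\min(I_k)$, set $\weight(I_k,\eta_k)$ is either $0$ or $m_{i_k}^{-1}\to 0$, and $|e^*_{\eta_k}(P_{I_k}x_k)| = |e^*_{\zeta_k}(x_k)|>\delta$, finishing this case. \emph{Case 2: $i_k = i_0$ for all $k$ (fixed weight $m_{i_0}^{-1}$).} This is the hard case and it is where the very-fast-growing constraint machinery of \cite{argyros:motakis:2020} must be invoked. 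The idea is to look at the evaluation analysis of each $\zeta_k$,
\[
e^*_{\zeta_k} = \sum_{r=1}^{a_k} d^*_{\xi^k_r} + \frac{1}{m_{i_0}}\sum_{r=1}^{a_k}\lambda^k_r\, e^*_{\eta^k_r}\circ P_{I^k_r},
\]
apply $e^*_{\zeta_k}$ to $x_k$, and argue that since the total mass $|e^*_{\zeta_k}(x_k)|$ exceeds $\delta$ but $d^*_{\xi^k_r}(x_k)$ are small (one would first pass to a RIS-like subsequence, using Example \ref{basis RIS} and the RIS-stability remark, so that $|d^*_\xi(x_k)|$ is uniformly small for all $\xi$ — this is essentially property (i) of exact pairs and is available because a normalized block sequence can be refined to a RIS by Proposition \ref{ris to zero compact}'s circle of ideas), the bulk of the mass comes from the weighted part. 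Then at least one summand $\frac{1}{m_{i_0}}|\lambda^k_r e^*_{\eta^k_r}(P_{I^k_r}x_k)|$ is non-negligible; moreover, among the $\eta^k_r$ appearing, by the evaluation analysis inequalities \eqref{X_K evaluation analysis1}–\eqref{X_K evaluation analysis3} the weights $\weight(I^k_r,\eta^k_r)$ are controlled by $2^{-\rank(\xi^k_{r-1})}$, which can be made small for the later-indexed $r$. One iterates (descending through the tree of evaluation analyses, replacing $\zeta_k$ by whichever $\eta^k_r$ carries substantial mass) until one lands on a functional $\eta_k$ with $\weight(I_k,\eta_k)$ as small as desired; the iteration terminates because weights strictly decrease at each descent along the analysis tree, and the mass captured stays bounded below by a fixed fraction of $\delta$ (paying a factor $m_{i_0}^{-1}$ at most finitely often, controlled since $m_j\to\infty$). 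A diagonal argument over $k$ then yields the subsequence with $\weight(I_k,\eta_k)\to 0$ and $\liminf_k|e^*_{\eta_k}(P_{I_k}x_k)|\geq \delta' > 0$.

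The main obstacle I anticipate is the bookkeeping in Case 2: one must ensure that the mass does not leak away entirely into the $d^*_{\xi^k_r}$ terms or get diluted across too many summands of the $(m_{i_0}^{-1},\mathcal{A}_{n_{i_0}})$-operation. The clean way to handle this is to first refine $(x_k)_k$ to a RIS with a rapidly increasing parameter sequence, so that for each $k$ the functional $e^*_{\zeta_k}$ "sees" $x_k$ essentially at one scale — this is exactly the role the very-fast-growing / essentially-rapidly-converging-supports constraint plays, and it forces the relevant weighted part of the analysis to localize. One should also be careful that passing to $\eta^k_r$ keeps $I^k_r\cap\range(x_k)$ nonempty and that restricting $P_{I^k_r}$ to $\range(x_k)$ does not destroy the lower bound; this is where the skipped-block structure and the freedom to intersect intervals (as in the proof of Proposition \ref{lip diag bounded}) are used. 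Apart from this localization step, the remaining estimates are routine geometric-series bounds of the kind already carried out in Lemma \ref{telescoping}.
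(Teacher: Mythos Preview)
Your case split is right, and Case 1 is fine. In Case 2, however, there is a genuine gap stemming from a misreading of the role of the $d^*_{\xi_r}$ terms. You propose to first refine $(x_k)_k$ to a RIS so that $|d^*_\xi(x_k)|$ is uniformly small, and then argue that the weighted part of the evaluation analysis carries the mass. But (a) RIS membership does not give any control on $|d^*_\xi(x_k)|$ (consider $x_k = d_{\ga_k}$, which is a RIS yet has $|d^*_{\ga_k}(d_{\ga_k})| = 1$), and (b) more importantly, the $d^*_{\xi_r}$ terms are not obstacles at all --- they are \emph{acceptable outputs}. Indeed $d^*_{\xi_r} = e^*_{\xi_r}\circ P_{\{\rank(\xi_r)\}}$ and by definition $\weight(\{\rank(\xi_r)\},\xi_r) = 0$. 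So if one of these terms happens to be large, you are immediately done with weight zero.

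Once you see this, no iteration is needed either. In the evaluation analysis of $\zeta_k$ restricted to $\range(x_k)$, every summand except the single term $\tfrac{1}{m_{i_0}}\la_{r_0} e^*_{\eta_{r_0}}\circ P_{I_{r_0}\cap\range(x_k)}$ already has $\weight(\cdot,\cdot)\leq 2^{-\min\supp(x_k)}$: the $d^*_{\xi_r}$ terms have weight zero, and the $r>r_0$ terms have $\weight(I_r,\eta_r)\leq 2^{-\rank(\xi_{r-1})}\leq 2^{-\min\supp(x_k)}$ by \eqref{X_K evaluation analysis1}. The exceptional $r_0$ term contributes at most $3\|x_k\|/m_{i_0}$ to $|e^*_{\zeta_k}(x_k)|$, which (taking $|e^*_{\zeta_k}(x_k)|\geq \tfrac{3}{4}\|x_k\|$ and $m_{i_0}\geq 8$) leaves a fixed positive fraction of $\|x_k\|$ to be shared among at most $2n_{i_0}$ small-weight terms. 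Pigeonhole then gives a single $(\eta_k,I_k)$ with $|e^*_{\eta_k}(P_{I_k}x_k)|\geq \|x_k\|/(8n_{i_0})$ and $\weight(I_k,\eta_k)\leq 2^{-\min\supp(x_k)}\to 0$. This is exactly the content of Lemma~\ref{self fulfilling} in the paper, and the proposition follows in one step. Your proposed descent through the tree is both unnecessary and, as stated, has a faulty termination argument: weights do \emph{not} strictly decrease along the analysis (the first $\eta$ in an even-weight $\gamma$ of age one is unconstrained); only ranks decrease, and the resulting number of steps depends on $k$, which would erode your lower bound.
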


The following lemma is the main quantitative argument required in the proof of the above proposition.

\begin{lem}
\label{self fulfilling}
Let $x\in\X$ and $\ga\in\Ga$ with $\weight(\ga) = m_j^{-1}$ such that $|e_\ga^*(x)|\geq(3/4)\|x\|$. Then there exist $\eta\in\Ga$ and a finite interval $I$ of $\range(x)$ with
\[\weight(I,\eta) \leq 2^{-\min\supp(x)}\text{ and }\big|e_{\eta}^*(P_I x)\big| \geq \frac{1}{8n_j}\|x\|.\]
\end{lem}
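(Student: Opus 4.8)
The plan is to run a pigeonhole argument on the evaluation analysis of $\ga$ provided by Proposition \ref{X_K evaluation analysis}. Assume $x\neq 0$ (otherwise there is nothing to prove), write $\weight(\ga)=m_j^{-1}$, $\age(\ga)=a\le n_j$, and expand
\[
e_\ga^* = \sum_{r=1}^{a} d_{\xi_r}^* + \frac{1}{m_j}\sum_{r=1}^{a}\la_r e_{\eta_r}^*\circ P_{I_r},
\]
so that $e_\ga^*(x)$ is a sum of $2a\le 2n_j$ scalars. First I would discard the terms that vanish on $x$: since $d_{\xi_r}^*(x)\neq 0$ forces $\rank(\xi_r)\in\supp(x)$, and $e_{\eta_r}^*(P_{I_r}x)\neq 0$ forces $I_r\cap\supp(x)\neq\emptyset$ together with $I_r\subset[1,\rank(\xi_r))$, a term can be nonzero only when the rank appearing in it exceeds $\min\supp(x)$. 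Moreover, replacing each occurring interval by its intersection with $\range(x)$ does not change the value on $x$ (as $P_I x = P_{I\cap\range(x)}x$) and does not increase the associated weight $\weight(I,\eta)$, so every retained functional already has the form $e_\eta^*\circ P_I$ with $I\subseteq\range(x)$ finite.

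The key point I would establish next is that among the surviving terms at most one fails the weight requirement. For a diagonal term, $d_{\xi_r}^* = e_{\xi_r}^*\circ P_{\{\rank(\xi_r)\}}$ and $\weight(\{\rank(\xi_r)\},\xi_r)=0$ by \eqref{new weight}. For a surviving weighted term with $r\ge 2$ and $\rank(\xi_{r-1})\ge\min\supp(x)$, estimate \eqref{X_K evaluation analysis1} gives $\weight(I_r\cap\range(x),\eta_r)\le\weight(I_r,\eta_r)\le 2^{-\rank(\xi_{r-1})}\le 2^{-\min\supp(x)}$. The remaining possibility is a weighted term with $\rank(\xi_{r-1})<\min\supp(x)$ (with the convention $\rank(\xi_0)=0$, this includes $r=1$); if such a term is nonzero on $x$ then $\min\supp(x)$ lies in the interval $(\rank(\xi_{r-1}),\rank(\xi_r))$, and since these intervals are pairwise disjoint there is at most one such index $r_0$. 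This single ``bad'' term is automatically negligible: its modulus is at most $\tfrac{1}{m_j}\|e_{\eta_{r_0}}^*\|\,\|P_{I_{r_0}}\|\,\|x\|\le \tfrac{3}{m_j}\|x\|\le\tfrac38\|x\|$, using $\|P_I\|\le 3$ (Remark \ref{component norms}), $|\la_{r_0}|\le 1$, and $m_j\ge m_1\ge 8$.

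Putting these together, the sum of the good functionals evaluated at $x$ has modulus at least $|e_\ga^*(x)|-\tfrac38\|x\|\ge\tfrac34\|x\|-\tfrac38\|x\|=\tfrac38\|x\|$, and there are at most $2n_j$ of them, so one good functional $T_s$ satisfies $|T_s(x)|\ge\tfrac{3}{16n_j}\|x\|\ge\tfrac1{8n_j}\|x\|$. To conclude I would read off $(\eta,I)$ from $T_s$: if $T_s=d_{\xi_s}^*$, take $\eta=\xi_s$ and the singleton $I=\{\rank(\xi_s)\}\subseteq\range(x)$, so $\weight(I,\eta)=0$ and $|e_\eta^*(P_I x)|=|d_{\xi_s}^*(x)|\ge\tfrac1{8n_j}\|x\|$; if $T_s=\tfrac1{m_j}\la_s e_{\eta_s}^*\circ P_{I_s}$, take $\eta=\eta_s$ and $I=I_s\cap\range(x)$, for which $\weight(I,\eta)\le 2^{-\min\supp(x)}$ by the analysis above and $|e_{\eta_s}^*(P_I x)|\ge|T_s(x)|\ge\tfrac1{8n_j}\|x\|$ because $|\la_s/m_j|\le 1$.

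The delicate step is the middle one: correctly singling out the unique ``bad'' weighted term and verifying that every other surviving functional meets the bound $\weight(I,\eta)\le 2^{-\min\supp(x)}$ after truncation to $\range(x)$. This rests on the weight estimate \eqref{X_K evaluation analysis1} built into the evaluation analysis (which in turn encodes the constraints imposed in $\De_{n+1}^{\text{\ref{age nonzero}}}$), and on the normalization $m_1\ge 8$, which is precisely the slack needed to push the one uncontrolled term below the pigeonhole threshold $\tfrac38\|x\|$.
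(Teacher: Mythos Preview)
Your proof is correct and follows essentially the same approach as the paper's: both expand $e_\ga^*$ via the evaluation analysis, isolate the single ``bad'' weighted term at the index $r_0$ where $\min\supp(x)$ first falls into $(\rank(\xi_{r_0-1}),\rank(\xi_{r_0}))$, bound it by $O(m_j^{-1})\|x\|$, and then pigeonhole over the remaining $\le 2n_j$ terms (each of which satisfies the weight constraint either by \eqref{new weight} or by \eqref{X_K evaluation analysis1}). The only cosmetic difference is that the paper phrases the final step as a proof by contradiction rather than a direct pigeonhole count.
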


\begin{proof}
Apply Proposition \ref{X_K evaluation analysis} to write
\begin{align*}
e_\ga^* &= \sum_{r=1}^ad_{\xi_r}^* + \frac{1}{m_j}\sum_{r=1}^a\la_re_{\eta_r}^*\circ P_{I_r}.
\end{align*}
Put $J =\range(x)$, $r_0 = \min\{r:\rank(\xi_r) \geq \min(J)\}$ and for $r= r_0,\ldots,a$, $I_r' = I_r\cap J$. Recall that $a\leq n_j$. Assuming that the conclusion is false,
\begin{align*}
\frac{3}{4}\|x\| &\leq |e_\ga^*(x)| = \Big|\sum_{r=1}^ae_{\xi_r}^*(P_{\{\rank(\xi_r)\}}x) + \frac{1}{m_j}\sum_{r= r_0}^ae_{\eta_r}^*\big(P_{I'_r}x\big)\Big|\\
&\leq \frac{1}{m_j}\big|e_{\eta_{r_0}}^*\big(P_{I'_{r_0}}x\big)\big| + \sum_{r=1}^a\big|e_{\xi_r}^*(P_{\{\rank(\xi_r)\}}x)\big| + \frac{1}{m_j}\sum_{r= r_0+1}^a\big|e_{\eta_r}^*\big(P_{I'_r}x\big)\big|.
\end{align*}
By definition, for $r=r_0,\ldots,a$, $\weight(\{\rank(\xi_r)\},\xi_r) = 0$. Furthermore, the growth condition on weights in the sets $\De_n^{\text{\ref{age nonzero}}}$ yields that for $r=r_0+1,\ldots,a$, $\weight(I_r,\eta_r) \leq 2^{-\rank{\xi_{r_0}}} \leq 2^{-\min\supp(x)}$. If ones supposes that the conclusion fails, it would follow that
\[\frac{3}{4}\|x\| < \frac{4\|x\|}{m_j} + n_j\frac{1}{8n_j}\|x\| + \frac{n_j}{m_j}\frac{1}{8n_j}\|x\| \leq \Big(4\frac{1}{m_1} + \frac{1}{4}\Big)\|x\|,\]
i.e., $m_1 < 8$ which is absurd.
\end{proof}

\begin{proof}[Proof of Proposition \ref{vfg estimates}]
For each $k\in\N$, pick $\ga_k\in\Ga$ with $|e_{\ga_k}^*(x_k)|\geq (3/4)\|x_k\|$. Distinguish two cases. If $\lim_k\weight(\range(x_k),\ga_k) = 0$ then the proof is complete. Note that this also includes the case in which, for all $k\in\N$, $\min\range(x_k) = \rank(\ga_k)$ and thus $\weight(\range(x_k),\ga_k) = 0$. Otherwise, by passing to an infinite subsequence and relabeling, there exists $j_0\in\N$ so that for all $k\in\N$, $\weight(\ga_k) = m^{-1}_{j_0}$. Apply Lemma \ref{self fulfilling} to find for each $k\in\N$ a finite interval $I_k$ of $\range(x_k)$ and $\eta_k\in\Ga$ with $\weight(I_k,\eta_k)\leq 2^{-\min\supp(x_k)}$ and $|e_{\eta_k}^*(P_{I_k}x_k)| \geq 1/(8n_{j_0})$.
\end{proof}

\subsection{Linear transformations of rapidly increasing sequences}
This section deals with controlling the action of bounded linear operators on RISs. In \cite{argyros:haydon:2011} it was shown that for a bounded linear operator $T$ and a RIS $(x_k)_k$, $\mathrm{dist}(Tx_k,\C x_k)\to0$. Here, this is not true (unless $K$ is a singleton). If it were, $\X$ would have the scalar-plus-compact property. Instead, it is shown that $T$ is in a certain weak sense close to a diagonal operator. Recall that eventually it can be shown that $T$ is a compact perturbation of such an operator (Corollary \ref{diag+comp}).

\begin{prop}
\label{norm killer}
Let $T:\X\to\X$ be a linear operator, $(x_k)_k$ be a RIS in $\X$, and $(I_k,\eta_k)_k$ be a sequence, whose each term is a pair of an  interval $I_k$ of $\N$ with an $\eta_k$ in $\Ga$, such that $\lim_k\weight(I_k,\eta_k) = 0$. If
\[e_{\eta_k}^*\big(P_{I_k}x_k\big) = 0,\text{ for all }k\in\N,\text{ and }\liminf_k\big|e_{\eta_k}^*\big(P_{I_k}Tx_k\big)\big|>0,\]
then $T$ is unbounded.
\end{prop}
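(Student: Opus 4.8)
The plan is to build, from the hypotheses, a $(C', 2j_0-1, 0)$-dependent sequence $(y_i)$ along a subsequence, arrange that $T$ does not annihilate it in a controlled way, and then compare two incompatible lower bounds. The governing principle is the one from the proof in \cite{argyros:haydon:2011}: a zero dependent sequence has tiny average norm (Proposition \ref{zero dependent sequence bound}), so if $T$ maps the relevant pieces to vectors of non-vanishing size, then $T$ applied to the average is large while the average itself is small, forcing $\|T\|$ to exceed any prescribed bound.

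First I would pass to a subsequence of $(x_k)_k$ so that $\|Tx_k\|$ is bounded (if not, $T$ is already unbounded) and so that $\delta := \liminf_k |e_{\eta_k}^*(P_{I_k}Tx_k)| > 0$ is realized as a genuine limit-inf along the whole sequence; also ensure $\weight(I_k,\eta_k) \to 0$ with the supports $\range(x_k)\cup I_k$ going to infinity, and that $(x_k)_k$ remains a $C$-RIS (passing to a further subsequence, using the Remark after Example \ref{basis RIS} if needed). The point of the condition $e_{\eta_k}^*(P_{I_k}x_k)=0$ is that it makes $(x_k, \eta_k')$ behave like the data of an exact pair for a suitable $\eta_k'$ refining $\eta_k$: concretely, I would invoke Lemma \ref{EP builder} (with the $x_k$ playing the role of its RIS entries and $\eta_k$, $I_k$ furnishing the required conditions (ii)--(v)) to manufacture, for each target weight $m_{4j}^{-1}$, an exact pair $(z, \ga)$ of the form $z = \frac{m_{2j}}{n_{2j}}\sum_{k}x_k$ (over an appropriate block of indices) with $e_\ga^*(z) = 0$. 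Crucially, since $\ga$'s evaluation analysis is $\sum d_{\xi_k}^* + m_{2j}^{-1}\sum \la_k e_{\eta_k}^*\circ P_{I_k}$ and each $e_{\eta_k}^*(P_{I_k}x_k) = 0$ while $e_{\eta_k}^*(P_{I_k}Tx_k)$ is bounded away from $0$, the functional $e_\ga^*$ detects $Tz$ with size comparable to $m_{2j}^{-1}\delta$ (after averaging), whereas $e_\ga^*(z)=0$.

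Next I would splice these exact pairs into a $(C', 2j_0-1, 0)$-dependent sequence $(y_i)_{i=1}^{n_{2j_0-1}}$ for a large $j_0$: choose $y_1$ an exact pair of weight $m_{4j_1-2}^{-1}$ and inductively choose $y_{i+1}$ an exact pair of weight $m_{4\sigma(\xi_i)}^{-1}$ dictated by the coding function, at each stage using Lemma \ref{EP builder} applied to a far-out block of the original RIS $(x_k)_k$ (this is possible because we have infinitely many $x_k$'s available and the coding constraint only pins down which weight, not which vectors). By construction each $y_i$ has the form $\frac{m_{w_i}}{n_{w_i}}\sum_{k\in B_i}x_k$ and, by the computation of the previous paragraph, $|e_{\ga}^*(Ty_i)| \geq c$ for a fixed $c>0$ independent of $i$, where $\ga$ is the master index of the dependent sequence, while $e_\ga^*(y_i)$ contributes only through the $d_{\xi_i}^*$ terms in a controlled way. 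Then on one hand $\big\|n_{2j_0-1}^{-1}\sum_i y_i\big\| \leq 30C' m_{2j_0-1}^{-2}$ by Proposition \ref{zero dependent sequence bound}, while on the other hand
\[
\Big\|T\Big(n_{2j_0-1}^{-1}\sum_{i=1}^{n_{2j_0-1}}y_i\Big)\Big\| \geq \frac{1}{m_{2j_0}}\Big|\sum_{i=1}^{n_{2j_0-1}} \la_i e_{\eta_i}^*\big(P_{I_i} T y_i\big)\Big|,
\]
which, using $e_{\eta_i}^*(P_{I_i}y_i)=0$ to kill the self-interaction terms and a standard Cauchy-Schwarz/coding argument to control the cross terms between distinct pairs (exactly as in the classical dependent-sequence estimate, since the $\eta_i$ here refine the $\eta_k$'s which have rapidly decaying weights), stays bounded below by $c' m_{2j_0-1}^{-1}$ for a fixed $c'>0$. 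Comparing, $\|T\| \geq c' m_{2j_0-1} / (30 C')$, and letting $j_0 \to \infty$ shows $T$ is unbounded.

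The main obstacle I anticipate is the bookkeeping in the cross-term estimate for $|e_\ga^*(Ty_i)|$ and for $e_{\eta_i}^*(P_{I_i}Ty_j)$ when $i \neq j$: one must show that the "off-diagonal" action of $T$ on the building blocks of the dependent sequence does not conspire to cancel the main term, and this is precisely where the metric/weight constraints (rapidly decaying $\weight(I_k,\eta_k)$, essentially rapidly converging supports) and the coding injectivity of $\sigma$ do their work — the argument should parallel the cross-term control in \cite[Proposition 6.6]{argyros:haydon:2011} and Proposition \ref{zero dependent sequence bound}, but it needs to be checked that applying $T$ before evaluating does not break the skipped-block and very-fast-growing structure that those estimates rely on. A secondary technical point is verifying that the $Tx_k$ (or their relevant projections) can themselves be arranged into the skipped-block RIS-like configuration required by Lemma \ref{EP builder}; this uses that $T$ is bounded on a space with shrinking basis (Proposition \ref{shrinking basis}), so $Tx_k$ is weakly null and small perturbations restore the block structure.
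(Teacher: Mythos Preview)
Your approach is essentially the paper's: build $(16C,2j,0)$-exact pairs from the RIS via Lemma~\ref{EP builder}, thread them into a $(16C,2j_0-1,0)$-dependent sequence using the coding $\sigma$, and compare the upper bound from Proposition~\ref{zero dependent sequence bound} against a lower bound for $\|Tw\|$ coming from $e_\ga^*(Tw)$. So the strategy is correct.

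Two corrections. First, the cross-term obstacle you flag as ``main'' does not arise. In the paper's argument one does not attempt to estimate $e_{\zeta_i}^*(P_{J_i}Tz_{i'})$ for $i\neq i'$ via coding or weight decay; instead, one simply arranges at construction time that the interval $J_i$ appearing in the evaluation analysis of the odd-weight $\ga$ contains \emph{both} $\range(z_i)$ and $\range(Tz_i)$. Concretely, after building $(z_i,\zeta_i)$ one picks $q_i>\max(\range(z_i)\cup\range(Tz_i))$ before choosing $\xi_i\in\De_{q_i}$ and moving on. Then $\supp(Tz_{i'})\cap J_i=\emptyset$ for $i\neq i'$, so the cross terms vanish identically and $e_\ga^*(Tw)=m_{2j_0-1}^{-1}\cdot n_{2j_0-1}^{-1}\sum_i e_{\zeta_i}^*(Tz_i)$ directly. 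This also explains why it matters that $(Tx_k)_k$ be a genuine block sequence (obtained via a compact perturbation of $T$, using that the basis is shrinking), not merely weakly null: you need $\range(Tx_k)$ to be finite and controllable so that the enlarged intervals $J_k\supset\range(x_k)\cup\range(Tx_k)$ can be made successive.

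Second, you need to invoke the compactness of $K$ explicitly: pass to a subsequence so that $\kappa(\eta_k)\to\kappa_0$ for some $\kappa_0\in K$. Without this, condition (iv) of Lemma~\ref{EP builder} (and the analogous metric constraint $\varrho(\kappa(\zeta_i),\kappa_0)\leq 2^{-(\rank(\xi_{i-1})+1)}$ needed when assembling the odd-weight $\ga$) cannot be met. Your phrase ``essentially rapidly converging supports'' gestures at this but does not secure it.

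Minor: in your displayed lower bound the weight should be $m_{2j_0-1}^{-1}$ (odd), not $m_{2j_0}^{-1}$.
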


\begin{proof}
Let $(x_k)_k$ be a $C$-RIS and assume that $T$ is bounded. Recall that the basis of $\X$ is shrinking and therefore $(Tx_k)_k$ is weakly null.  By applying a compact perturbation to $T$ and passing to a subsequence, it may be assumed that $(Tx_k)_k$ is a block sequence and that for some $\e>0$ and each $k\in\N$, $e_{\gamma_k}^*(P_{I_k}Tx_k) = |e_{\gamma_k}^*(P_{I_k}Tx_k)| \geq \e$ (after perhaps applying a complex rotation to each $x_k$). By restricting each $I_k$ it may be assumed that it is a subset of the smallest interval $J_k$ containing $\range(x_k)\cup\range(Tx_k)$. Using the compactness of $K$, it may also be assumed that there is $\kappa_0\in K$ with $\lim_k\varrho(\kappa(\eta_k),\kappa_0) = 0$. With this information, in two steps it is possible to construct a vector that ``blows up'' the norm of $T$.

{\bf Step 1:} For each $j,q\in\N$ and $\delta>0$, there exists a $(16C,2j,0)$-exact pair $(z,\zeta)$ such that $\min\supp(z) > q$, $\varrho(\kappa(\eta),\kappa_0) \leq \delta$, and $e_\zeta^*(Tz) = |e_\zeta^*(Tz)| \geq \e$.

This is achieved with the help of Lemma \ref{EP builder}. Choose members of the sequence $x_{k_1},\ldots,x_{k_{n_2j}}$, starting after $q$, and $q_1<\cdots<q_{n_{2j}}$ such that the assumptions of that Lemma are satisfied, while at the same time $I_1\subset J_1\subset[1,q_1)$ and $I_k\subset J_k \subset (q_{r-1},q_r)$, for $r=2,\ldots,n_{2j}$. Provided that $q_{n_{2j}}$ is sufficiently large and using $\la_1=\cdots=\la_{n_{2j}} = 1$, the resulting pair $(z,\zeta)$ has the desired properties.

{\bf Step 2:} For each $j_0\in\N$ there exists a vector $w$ in $\X$ with $\|w\| \leq 480Cm_{j_0}^{-2}$ and $\|Tw\| \geq \e/m_{2j_0-1}$. In particular, $\|T\| \geq m_{j_0}\e/{480 C}$.

This vector $w$ is found by performing a similar construction process as in the proof of Proposition \ref{gamma builder} to cosntruct a $(16C,2j_0,0)$-dependent sequence $(z_i)_{i=1}^{n_{2j_0-1}}$ and a $\ga\in\Ga$ ``close'' to $\kappa_0$ with $\weight(\ga) = m_{2j_0-1}^{-1}$, $\kappa(\ga)$ and evaluation analysis
\[e_\ga^* = \sum_{i=1}^{n_{2j_0-1}}d^*_{\xi_i} + \frac{1}{m_{2j_0-1}}\sum_{i=1}^{n_{2j_0-1}}e^*_{\zeta_i}\circ P_{J_i},\]
where $J_i$ is an interval of $\N$ containing $\range(z_i)$ and $\range(Tz_i)$. In each step, if $(z_i)_{i=1}^{i_0}$ and $\xi_{i_0}$ have been chosen, use Step 1 to find a $(16C,4\sigma(\xi_{i_0}),0)$-exact pair $(z_{i_0+1},\zeta_{i_{0+1}})$ with $\min\supp(z_{i_0+1}) > \rank(\xi_{i_0})$, $\varrho(\kappa(\zeta_{i_0+1}),\kappa_0)\leq 2^{-(\rank(\xi_{i_0})+1)}$, and $e_{\eta_{i_0+1}}^*(Tz_{i_0+1}) = |e_{\eta_{i_0+1}}^*(Tz_{i_0+1})| \geq\e$. Pick $q_{i_0+1}>\max(\range(z_{i_0+1})\cup\range(Tz_{i_0+1})$ and choose an appropriate $\xi_{i_0+1}\in\De_{q_{i_0+1}}^{\text{\ref{age nonzero},odd}}$.

Then, put $w = n_{2j_0-1}^{-1}\sum_{i=1}^{n_{2j_0-1}}z_i$. It follows that $e_\ga^*(Tz) = |e_\ga^*(Tz)| \geq \e/m_{2j_0-1}$ and, by Proposition \ref{zero dependent sequence bound}, $\|z\| \leq 480Cm^{-2}_{2j_0-1}$.

Because $j_0$ was arbitrary, $\|T\|$ cannot be finite and thus $T$ is unbounded.
\end{proof}

The next result may be viewed as first step towards the fact that a bounded $T$ must be close to a diagonal operator (Corollary \ref{diag+comp}).
\begin{prop}
\label{first diagonal proximity}
Let $T:\X\to \X$ be bounded linear operator. Then,
\begin{enumerate}[leftmargin=21pt,label=(\roman*)]

\item for any RIS $(x_k)_k$ in $\X$, $\lim_k\|Tx_k - P_{\range(x_k)}Tx_k\| = 0$ and

\item $\lim_{\rank(\ga)\to\infty}\big\|d_\ga^*(Td_\ga)d_\ga - Td_\ga\big\| = 0$.

\end{enumerate}
\end{prop}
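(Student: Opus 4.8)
The plan is to prove both parts by contradiction, in each case manufacturing, out of the assumed failure, a RIS together with a sequence of pairs $(I_k,\eta_k)$ that meets the hypotheses of Proposition \ref{norm killer}, thereby forcing $T$ to be unbounded.

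\textbf{Part (i).} Assume $T$ bounded and suppose $\|Tx_k-P_{\range(x_k)}Tx_k\|\not\to0$; after passing to a subsequence this quantity is $\geq\e>0$ for all $k$. Since the basis of $\X$ is shrinking (Proposition \ref{shrinking basis}), $(Tx_k)_k$ is weakly null, so by a gliding-hump argument (equivalently, after subtracting a compact operator, which changes neither boundedness nor the validity of the argument) I may pass to a further subsequence and a block sequence $(w_k)_k$ with $\|Tx_k-w_k\|\to0$; then $\|w_k-P_{\range(x_k)}w_k\|\geq\e/2$ eventually, and $\min\supp(w_k)\to\infty$. Writing $I-P_{\range(x_k)}=P_{[1,\min\range(x_k))}+P_{(\max\range(x_k),\infty)}$, one of the two summands applied to $w_k$ has norm $\geq\e/4$; let $L_k$ be the (bounded) interval supporting it — in either case $L_k$ is disjoint from $\range(x_k)$, so $P_{L_k}x_k=0$, and $\min\supp(P_{L_k}w_k)\geq\min\supp(w_k)\to\infty$. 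Pick $\ga_k\in\Ga$ with $|e_{\ga_k}^*(P_{L_k}w_k)|\geq\tfrac34\|P_{L_k}w_k\|\geq\tfrac{3\e}{16}$.

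Now split into two cases. If $\weight(L_k,\ga_k)\to0$ along a subsequence, set $(I_k,\eta_k)=(L_k,\ga_k)$. Otherwise $\weight(\ga_k)=m_{j_0}^{-1}$ is a fixed nonzero value on a subsequence, and Lemma \ref{self fulfilling} applied to $x=P_{L_k}w_k$ and $\ga=\ga_k$ produces $\eta_k\in\Ga$ and a finite interval $I_k\subseteq\range(P_{L_k}w_k)\subseteq L_k$ with $\weight(I_k,\eta_k)\leq 2^{-\min\supp(P_{L_k}w_k)}\to0$ and $|e_{\eta_k}^*(P_{I_k}w_k)|=|e_{\eta_k}^*(P_{I_k}P_{L_k}w_k)|\geq\tfrac1{8n_{j_0}}\|P_{L_k}w_k\|$. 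In both cases $\weight(I_k,\eta_k)\to0$, $I_k\subseteq L_k$ gives $e_{\eta_k}^*(P_{I_k}x_k)=0$, and transferring across $\|Tx_k-w_k\|\to0$ yields $\liminf_k|e_{\eta_k}^*(P_{I_k}Tx_k)|>0$. Proposition \ref{norm killer}, applied to the RIS $(x_k)_k$ and the pairs $(I_k,\eta_k)$, then says $T$ is unbounded, a contradiction.

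\textbf{Part (ii).} Suppose $\|d_\ga^*(Td_\ga)d_\ga-Td_\ga\|\not\to0$ as $\rank(\ga)\to\infty$; using Example \ref{basis RIS}, pass to a sequence $(\ga_k)_k$ with $n_k:=\rank(\ga_k)\to\infty$, with $x_k:=d_{\ga_k}$ a RIS, and with $\|d_{\ga_k}^*(Td_{\ga_k})d_{\ga_k}-Td_{\ga_k}\|\geq\e$. Since $d_\ga\in Z_{\rank(\ga)}$ we have $\supp(d_\ga)=\{\rank(\ga)\}$, so $\range(x_k)=\{n_k\}$, and part (i) gives $\|Td_{\ga_k}-v_k\|\to0$ where $v_k:=P_{\{n_k\}}Td_{\ga_k}\in Z_{n_k}$. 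As $\|d_{\ga_k}^*\|\leq3$ and $\|d_{\ga_k}\|\leq2$, it follows that $\|d_{\ga_k}^*(v_k)d_{\ga_k}-v_k\|\geq\e/2$ eventually. Writing $v_k=\sum_{\xi\in\De_{n_k}}c_\xi^{(k)}d_\xi$ with $c_\xi^{(k)}=e_\xi^*(v_k)=d_\xi^*(v_k)$, and using that $i_{n_k}$ is an extension operator (so $\|\sum_{\xi\neq\ga_k}c_\xi^{(k)}d_\xi\|\geq\max_{\xi\neq\ga_k}|c_\xi^{(k)}|$), the bound $\e/2$ forces some $\ga_k'\in\De_{n_k}$ with $\ga_k'\neq\ga_k$ and $|c_{\ga_k'}^{(k)}|\geq\tfrac38\e$. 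Apply Proposition \ref{norm killer} to the RIS $(d_{\ga_k})_k$ with $I_k=\{n_k\}$ and $\eta_k=\ga_k'$: since $\rank(\ga_k')=n_k=\min(I_k)$, formula \eqref{new weight} gives $\weight(I_k,\eta_k)=0$ for every $k$; $e_{\ga_k'}^*(P_{\{n_k\}}d_{\ga_k})=e_{\ga_k'}^*(d_{\ga_k})=0$ because $\ga_k,\ga_k'\in\De_{n_k}$ are distinct; and $|e_{\ga_k'}^*(P_{\{n_k\}}Td_{\ga_k})|=|c_{\ga_k'}^{(k)}|\geq\tfrac38\e$. Proposition \ref{norm killer} again yields $T$ unbounded, a contradiction.

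\textbf{Main obstacle.} All the real work is done by Proposition \ref{norm killer} and Lemma \ref{self fulfilling}; granting those, the only delicate point is the bookkeeping in part (i). One must pass to the block-sequence approximation precisely so that $\min\supp(P_{L_k}w_k)\to\infty$, which is exactly what makes the weight $\weight(I_k,\eta_k)$ extracted by Lemma \ref{self fulfilling} tend to zero; and one must keep careful track of the interval inclusions ($I_k\subseteq L_k$ disjoint from $\range(x_k)$, so that $P_{L_k}x_k=0$ and $P_{I_k}P_{L_k}=P_{I_k}$). Part (ii) is then almost formal once part (i) is in hand, the one observation being that the off-diagonal mass of $Td_{\ga_k}$ inside the single block $Z_{n_k}$ is detected by functionals of the form $e_{\eta}^*\circ P_{\{n_k\}}$, which automatically have $\weight=0$.
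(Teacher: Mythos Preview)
Your proof is correct and follows essentially the same route as the paper's: both parts are proved by contradiction via Proposition \ref{norm killer}, with the intervals disjoint from $\range(x_k)$ in part (i) and the singleton $\{\rank(\ga_k)\}$ with a distinct $\ga_k'\in\De_{n_k}$ in part (ii). The only cosmetic difference is that in part (i) the paper invokes Proposition \ref{vfg estimates} directly on the block sequence $(P_{L_k}Tx_k)_k$, whereas you reprove that step inline via the case split and Lemma \ref{self fulfilling}.

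One small slip in part (ii): the parenthetical ``$i_{n_k}$ is an extension operator (so $\|\sum_{\xi\neq\ga_k}c_\xi^{(k)}d_\xi\|\geq\max_{\xi\neq\ga_k}|c_\xi^{(k)}|$)'' is the wrong direction of the inequality for what you need. To extract $|c_{\ga_k'}^{(k)}|\geq\tfrac38\e$ from $\|\sum_{\xi\neq\ga_k}c_\xi^{(k)}d_\xi\|\geq\e/2$ you need the \emph{upper} bound $\|i_{n_k}\|\leq 4/3$ (Remark \ref{component norms}), giving $\max_{\xi\neq\ga_k}|c_\xi^{(k)}|\geq\tfrac34\cdot\tfrac\e2$. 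The conclusion stands; only the cited reason needs adjusting.
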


\begin{proof}
To prove (i), for each $k\in\N$ write $\range(x_k) = [p_k,q_k]$ and set $I_k = [1,p_k)$, $I'_k = (q_k,\infty)$. If (i) is false, either $\liminf_k\|P_{I_k}Tx_k\| > 0$, or $\liminf_k\|P_{I_k'}Tx_k\| > 0$. These cases are treated in the same way so assume the first one holds. It may also be assumed that $(Tx_k)_k$ is a block sequence. Therefore, $(P_{I_k}Tx_k)_k$ is also a block sequence with $\liminf_k\|P_{I_k}Tx_k\| > 0$. Apply Proposition \ref{vfg estimates} and pass to a subsequence to find, for each $k\in\N$, an interval $J_k$ of $\range(P_{I_k}Tx_k)\subset I_k$ and $\eta_k$ in $\Ga$ such that $ \lim_k\weight(J_k,\eta_k) = 0\text{ and }\liminf_k\big|e^*_{\eta_k}\big(P_{J_k}x_k\big)\big|>0$. But then, because $\range(x_k)\cap I_k = \emptyset$, it follows that $e_{\eta_k}^*(P_{J_k}x_k) = 0$, for all $k\in\N$. By Proposition \ref{norm killer}, $T$ is unbounded.

To proceed with the proof of (ii), recall that for each $n\in\N$, $(d_\ga)_{\ga\in\De_n}$ is 2-equivalent to the unit vector basis of $\ell_\infty(\De_n)$. Therefore, for each $\ga_0\in\De_n$,
\begin{align*}
\Big\|P_{\{\rank(\ga_0)\}}Td_{\ga_0} - d_{\ga_0}^*\big(Td_{\ga_0}\big)d_{\ga_0}\Big\| &= \Big\|\sum_{\ga'\in\De_n\setminus\{\ga_0\}}d_{\ga'}^*\big(Td_{\ga_0}\big)d_{\ga'}\Big\|\\
&\leq 2\max_{\ga'\in\De_n\setminus\{\ga_0\}}\big|d_{\ga'}^*\big(Td_{\ga_0}\big)\big|.
\end{align*}
Therefore, it suffices to check that
\[\lim_n\max_{\ga\neq\ga'\in\De_n}\Big|d^*_{\ga'}\big(Td_\ga\big)\Big| = 0.\]
If this is false then there exist $\e>0$, a strictly increasing sequence $(q_k)_k$ in $\N$, and for each $k\in\N$, $\ga_k\neq\ga'_k\in\De_{q_k}$, such that $|e_{\ga'_k}^*(P_{\{q_k\}}Td_{\ga_k})| = |d_{\ga_k'}(Td_{\ga_k})| \geq \e$. Observe that $\weight(\{q_k\},\ga_k') = 0$ and $e_{\ga_k'}(P_{\{q_k\}}d_{\ga_k}) = d_{\ga_k'}^*(d_{\ga_k}) = 0$, for all $k\in\N$. By passing to a subsequence, as in example \ref{basis RIS}, $(d_{\ga_k})_k$ is a RIS. Proposition \ref{norm killer} yields that $T$ is unbounded.
\end{proof}

\subsection{Eventual continuity of diagonal entries}
In this section the two black-box theorems of Section \ref{impact section} are proved. For convenience, their statements are repeated as propositions.
\begin{prop}[Theorem \ref{eventual continuity}]
\label{eventual continuity proof}
Let $T:\X\to\X$ be a bounded linear operator. Then, for every $\e>0$ there exist $n\in\N$ and $\delta>0$ such that for all $\ga$,$\ga'\in\Ga$ with $\min\{\rank(\ga),\rank(\ga')\}\geq n$ and $\varrho(\kappa(\ga),\kappa(\ga'))<\delta$,
\begin{equation}
\label{eventual continuity proof eq}\big|d_\ga^*\big(Td_\ga\big) - d_{\ga'}^*\big(Td_{\ga'}\big)\big| <\e.
\end{equation}
Therefore, the function $\varphi_T:K\to \C$ given by
\[\varphi_T(\kappa) = \lim_{\substack{\rank(\ga)\to\infty\\\kappa(\ga)\to\kappa}}d_\ga^*\big(Td_\ga\big)\]
is well defined and continuous.
\end{prop}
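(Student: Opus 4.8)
The plan is to argue by contradiction, reducing a hypothetical failure of the eventual‑continuity estimate to the unboundedness criterion of Proposition \ref{norm killer}. Suppose \eqref{eventual continuity proof eq} fails for some $\e>0$: then for every $n$ and every $\delta>0$ there are witnesses violating it, and diagonalizing produces sequences $(\ga_k)_k,(\ga'_k)_k$ in $\Ga$ with $\min\{\rank(\ga_k),\rank(\ga'_k)\}\to\infty$, $\varrho(\kappa(\ga_k),\kappa(\ga'_k))\to0$, and $|\psi(\ga_k)-\psi(\ga'_k)|\ge\e$, where $\psi(\ga):=d_\ga^*(Td_\ga)$ obeys $|\psi(\ga)|\le\|d_\ga^*\|\,\|d_\ga\|\,\|T\|\le6\|T\|$. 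Passing to further subsequences I would arrange $\psi(\ga_k)\to\lambda$, $\psi(\ga'_k)\to\mu$ with $|\lambda-\mu|\ge\e$, and, by compactness of $K$, $\kappa(\ga_k)\to\kappa_0$ and $\kappa(\ga'_k)\to\kappa_0$ for a single point $\kappa_0\in K$; by Example \ref{basis RIS} and the Remark following it I may also assume $(d_{\ga_k})_k$, $(d_{\ga'_k})_k$ and every fixed (bounded) linear combination of them are RIS.

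Next I would record how $T$ behaves on this data: Proposition \ref{first diagonal proximity}\,(ii) gives $\|Td_{\ga_k}-\lambda d_{\ga_k}\|\to0$ and $\|Td_{\ga'_k}-\mu d_{\ga'_k}\|\to0$, so for any bounded scalars the vector $x_k:=\alpha_k d_{\ga_k}+\beta_k d_{\ga'_k}$ satisfies $\|Tx_k-(\alpha_k\lambda d_{\ga_k}+\beta_k\mu d_{\ga'_k})\|\to0$. Thus $T$ is, on these RIS, essentially diagonal with two distinct eigenvalues, and for $\nu\in\C$ the ``mixing defect'' $\alpha_k\lambda d_{\ga_k}+\beta_k\mu d_{\ga'_k}-\nu x_k$ is a genuine combination of $d_{\ga_k}$ and $d_{\ga'_k}$ whose size is controlled from below by $|\lambda-\mu|\ge\e$.

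The crux is to manufacture from this the hypotheses of Proposition \ref{norm killer}: a RIS $(u_l)_l$, intervals $J_l$, and $\zeta_l\in\Ga$ with $\weight(J_l,\zeta_l)\to0$, $e_{\zeta_l}^*(P_{J_l}u_l)=0$ and $\liminf_l|e_{\zeta_l}^*(P_{J_l}Tu_l)|>0$; Proposition \ref{norm killer} then makes $T$ unbounded, the desired contradiction. The decisive feature is that $\kappa(\ga_k)$ and $\kappa(\ga'_k)$ approach the \emph{same} point $\kappa_0$: the metric side‑conditions in Instruction \ref{main def} (hence in Proposition \ref{gamma builder} and Lemma \ref{EP builder}) only demand that the relevant points of $K$ be close to a common reference, so along a sparse, rank‑interleaved subsequence one can legitimately construct $\zeta_l\in\Ga$ of small weight whose evaluation analysis simultaneously involves $\ga_k$‑flavoured and $\ga'_k$‑flavoured functionals — a configuration forbidden in earlier Bourgain–Delbaen spaces. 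Taking $u_l$ to be the combination of the corresponding $d_{\ga_k},d_{\ga'_k}$ annihilated by $e_{\zeta_l}^*\circ P_{J_l}$, the second paragraph shows that $e_{\zeta_l}^*(P_{J_l}Tu_l)$ is, up to $o(1)$, a fixed multiple of $(\lambda-\mu)$ times a coefficient bounded away from $0$, hence nonzero. I expect the real work to be exactly this construction — keeping weights, ranks, intervals and $\kappa$‑values mutually compatible so that such a $\zeta_l$ exists — together with the case analysis according to whether the weights of $(\ga_k)_k,(\ga'_k)_k$ are eventually constant (possibly distinct) or tend to $0$; in the ``large weight'' case one must first absorb the basis vectors into $0$‑exact pairs via Lemma \ref{EP builder} before a detecting functional of small weight can be built.

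Finally, granting \eqref{eventual continuity proof eq}, the remaining assertions are routine. Since $(\psi(\ga))_{\ga\in\Ga}$ is bounded and $\bigcup_nK_n$ is dense in $K$, the estimate says precisely that $\ga\mapsto\psi(\ga)$ is ``eventually $\varrho$‑uniformly Cauchy'', so for each $\kappa\in K$ the limit $\varphi_T(\kappa)=\lim_{\rank(\ga)\to\infty,\ \kappa(\ga)\to\kappa}\psi(\ga)$ exists and does not depend on the approximating net; passing the same $\e$–$\delta$ statement to the limit shows $\varphi_T$ is (uniformly) continuous on $K$.
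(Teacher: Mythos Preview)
Your proposal is correct and follows the same overall strategy as the paper: argue by contradiction, extract two sequences $(\ga_k)$, $(\ga'_k)$ with $\kappa(\ga_k),\kappa(\ga'_k)\to\kappa_0$ but diagonal entries converging to distinct $\lambda\neq\mu$, invoke Proposition~\ref{first diagonal proximity}(ii), and then blow up $\|T\|$ via the exact-pair/dependent-sequence machinery. The execution in the paper is more direct, however. Rather than manufacturing data $(u_l,\zeta_l,J_l)$ to feed into Proposition~\ref{norm killer} as a black box, the paper interleaves the two sequences as $(\theta_k)=(\ga_1,\ga'_1,\ga_2,\ga'_2,\ldots)$, perturbs $T$ by a compact so that $Td_{\theta_k}$ equals $\lambda d_{\theta_k}$ or $\mu d_{\theta_k}$ exactly, observes that $((-1)^kd_{\theta_k})_k$ is a skipped-block $C$-RIS, and then \emph{repeats} the two-step argument from inside Proposition~\ref{norm killer}: Step~1 builds a $(16C,2j,0)$-exact pair $(z,\zeta)$ with $e_\zeta^*(Tz)=(\lambda-\mu)/2$ by taking $\eta_r=\theta_r$, $I_r=\{\rank(\theta_r)\}$, $\la_r=1$ in Proposition~\ref{gamma builder} (the alternating signs force $e_\zeta^*(z)=0$ since $n_{2j}$ is even); Step~2 is verbatim the dependent-sequence construction. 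In your packaging the pairs $(u_l,\zeta_l)$ would already \emph{be} these exact pairs, and Proposition~\ref{norm killer} would then internally rebuild exact pairs out of them --- correct but one redundant layer.

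One place where you over-complicate: the anticipated case analysis on whether $\weight(\ga_k)$ is eventually constant, and in particular the claim that in the ``large weight'' case one must first absorb the $d_{\ga_k}$ into $0$-exact pairs before a small-weight detector can be built, is unnecessary. The functionals actually used are $d_{\theta_k}^*=e_{\theta_k}^*\circ P_{\{\rank(\theta_k)\}}$, and by \eqref{new weight} one has $\weight(\{\rank(\theta_k)\},\theta_k)=0$ regardless of $\weight(\theta_k)$; hence the growth constraint in Instruction~\ref{main def} is satisfied automatically and the construction of $\zeta$ via Proposition~\ref{gamma builder} goes through uniformly. The only role of the weight dichotomy is in fixing the RIS constant $C$ (either $2$ or $2m_{j_0}$), exactly as in Example~\ref{basis RIS}.
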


\begin{proof}
If the conclusion is false, there exist sequences $(\eta_k)$, $(\zeta_k)$ and $\kappa_0\in K$ such that $\varrho(\kappa(\eta_k),\kappa_0)\to 0$ and $\varrho(\kappa(\zeta_k),\kappa_0) = 0$ yet $\lim_kd_{\eta_k}^*(Td_{\eta_k}) = \la\neq\mu = \lim_kd_\zeta^*(Td_\zeta)$. By virtue of Proposition \ref{first diagonal proximity} (ii), and by perturbing $T$ by a compact operator, it may be assumed that for all $k\in\N$, $Td_{\eta_k} = \la d_{\eta_k}$ and $Td_{\zeta_k} = \mu d_{\zeta_k}$. This will make it possible to perform a process similar to that in the proof of Proposition \ref{norm killer} to ``blow up'' the norm of $T$.

Note that for some $C>0$, it is possible to pass to subsequences of $(\eta_k)$ and $(\zeta_k)$ so that if
\[(\theta_k)_k = ({\eta_1},{\zeta_1},{\eta_2},{\zeta_2},\ldots,{\eta_k},{\zeta_k},\ldots)\]
then the sequence $((-1)^kd_{\theta_k})$ is a skipped block $C$-RIS. This argument is similar to Example \ref{basis RIS}. If $\lim_k\weight(\eta_k) = \lim_k\weight(\zeta_k) = 0$ then one can choose this sequence to be a $2$-RIS. If instead there exists $j_0\in\N$ such that $\limsup_k\max(\weight(\eta_k),\weight(\zeta_k)) = m_{j_0}^{-1}$, then $((-1)^kd_{\theta_k})$ may be chosen to be $2m_{j_0}$-RIS. Furthermore, while choosing $(\theta_k)_k$, it is possible to choose a sequence of natural numbers $(q_k)_k$ with $\rank(\theta_1)<q_1<\rank(\theta_2)<q_2<\cdots$ such that, for each $k\geq 2$, $\varrho(\kappa(\theta_k),\kappa_0)\leq 2^{-(q_{k-1}+1)}$. Also recall that for each $k\in\N$, $d_{\theta_k}^*  = e_{\theta_k}^*\circ P_{\{\rank(\theta_k)\}}$ and $\weight(\{\rank(\theta_k)\},\theta_k) = 0$.

{\bf Step 1:} For each $j,q\in\N$ and $\delta>0$, there exists a $(16C,2j,0)$-exact pair $(z,\zeta)$ such that $\min\supp(z) > q$, $\varrho(\kappa(\eta),\kappa_0) \leq \delta$, and $|e_\zeta^*(Tz)| \geq \e = |\la - \mu|/2$.

This is a repetition of the argument in Lemma \ref{EP builder}. Omitting a few initial terms of the sequence $((-1)^kd_{\theta_k})_k$, there exists a $\zeta\in\Ga$ with evaluation analysis
\[e_\zeta^* = \sum_{k=1}^{n_{2j}}d_{\xi_k}^* + \frac{1}{m_{2j}}\sum_{k=1}^{n_{2j}}d^*_{\theta_k},\]
where $\rank(\xi_k) = q_k$, for $k=1,\ldots,n_{2j}$. Put $z = m_{2j}n_{2j}^{-1}\sum_{k=1}^{n_{2j}}(-1)^kd_{\theta_k}$. Then, $(z,\zeta)$ is a  $(16C,2j,0)$-exact pair (recall that $n_{2j}$ is even) and $e^*_\zeta( Tz) = (\la-\mu)/2$.

{\bf Step 2:} For each $j_0\in\N$ there exists a vector $w$ in $\X$ with $\|w\| \leq 480Cm_{j_0}^{-2}$ and $\|Tw\| \geq \e/m_{2j_0-1}$. In particular, $\|T\| \geq m_{j_0}\e/{480 C}$.

This is identical to the proof of the second step in Proposition \ref{norm killer} and, in conclusion, $T$ is unbounded.
\end{proof}

\begin{prop}[Theorem \ref{compact zero diagonal}]
\label{compact zero diagonal proof}
A bounded linear operator $T:\mathcal{L}(\X)\to\mathcal{L}(\X)$ is compact if and only if $\varphi_T = 0$.
\end{prop}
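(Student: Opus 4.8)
The plan is to prove both implications separately, with the forward direction being the routine one and the reverse direction being where the work lies. First I would establish the easy direction: if $T$ is compact, then by Corollary \ref{ris compact char} (or directly, since the basis is shrinking and so each $(d_\ga)$ with $\rank(\ga)\to\infty$ is weakly null along subsequences, being a RIS by Example \ref{basis RIS}), one has $\lim_{\rank(\ga)\to\infty}\|Td_\ga\| = 0$. Hence $|d_\ga^*(Td_\ga)| \le \|d_\ga^*\|\|Td_\ga\| \le 3\|Td_\ga\| \to 0$, so by the very definition of $\varphi_T$ as a limit over $\rank(\ga)\to\infty$, we get $\varphi_T = 0$.

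For the converse, assume $\varphi_T = 0$; I want to show $T$ is compact. By Corollary \ref{ris compact char}, it suffices to show $\|Tx_k\|\to 0$ for every RIS $(x_k)_k$ in $\X$. The first reduction uses Proposition \ref{first diagonal proximity}(i): along any RIS, $\|Tx_k - P_{\range(x_k)}Tx_k\|\to 0$, so it is enough to control $\|P_{\range(x_k)}Tx_k\|$. Suppose, for contradiction, that there is a RIS $(x_k)_k$ with $\liminf_k\|Tx_k\| > 0$; after passing to a subsequence and perturbing by a compact operator we may assume $(Tx_k)_k$ is a block sequence with $\liminf_k\|P_{\range(x_k)}Tx_k\| = \liminf_k\|Tx_k\| > 0$, and (again passing to a subsequence using compactness of $K$) that $\kappa(\ga_k)\to\kappa_0$ for some $\kappa_0\in K$, where $\ga_k$ is chosen via Proposition \ref{vfg estimates} so that $\weight(I_k,\eta_k)\to 0$ and $\liminf_k|e_{\eta_k}^*(P_{I_k}x_k)|>0$ for suitable intervals $I_k\subset\range(x_k)$ and $\eta_k\in\Ga$. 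The point is to combine this with Proposition \ref{norm killer}: I would like to arrange that $e_{\eta_k}^*(P_{I_k}x_k) = 0$ while $e_{\eta_k}^*(P_{I_k}Tx_k)$ stays bounded away from $0$, which would contradict boundedness of $T$.

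The main obstacle is that Proposition \ref{vfg estimates} gives estimates $|e_{\eta_k}^*(P_{I_k}x_k)|$ that are \emph{large}, not zero, so Proposition \ref{norm killer} does not apply directly to $(x_k)_k$. The fix, which is the technical heart of the argument, is to replace $(x_k)$ by an auxiliary RIS $(y_k)$ built so that the relevant functionals vanish on $y_k$ but not on $Ty_k$ --- this is where $\varphi_T = 0$ enters. One natural route: use the hypothesis $\varphi_T = 0$ together with Proposition \ref{first diagonal proximity}(ii) to replace $T$ by a compact perturbation annihilating the diagonal (i.e.\ $d_\ga^*(Td_\ga)\to 0$), then run a construction in the spirit of the proof of Proposition \ref{norm killer}: produce a skipped-block RIS whose $\eta_k$-coordinates and $I_k$'s are chosen so that $e_{\eta_k}^*(P_{I_k}x_k) = 0$ (using that one can shrink $I_k$ away from $\range(x_k)$, or use a skipped-block structure) while the corresponding coordinate of $Tx_k$ remains bounded below, then invoke Proposition \ref{norm killer} to conclude $T$ is unbounded --- a contradiction. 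Thus every RIS is sent to a norm-null sequence, and $T$ is compact by Corollary \ref{ris compact char}. Combining the two implications gives the statement.
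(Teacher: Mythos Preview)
Your overall architecture is right---the easy direction is fine, and you correctly identify that the converse must go through Corollary \ref{ris compact char} and Proposition \ref{norm killer}. But there is a genuine gap in how you set up the application of Proposition \ref{vfg estimates}, and the vague ``fix'' you describe at the end does not close it.

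Specifically, you apply Proposition \ref{vfg estimates} to the RIS $(x_k)_k$ itself, obtaining $I_k\subset\range(x_k)$ and $\eta_k$ with $|e_{\eta_k}^*(P_{I_k}x_k)|$ bounded below. This is backwards: for Proposition \ref{norm killer} you need $|e_{\eta_k}^*(P_{I_k}Tx_k)|$ bounded below and $e_{\eta_k}^*(P_{I_k}x_k)=0$, and your choice of $(\eta_k,I_k)$ gives no control whatsoever over $e_{\eta_k}^*(P_{I_k}Tx_k)$. The paper instead applies Proposition \ref{vfg estimates} to the block sequence $(Tx_k)_k$ (which has $\liminf_k\|Tx_k\|>0$), yielding $I_k\subset\range(Tx_k)$ and $\eta_k$ with $|e_{\eta_k}^*(P_{I_k}Tx_k)|\geq\e$ and $\weight(I_k,\eta_k)\to 0$.

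The second missing idea is the concrete construction of the auxiliary RIS and precisely how $\varphi_T=0$ enters. One sets
\[
y_k = x_k - e_{\eta_k}^*(P_{I_k}x_k)\,d_{\eta_k},
\]
which (after a subsequence, and after reducing to the case $\rank(\eta_k)\in I_k$ by a short interval-splitting argument) is still a RIS and satisfies $e_{\eta_k}^*(P_{I_k}y_k)=0$. The hypothesis $\varphi_T=0$ is used via Proposition \ref{first diagonal proximity}(ii) to deduce $\|Td_{\eta_k}\|\to 0$; this is what guarantees $|e_{\eta_k}^*(P_{I_k}Ty_k)|\to|e_{\eta_k}^*(P_{I_k}Tx_k)|\geq\e$, so that Proposition \ref{norm killer} applies to $(y_k)_k$ and gives the contradiction. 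Your proposal gestures toward ``replacing $T$ by a compact perturbation annihilating the diagonal'' and ``shrinking $I_k$ away from $\range(x_k)$,'' but neither of these is the actual mechanism, and without the subtraction-of-$d_{\eta_k}$ trick together with $\|Td_{\eta_k}\|\to 0$ the argument does not close.
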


\begin{proof}
If $T$ is compact, then it sends weakly null sequences to norm-null ones. Because $(d_\ga)_{\ga\in\Ga}$ is shrinking, it follows that $\varphi_T = 0$.

Assume now that $T$ is a bounded linear operator with $\varphi_T = 0$, i.e., $\lim_{\rank(\ga)\to\infty}d_\ga^*(Td_\ga) = 0$. Proposition \ref{first diagonal proximity} (ii) yields
\begin{equation}
\label{compact zero diagonal proof eq1}
\lim_{\rank(\ga)\to\infty}\big\|Td_\ga\big\| = 0.
\end{equation}
Towards contradiction, assume that $T$ is not compact. Then, there exists a RIS $(x_k)_k$ with $\limsup_k\|Tx_k\| > 0$. As usual, assume that $(Tx_k)_k$ is a block sequence for. By Proposition \ref{vfg estimates} one may pass to a further subsequence and find $\e > 0$ such that, for each $k\in\N$, there exist an interval $I_k$ of $\range(Tx_k)$ and $\eta_k\in\Ga$ such that $|e_{\eta_k}^*(P_{I_k}Tx_k)| \geq \e$ and $\lim_k\weight(I_k,\eta_k) = 0$.

Assume for the moment that $\rank(\eta_k)\in I_k$, for all $k\in\N$. Although this might not be true, later the general case will be reduced to this one. For each $k\in\N$, consider the sequence $y_k = x_k - e_{\eta_k}^*(P_{I_k}x_k)d_{\eta_k}$. By passing to a subsequence, this is a RIS and because it was assumed that $\rank(\eta_k)\in I_k$, for all $k\in\N$, it follows that
\[e_{\eta_k}^*\big(P_{I_k}y_k\big) = e_{\eta_k}^*\big(P_{I_k}x_k\big) - e_{\\eta_k}^*\big(P_{I_k}x_k\big)e_{\eta_k}^*\big(P_{I_k}d_{\eta_k}\big) = 0.\]
By \eqref{compact zero diagonal proof eq1}, $\liminf|e_{\eta_k}^*(P_{I_k}Ty_k)| = \liminf|e_{\eta_k}^*(P_{I_k}Tx_k)| \geq \e$, By Proposition \ref{norm killer}, $T$ is unbounded.

To treat the remaining case, assume that $\rank(\eta_k)\notin I_k$, for all $k\in\N$. This in particular implies that $\rank(\eta_k)>\max(I_k)$ because otherwise $e_{\eta_k}^*\circ P_{I_k} = 0$. In particular, $\weight(I_k,\eta_k) = \weight(\eta_k)$, for all $k\in\N$. Let $I_k'$ be the smallest interval of $\N$ containing $I_k$ and $\rank(\eta_k)$. Write $I_k' = I_k\cup I_k''$, where $I_k'' = \{\max(I_k) +1,\ldots,\rank(\eta_k)\}$. Then, $e_{\eta_k}^*\circ P_{I_k} = e_{\eta_k}^*\circ P_{I'_k} - e_{\eta_k}^*\circ P_{I_k''}$. Therefore,
\[\text{either }\big|e_{\eta_k}^*\big(P_{I'_k}x_k\big)\big| \geq\e/2\text{ or }\big|e_{\eta_k}^*\big(P_{I'_k}x_k\big)\big| \geq\e/2.\]
Let $J_k = I_k'$ or $J_k = I_k''$ accordingly. In either case, $\rank(\eta_k)\in J_k$ and $\weight(J_k,\eta_k) \leq \weight(\eta_k) = \weight(I_k,\eta_k)\to 0$. Note that it might no longer be true that $J_k\subset\range(x_k)$, but this was not required in the proof of the special case above.
\end{proof}

\section{Additional properties of $\X$}
\label{extra stuff}
This section outlines some additional properties of $\X$ without giving detailed proofs. For example, the complemented subspaces of $\X$ are classified and it is mentioned that $\X$ does not contain unconditional basic sequences.

\subsection{Complemented subspaces of $\X$}
Denote by $\mathrm{Clopen}(K)$ the collection of clopen subsets of $K$. The wealth of projections on $\X$ is directly linked to this collection. For each $F\in\mathrm{Clopen}(K)$, the characteristic $\chi_F:K\to\C$ is Lipschitz and therefore the diagonal operator $\hat\chi_K:\X\to\X$ is a bounded 0-1 valued diagonal operator, in particular it is the canonical projection onto the subspace $W_F = \overline{\langle\{d_\ga:\kappa(\ga)\in F\}\rangle}$. Observe that, because $F$ is open, whenever $F\neq\emptyset$ then $W_F$ is infinite dimensional. This is because the sequence $(K_n)_n$ is increasing with dense union and for each $n\in\N$, $\{\kappa(\ga):\ga\in\De_n\} = K_n$. For each $K\in\mathrm{Clopen}(K)$, denote $P_F = \hat\chi_K$.

\begin{prop}
\label{projections in X_K}
The map $F\mapsto P_F$ is an injection from $\mathrm{Clopen}(K)$ into the collection of bounded canonical basis projections such that $P_F$ is of infinite rank if and only if $F\neq\emptyset$. Furthermore the following hold.
\begin{enumerate}[leftmargin=23pt,label=(\roman*)]

\item\label{P_F+compact} For every projection $P:\X\to\X$ there exists a unique $F\in\mathrm{Clopen}(K)$ such that $P - P_F$ is compact.


\item\label{Z_F mod finite} For every complemented subspace $W$ of $\X$ there exist $F\in\mathrm{Clopen}(K)$ and $n\in\N\cup\{0\}$ such that either $W\simeq W_F\oplus \C^n$ or $W\simeq W_F/\C^n$.

\item\label{Z_F diag+comp} For each $F\in\mathrm{Clopen}(K)$, $W_F$ has the diagonal-plus-compact property.

\end{enumerate}
\end{prop}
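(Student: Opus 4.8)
The plan is to derive everything from two facts already in hand: $\Phi:\mathpzc{Cal}(\X)\to C(K)$ is a Banach algebra isomorphism (Corollary \ref{cor main thm}), and every bounded operator on $\X$ is diagonal-plus-compact (Corollary \ref{diag+comp}). For the preliminary assertions, note first that each $\chi_F$ ($F$ clopen) is Lipschitz, so $P_F=\widehat{\chi_F}$ is a bounded $0$--$1$ valued diagonal operator, i.e. a canonical basis projection; $P_\emptyset=0$, while for $F\neq\emptyset$ the remark preceding the statement records that $W_F$ is infinite dimensional, so $P_F$ has infinite rank. Injectivity of $F\mapsto P_F$ is seen by taking $F\neq F'$, say with $\kappa_0\in F\setminus F'$, and using that $F\setminus F'$ is a nonempty open set met by the dense set $\bigcup_nK_n$: there is $\ga\in\Ga$ with $\kappa(\ga)\in F\setminus F'$, whence $P_Fd_\ga=d_\ga\neq0=P_{F'}d_\ga$. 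For \ref{P_F+compact}: given a projection $P\in\mathcal{L}(\X)$, the function $\varphi_P=\Phi([P])$ is idempotent in $C(K)$ since $\Phi$ is multiplicative and $P^2=P$, hence $\varphi_P=\chi_F$ for a unique clopen $F$; then $\Phi([P_F])=\varphi_{P_F}=\chi_F=\varphi_P=\Phi([P])$ and injectivity of $\Phi$ give $[P]=[P_F]$, i.e. $P-P_F$ is compact, and the uniqueness of $F$ follows in the same way.

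For \ref{Z_F mod finite}, fix a projection $P$ with range $W$ and take $F$ as in \ref{P_F+compact}, so $P-P_F$ is compact. Set $S=P|_{W_F}:W_F\to W$ and $R=P_F|_W:W\to W_F$, which are well defined since $P$ has range $W$ and $P_F$ has range $W_F$. Using $P_FP=P_F+P_F(P-P_F)$, $PP_F=P_F+(P-P_F)P_F$, and that $P_F$ and $P$ restrict to the identity on $W_F$ and $W$ respectively, a direct computation shows that $RS$ and $SR$ are each of the form identity plus a compact operator. By Atkinson's theorem $S$ is therefore Fredholm as an operator from $W_F$ to $W$: its kernel is finite dimensional, of dimension $k$ say, and its range is closed of some finite codimension $m$ in $W$. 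Picking a closed complement $V$ of $\ker S$ in $W_F$, the map $S|_V:V\to S(W_F)$ is a Banach space isomorphism, so $W\simeq V\oplus\C^m$ and $W_F\simeq V\oplus\C^k$. If $m\geq k$ this gives $W\simeq W_F\oplus\C^{m-k}$; if $m<k$ it gives $W_F\simeq W\oplus\C^{k-m}$, i.e. $W\simeq W_F/\C^{k-m}$, which is the claimed dichotomy.

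For \ref{Z_F diag+comp}, let $S\in\mathcal{L}(W_F)$ and extend it to $\widetilde S\in\mathcal{L}(\X)$ by $\widetilde Sx=S(P_Fx)$ (bounded since $P_F$ is). By Corollary \ref{diag+comp}, $\widetilde S=D+A$ with $D$ a bounded diagonal operator on $\X$ and $A$ compact. Since $D$ is diagonal with respect to $(d_\ga)_{\ga\in\Ga}$ and $W_F=\overline{\langle d_\ga:\kappa(\ga)\in F\rangle}$ is a coordinate subspace, $D$ maps $W_F$ into itself and $D|_{W_F}$ is diagonal with respect to $(d_\ga)_{\kappa(\ga)\in F}$; restricting $\widetilde S=D+A$ to $W_F$ and using $P_F|_{W_F}=\mathrm{id}$ yields $S=D|_{W_F}+A|_{W_F}$, a diagonal-plus-compact decomposition in $\mathcal{L}(W_F)$. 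The one genuinely technical point is the Fredholm argument in \ref{Z_F mod finite}: checking carefully that $S=P|_{W_F}$ has pseudo-inverses modulo compact operators on both sides, and then unwinding the finite-dimensional bookkeeping so that exactly the stated dichotomy emerges.
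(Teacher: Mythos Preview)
Your proposal is correct. For (i) and (iii) your arguments coincide with the paper's essentially verbatim: idempotents in $C(K)$ are characteristic functions of clopen sets, and the extension-by-$P_F$ trick is exactly what the paper has in mind. For (ii) you take a genuinely different route. The paper works more concretely: it picks $n$ large enough that the compact part $A=P-P_F$ satisfies $\|AP_{[n,\infty)}\|<\e$, and then shows directly that $Z=P(P_FP_{[n,\infty)}\X)$ is complemented in $\X$, isomorphic to the tail space $P_FP_{[n,\infty)}\X$ (which has finite codimension in $W_F$), and of finite codimension in $W$; the dichotomy then follows from the same finite-dimensional bookkeeping you carry out at the end. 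Your Atkinson-based argument is cleaner and more conceptual --- it bypasses the explicit norm estimates and the ``classical functional analysis gymnastics'' the paper alludes to --- while the paper's approach has the minor advantage of exhibiting an explicit common complemented subspace with a concrete complementation constant.
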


\begin{proof}[Sketch of proof]
For \ref{P_F+compact}, note that $\phi = \Psi(P)$ is an idempotent in $C(K)$ and therefore the characteristic of a clopen set $F$. By Theorem \ref{compact zero diagonal}, $P - P_F$ is compact. The uniqueness comes from the fact that for $F\neq G$, $P_F-P_G$ is non-compact.

For \ref{Z_F mod finite}, let $P$ be a projection onto $W$ and write $P = P_F + A$, with $A$ compact. By Proposition \ref{lip diag bounded}, there exists $N\in\N$ such that for all $n\geq N$, $\|P_FP_{[n,\infty)}\|\leq 4$. For $0<\e<1$ and $n$ sufficiently large, $\|AP_{[n,\infty)}\| \leq \e$. Some classical functional analysis gymnastics yield that the space $Z = P(P_FP_{[n,\infty)}\X)$ is $4(1+\e)/(1-\e)$-complemented in $\X$. Also, $Z\simeq P_FP_{[n,\infty)}\X$ and $\dim(W/Z) < n$.

Item \ref{Z_F diag+comp} follows from extending an operator on $W_F$ to $\X$ and using Corollary \ref{diag+comp}. Note that in particular the Calkin algebra of $W_F$ is $C(F)$.
\end{proof}

The next statement discusses the decomposability of $\X$.
\begin{prop}~
\begin{enumerate}[leftmargin=20pt,label=(\roman*)]

\item\label{connected-decomposable} The space $\X$ is indecomposable if and only if $K$ is connected.

\item\label{complementably decomposable} If $K$ is totally disconnected, then every infinite dimensional complemented subspace of $\X$ is decomposable.

\end{enumerate}
\end{prop}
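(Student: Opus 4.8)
The plan is to extract both statements from Proposition~\ref{projections in X_K}, which matches the projection theory of $\X$ with the Boolean algebra $\mathrm{Clopen}(K)$, together with the fact recorded just before that proposition — that $W_F$ is infinite dimensional exactly when the clopen set $F$ is nonempty — and the topological triviality that a totally disconnected space with more than one point is disconnected. For (i), if $K$ is disconnected, choose a clopen $F$ with $\emptyset\neq F\neq K$; then $\X=W_F\oplus W_{K\setminus F}$ is a decomposition into two infinite-dimensional subspaces. Conversely, if $\X=Y\oplus Z$ with both summands infinite dimensional and $P$ is the projection onto $Y$ along $Z$, then Proposition~\ref{projections in X_K} \ref{P_F+compact} gives a clopen $F$ with $P-P_F$ compact, and connectedness of $K$ forces $F\in\{\emptyset,K\}$; in the first case $P$ is compact, in the second $I-P$ is compact, and each contradicts that the corresponding range ($Y$, respectively $Z$) is infinite dimensional. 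Hence $\X$ is indecomposable.

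For (ii), let $W$ be an infinite-dimensional complemented subspace, let $P$ be a projection of $\X$ onto $W$, and let $F$ be the clopen set with $P-P_F$ compact provided by Proposition~\ref{projections in X_K} \ref{P_F+compact}; then $F\neq\emptyset$, since otherwise $P$ would be a compact projection of infinite rank. Assuming $F$ has at least two points, total disconnectedness of $K$ makes $F$ disconnected, so $F=F_1\sqcup F_2$ with $F_1,F_2$ nonempty and clopen in $F$, hence clopen in $K$ because $F$ is itself clopen in $K$; moreover $W_{F_1}$ and $W_{F_2}$ are infinite dimensional. Since $\chi_F=\chi_{F_1}+\chi_{F_2}$ and $\widehat{\,\cdot\,}$ is a homomorphism, $P_F=P_{F_1}+P_{F_2}$ with $P_{F_1}P_{F_2}=0$, and these diagonal operators commute with the FDD projections $P_{[n,\infty)}$. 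Now rerun the argument of Proposition~\ref{projections in X_K} \ref{Z_F mod finite}: for $n$ large it realizes $W$, up to a finite-dimensional summand, as an isomorphic copy of $P_FP_{[n,\infty)}\X$, and the commutation just noted splits the latter as $\big(P_{F_1}P_{[n,\infty)}\X\big)\oplus\big(P_{F_2}P_{[n,\infty)}\X\big)$, each summand being a finite-codimensional — hence still infinite-dimensional — subspace of $W_{F_i}$. Absorbing the finite-dimensional correction into one of the two halves exhibits $W$ as a direct sum of two infinite-dimensional subspaces, so $W$ is decomposable.

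The genuine obstruction is the case set aside above, namely that the clopen set $F$ attached to $W$ is a singleton $\{\kappa_0\}$: this happens precisely when $\kappa_0$ is an isolated point of $K$, and then $W_{\{\kappa_0\}}$ is a complemented subspace whose Calkin algebra is $C(\{\kappa_0\})=\C$ by Proposition~\ref{projections in X_K} \ref{Z_F diag+comp}, so $W_{\{\kappa_0\}}$ has the scalar-plus-compact property and is \emph{indecomposable}. Accordingly statement (ii) should be understood with $K$ perfect (equivalently, with no isolated points), under which hypothesis every nonempty clopen subset of $K$ has at least two points and the argument above applies verbatim; the only residual work is the routine verification, inside the cited proof of Proposition~\ref{projections in X_K} \ref{Z_F mod finite}, that the relevant finite-codimensional subspace is simultaneously complemented in $\X$ and in $W$.
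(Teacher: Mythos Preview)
Your argument for (i) is exactly the paper's: both directions go through Proposition~\ref{projections in X_K}~\ref{P_F+compact}. For (ii) the paper's sketch cites Proposition~\ref{projections in X_K}~\ref{Z_F mod finite} to reduce an arbitrary complemented $W$ to some $W_F$ (up to a finite-dimensional correction) and then splits $F=F_1\sqcup F_2$ to get $W_F=W_{F_1}\oplus W_{F_2}$; you inline the reduction rather than just citing it, but the strategy is identical.

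Your observation about isolated points is correct and in fact catches an inaccuracy in the paper. The sketch asserts that the condition ``every non-empty clopen subset of $K$ contains a further proper non-empty clopen subset'' is strictly weaker than total disconnectedness, but this implication fails: any isolated point $\kappa_0$ of a totally disconnected $K$ yields a clopen singleton $\{\kappa_0\}$ admitting no such splitting, and then $W_{\{\kappa_0\}}$ has Calkin algebra $C(\{\kappa_0\})=\C$ by Proposition~\ref{projections in X_K}~\ref{Z_F diag+comp}, so it is scalar-plus-compact and hence indecomposable. Thus statement (ii) as written is false whenever $K$ has isolated points; the hypothesis that makes both the paper's argument and yours go through is precisely the clopen-splitting condition (equivalently, that $K$ be perfect in the totally disconnected case), not total disconnectedness alone.
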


\begin{proof}[Sketch of proof]
The first statement follows directly from Proposition \ref{projections in X_K} \ref{P_F+compact}. For the second one, by Proposition \ref{projections in X_K} \ref{Z_F mod finite} it suffices to check for a subspace of the form $W_F$. Write $F$ as the disjoint union of two non-empty clopen sets $F_1$, $F_2$ and observe that $W_F = W_{F_1} \oplus W_{F_2}$. It is perhaps of some interest that this process can be combined with Proposition \ref{lip diag bounded} to construct an uncomplemented subspace of $W_F$ with an infinite dimensional Schauder decomposition. Note that all that was required is that every non-empty clopen subset of $K$ contains a further proper non-empty clopen subset. This is strictly weaker than $K$ being totally disconnected.
\end{proof}

\begin{rem}
In all cases, $\X$ has, up to isomorphism, countably many complemented subspaces. This is because $\mathrm{Clopen}(K)$ is countable and Proposition \ref{projections in X_K} \ref{Z_F mod finite}. An argument similar to that of Proposition \ref{projections in X_K} \ref{Z_F subspace isom} shows that for $F\neq G\in\mathrm{Clopen}(K)$, $W_F$ is not isomorphic to a finite codimensional subspace of $W_G$. This can be used to resolve  the isomorphic containment relation on the complemented subspaces of $\X$ based on the inclusion relation on $\mathrm{Clopen}(K)$.
\end{rem}

\subsection{The subspace structure of $\X$}
This section discusses the lack of unconditional sequences in $\X$ and whether $\X$ is HI.

\begin{prop}~
\label{unconditionality in X_K}
\begin{enumerate}[leftmargin=23pt,label=(\roman*)]

\item\label{hi saturated} The space $\X$ does not contain unconditional sequences and therefore it is HI-saturated

\item\label{hi singleton} The space $\X$ is hereditarily indecomposable if and only if $K$ is a singleton.

\item\label{Z_F subspace isom} A complemented subspace of $\X$ is not isomorphic to any of its proper subspaces.

\item\label{infinite dim SD} The space $\X$ does not admit an infinite dimensional Schauder decomposition.

\end{enumerate}
\end{prop}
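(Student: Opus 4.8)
The plan is to handle the four assertions separately, using the hereditarily-indecomposable machinery of Sections~\ref{common concepts section}--\ref{operators section} for \ref{hi saturated} and \ref{hi singleton}, and the operator-structure results of Section~\ref{impact section} together with Proposition~\ref{projections in X_K} for \ref{Z_F subspace isom} and \ref{infinite dim SD}. For \ref{hi saturated} I would run the classical Gowers--Maurey/Argyros--Haydon scheme. Suppose $Y=\overline{\langle y_i\rangle}\subseteq\X$ with $(y_i)$ a $C_u$-unconditional basic sequence. Inside $Y$ one produces a RIS, converts it via Lemma~\ref{EP builder} into $(C,\cdot,1)$-exact pairs, and assembles them into a $(C,2j_0-1,1)$-dependent sequence $(x_i)_{i=1}^{n_{2j_0-1}}$ whose terms are successively supported with respect to $(y_i)$. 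The only genuinely new point is that the constraints defining $\Ga$ now involve $\varrho$; this is handled by first passing to a subsequence so that the evaluating functionals used have $\kappa$-values converging to a single $\kappa_0\in K$ (compactness of $K$), whereupon Lemma~\ref{EP builder} applies with that $\kappa_0$. One then has $\|n_{2j_0-1}^{-1}\sum_i x_i\|\ge m_{2j_0-1}^{-1}$ (testing against the functional $\gamma$ defining the dependent sequence, as remarked before Proposition~\ref{zero dependent sequence bound}), while for a suitable choice of signs $\|n_{2j_0-1}^{-1}\sum_i\pm x_i\|\le C'm_{2j_0-1}^{-2}$ --- the alternating-sign analogue of Proposition~\ref{zero dependent sequence bound}, obtained from the tree/auxiliary-space estimates of \cite[\S8]{argyros:haydon:2011} (which carry over verbatim), using that injectivity of $\sigma$ forces any weight-$m_{2j_0-1}^{-1}$ functional to agree with the special sequence of $\gamma$ on only boundedly many terms. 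Since the $x_i$ are successively supported in $(y_i)$, $C_u$-unconditionality gives $\|n_{2j_0-1}^{-1}\sum_i\pm x_i\|\ge C_u^{-1}\|n_{2j_0-1}^{-1}\sum_i x_i\|$, which for $j_0$ large contradicts the two displayed bounds. Hence $\X$ has no unconditional basic sequence, and Gowers' dichotomy theorem then gives HI-saturation.

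For \ref{hi singleton}: if $K$ is a singleton then $\varrho\equiv 0$ and each $\mathcal B_n$ consists of constants, so the metric conditions in Instruction~\ref{main def} are vacuous, $\X$ is a standard Argyros--Haydon-type space with saturation under the very-fast-growing constraint, and its hereditary indecomposability follows as in \cite[\S8]{argyros:haydon:2011} (cf.\ \cite{argyros:motakis:2019}). Conversely, pick $\kappa_1\neq\kappa_2$ in $K$, set $\delta=\varrho(\kappa_1,\kappa_2)>0$, and fix a Lipschitz $\psi\colon K\to[0,1]$ equal to $1$ within distance $\delta/3$ of $\kappa_1$ and to $0$ within distance $\delta/3$ of $\kappa_2$; then $\hat\psi\in\mathcal L(\X)$ by Proposition~\ref{lip diag bounded}. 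Since $(K_n)_n$ is increasing with dense union, choose $\ga_1,\ga_2,\ga_3,\dots$ in $\bigcup_n\De_n^0$ with $\rank(\ga_1)<\rank(\ga_2)<\cdots$ and with $\kappa(\ga_i)$ within $\delta/3$ of $\kappa_1$ for $i$ odd and within $\delta/3$ of $\kappa_2$ for $i$ even. On $V=\overline{\langle d_{\ga_i}:i\in\N\rangle}$ one has $\hat\psi d_{\ga_i}=d_{\ga_i}$ for $i$ odd and $\hat\psi d_{\ga_i}=0$ for $i$ even, so $\hat\psi|_V$ is a bounded projection on $V$ with range $\overline{\langle d_{\ga_{2k-1}}\rangle}$ and kernel $\overline{\langle d_{\ga_{2k}}\rangle}$, both infinite-dimensional; thus $V$ is decomposable and $\X$ is not HI.

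For \ref{Z_F subspace isom}, by Proposition~\ref{projections in X_K}\ref{Z_F mod finite} it suffices, up to routine finite-dimensional bookkeeping, to show that $W_F$ --- which by Proposition~\ref{projections in X_K}\ref{Z_F diag+comp} has the diagonal-plus-compact property and Calkin algebra $C(F)$ --- is not isomorphic to a proper subspace of itself. Let $U\colon W_F\to W_F$ be an into-isomorphism and put $\phi=\varphi_U\in C(F)$, so $[U]=\phi$. If $\phi$ vanishes nowhere on $F$ it is invertible in $C(F)$; approximating it by a nowhere-vanishing Lipschitz function $\psi$ (for which $\hat\psi$ is invertible on $W_F$, with inverse $\widehat{\psi^{-1}}$) and using the norm estimate of Proposition~\ref{lip diag bounded}, a routine manipulation shows $U$ is Fredholm of index $0$, hence --- being injective --- surjective, contradicting that its range is proper. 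If instead $\phi(\kappa_0)=0$ for some $\kappa_0\in F$, choose basis vectors $d_{\ga_k}$ with $\kappa(\ga_k)\in F$, $\kappa(\ga_k)\to\kappa_0$, $\rank(\ga_k)\to\infty$; since the basis is shrinking these are weakly null, so writing $U=D+A$ with $D$ diagonal and $A$ compact we get $\|Ad_{\ga_k}\|\to0$ and $d_{\ga_k}^*(Ud_{\ga_k})\to\varphi_U(\kappa_0)=0$ by Theorem~\ref{eventual continuity}, whence $\|Ud_{\ga_k}\|\to0$ while $\|d_{\ga_k}\|\ge 1/3$; thus $U$ is not bounded below --- again a contradiction.

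Statement \ref{infinite dim SD} is the one I expect to be the main obstacle. Suppose $\X=\bigoplus_n Y_n$ is a Schauder decomposition with every $Y_n$ infinite-dimensional, with coordinate projections $R_n$, partial sums $Q_n=\sum_{k\le n}R_k$, $\sup_n\|Q_n\|<\infty$ and $Q_n\to I$ strongly. By Proposition~\ref{projections in X_K}\ref{P_F+compact}, $R_n=P_{F_n}+A_n$ with $F_n\in\mathrm{Clopen}(K)$ and $A_n$ compact; from $R_nR_m=0$ the $F_n$ are pairwise disjoint, and each is nonempty since $R_n$ has infinite rank, so $Q_n=P_{G_n}+B_n$ with $G_n=\bigsqcup_{k\le n}F_k$ clopen increasing, $B_n$ compact, and $G_n\neq K$ for all $n$. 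A compactness argument gives $\kappa_*\in K$ lying in no $F_k$. Taking a RIS $(d_{\ga_k})$ of weight-zero basis vectors with $\kappa(\ga_k)\to\kappa_*$, one has $P_{G_n}d_{\ga_k}=0$ for $k$ large, hence $Q_nd_{\ga_k}\to0$ as $k\to\infty$ for each fixed $n$; after a diagonal extraction $(d_{\ga_k})$ becomes, up to an arbitrarily small perturbation, a block sequence of the decomposition $(Y_n)_n$, still a RIS. Feeding this RIS into the exact-pair and zero-dependent-sequence construction of Section~\ref{operators section} --- the scheme of Propositions~\ref{norm killer} and \ref{eventual continuity proof} --- I would manufacture, for each $j_0$, a vector of norm $O(m_{2j_0-1}^{-2})$ together with a weight-$m_{2j_0-1}^{-1}$ functional supported on only finitely many blocks $Y_n$ that witnesses $\|Q_N\|\ge c\,m_{2j_0-1}$ for the corresponding $N$, with $c$ independent of $j_0$, contradicting $\sup_n\|Q_n\|<\infty$. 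The delicate part --- and where most of the work will lie --- is arranging this construction so that the ``large'' functional it produces is forced by the block structure of $(d_{\ga_k})$ to factor through a single partial-sum projection $Q_N$; when $K$ is a singleton the statement is of course immediate from \ref{hi singleton}, since an HI space admits no decomposition into two infinite-dimensional summands.
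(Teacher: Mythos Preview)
Your treatment of \ref{hi saturated}, \ref{hi singleton}, and \ref{Z_F subspace isom} matches the paper's sketch: build one-exact pairs and one-dependent sequences after arranging $\kappa(\eta_k)\to\kappa_0$ for \ref{hi saturated}; observe that the metric constraint trivialises when $|K|=1$ and otherwise use a Lipschitz bump to decompose a suitable subspace for \ref{hi singleton}; and run a Fredholm-index argument on $W_F$ for \ref{Z_F subspace isom}. For the last of these the paper phrases it as ``every semi-Fredholm diagonal operator on $W_F$ has index zero, hence by diagonal-plus-compact so does every semi-Fredholm operator on $W_F$,'' but your case split on whether $\varphi_U$ vanishes amounts to the same thing.

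For \ref{infinite dim SD} you are working much too hard, and the dependent-sequence plan you outline has a real gap. A block sequence with respect to a Schauder decomposition is merely basic, not unconditional, so there is no mechanism by which your small-norm vector $w$ and weight-$m_{2j_0-1}^{-1}$ functional $e_\ga^*$ force a \emph{single} $\|Q_N\|$ to be large: for a zero-dependent sequence one has $e_\ga^*(Q_Nw)=0$ for every $N$ (the $z_i$ are blocks, so $Q_Nw$ is a partial sum of the $z_i$, on each of which $e_\ga^*$ vanishes), while for a one-dependent sequence $\|w\|$ is not $O(m_{2j_0-1}^{-2})$. The ``delicate part'' you flag is not a technicality but the entire missing idea.

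The paper's route avoids the HI machinery altogether. You already have the strictly increasing chain $G_1\subsetneq G_2\subsetneq\cdots$ of proper clopen subsets of $K$ coming from Proposition~\ref{projections in X_K}\ref{P_F+compact}. The remaining point is that $Q_n\to I$ strongly forces $\bigcup_n G_n=K$. But then $(G_n)_n$ is an increasing open cover of the compact space $K$, hence $K=G_N$ for some $N$, contradicting $G_N\subsetneq G_{N+1}$. That is the whole argument---Proposition~\ref{projections in X_K}\ref{P_F+compact} plus compactness of $K$---and it is why you should expect \ref{infinite dim SD} to be the \emph{easiest} of the four items rather than the hardest.
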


\begin{proof}[Sketch of proof]
For the first two statements, it is necessary to show that every block subspace of $\X$ contains a 2-RIS. The main difference to how this is proved in \cite{argyros:haydon:2011} is here it is necessary to use Proposition \ref{vfg estimates} and then perform an argument similar to that in \cite[Lemma 8.2]{argyros:haydon:2011}.

To show that $\X$ contains no unconditional sequence, on any block subspace one builds a RIS $(x_k)_k$ that is normalized by a very fast growing sequence $(e_{\eta_k}^*\circ P_{I_k})_k$ with $\kappa(\eta_k)\to\kappa_0$ and to build a one-exact pair. The rest of the process is similar to \cite[Section 8]{argyros:haydon:2011}. By Gowers' famous dichotomy theorem from \cite{gowers:1996}, $\X$ is HI-saturated.

If $K$ is a singleton then the metric constraint trivializes; for every $\ga,\ga'\in\Ga$, $\kappa(\ga) = \kappa(\ga')$. Therefore exact pairs coming from different block subspaces can be connected to show the HI property, as in \cite[Section 8]{argyros:haydon:2011}. If $K$ is not a singleton, then take two non-empty open sets $U$, $V$ with positive distance. Using the metric, define a Lipschitz function $\phi$ with $\phi|_U = 1$ and $\phi_V = 0$. Then, $\hat \phi_U$ restricted on $\overline{\langle\{d_\ga:\kappa(\ga)\in U\cup V\}\rangle}$ is a projection onto $\overline{\langle\{d_\ga:\kappa(\ga)\in U\}\rangle}$ with kernel $\overline{\langle\{d_\ga:\kappa(\ga)\in V\}\rangle}$.

Statement \ref{Z_F subspace isom} is an application of the Fredhold index on spaces of the form $W_F$. Note that every semi-Fredhold diagonal operator on $W_F$ is Fredholm of index zero. By Proposition \ref{projections in X_K} \ref{Z_F diag+comp}, every semi-Fredholm operator on $W_F$ is Fredholm of index zero.

The final statement follows from Proposition \ref{projections in X_K} \ref{P_F+compact} and compactness. Assume that $(P_n)_n$ is a sequence of infinite dimensional projections with increasing ranges $(W_n)_n$, such that $\dim(W_{n+1}/W_n) = \infty$ for all $n\in\N$, that converges in the strong operator topology to the  identity. For each $n\in\N$ Write $P_n = P_{F_n} + A_n$. It follows that $F_1\subsetneq F_2\subsetneq\cdots$ and $\cup_n F_n = K$. Compactness prohibits this.
\end{proof}

\begin{rem}
Although it is likely that no subspace of $\X$ is isomorphic to its proper subspaces, the lack of convex combinations in the definition of $\Ga$ does not allow the usual proofs to go through. 
\end{rem}

\subsection{The space of bounded linear operators $\mathcal{L}(\X)$}
This brief section visits the space $\mathcal{L}(\X)$ with regards to its ideal and subspace structure.
\begin{prop}~
\label{L(X) prop}
\begin{enumerate}[leftmargin=23pt,label=(\roman*)]

\item\label{c0 in L(X)} The space $\mathcal{L}(\X)$ does not contain an isomorphic copy of $c_0$. In particular, unless $K$ is finite, $\mathcal{L}(\X)$ does not contain an isomorphic copy of $C(K)$ and therefore $\mathcal{K}(\X)$ is not complemented in $\mathcal{L}(\X)$.

\item\label{quotient ss} The quotient map $[\cdot]:\mathcal{L}(\X)\to\mathpzc{Cal}(\X)$ is strictly singular if an only if $K$ is countable.

\item\label{ideals} Denote by $\mathrm{Open}(K)$ the collection of open subsets of $\X$. There exists an order preserving bijection between $\mathrm{Open}(K)$ and the non-zero closed two sided ideals of $\mathcal{L}(\X)$.

\end{enumerate}
\end{prop}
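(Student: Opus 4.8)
These three assertions are essentially independent; I would prove them in the order \ref{ideals}, \ref{c0 in L(X)}, \ref{quotient ss}, using throughout the Banach algebra isomorphism $\Phi\colon\mathpzc{Cal}(\X)\to C(K)$ of Corollary~\ref{cor main thm}. For \ref{ideals} the plan is first to observe that every non-zero closed two-sided ideal $\mathcal{J}$ of $\mathcal{L}(\X)$ contains $\mathcal{K}(\X)$: picking $0\neq T\in\mathcal{J}$, then $x_0\in\X$ and $g_0\in\X^*$ with $g_0(Tx_0)=1$, every rank-one operator $z\mapsto u(z)v$ ($u\in\X^*$, $v\in\X$) factors as $(w\mapsto g_0(w)v)\circ T\circ(z\mapsto u(z)x_0)\in\mathcal{J}$, so $\mathcal{J}$ contains all finite-rank operators and hence their closure $\mathcal{K}(\X)$ (as $\X$ has a basis). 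Therefore the non-zero closed two-sided ideals of $\mathcal{L}(\X)$ are exactly the preimages under $[\,\cdot\,]$ of the closed two-sided ideals of $\mathpzc{Cal}(\X)$; through $\Phi$ these match, order-preservingly and bijectively, the closed ideals of the commutative $C^*$-algebra $C(K)$, which by Gelfand theory are the sets $I_U=\{f:f\equiv 0\text{ on }K\setminus U\}$, $U\in\mathrm{Open}(K)$, and $U\mapsto I_U$ is an order-preserving bijection. Composing yields the claimed order-preserving bijection $U\mapsto\{T\in\mathcal{L}(\X):\varphi_T\equiv 0\text{ on }K\setminus U\}$.

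For \ref{c0 in L(X)} the heart of the matter is that $\mathcal{L}(\X)$ contains no isomorphic copy of $c_0$; the two stated consequences then follow formally (when $K$ is infinite $C(K)\supseteq c_0$, so $\mathcal{L}(\X)$ cannot contain $C(K)$, and if $\mathcal{K}(\X)$ were complemented then $\mathcal{L}(\X)\cong\mathcal{K}(\X)\oplus\mathpzc{Cal}(\X)$ would contain $C(K)$). By the Bessaga--Pe\l czy\'nski criterion it is enough to prove that any weakly unconditionally Cauchy series $\sum_n T_n$ in $\mathcal{L}(\X)$ converges in operator norm. For each $x$, $\sum_n T_nx$ is weakly unconditionally Cauchy in $\X$, which is HI-saturated and hence contains no $c_0$ (Proposition~\ref{unconditionality in X_K}\ref{hi saturated}), so it converges unconditionally; similarly $\sum_n T_n^*f$ converges for $f\in\X^*\cong\ell_1$ (Proposition~\ref{shrinking basis}), $\ell_1$ having no $c_0$. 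Hence $\sum_n T_n=T$ in the strong operator topology, $\|T\|\leq M:=\sup_{G\text{ finite}}\|\sum_{n\in G}T_n\|<\infty$. Assuming norm convergence fails, group the $T_n$'s into consecutive blocks to reduce to the case $\|T_n\|\geq\e>0$ (with $M$ still finite), choose unit $x_n$ with $\|T_nx_n\|\geq\e/2$, pass to a skipped-block RIS subsequence (the basis being shrinking), normalize $(T_nx_n)$ by very fast growing functionals $e_{\eta_n}^*\circ P_{I_n}$ via Proposition~\ref{vfg estimates}, and use compactness of $K$ to cluster the $\kappa(\eta_n)$; then rerun the exact-pair/zero-dependent-sequence construction from the proof of Proposition~\ref{norm killer} with the \emph{bounded} operators $\sum_{n\in G}T_n$ playing the role of a single operator, producing for each $j_0$ a vector $w$ of small norm on which $\sum_{n\in G}T_n$ has norm $\gtrsim\e'/m_{2j_0-1}$ relative to $\|w\|\lesssim m_{2j_0-1}^{-2}$, which forces $\sup_G\|\sum_{n\in G}T_n\|=\infty$, a contradiction.

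For \ref{quotient ss}: if $K$ is countable it is scattered, so $C(K)\cong\mathpzc{Cal}(\X)$ is $c_0$-saturated; were $[\,\cdot\,]$ not strictly singular, some infinite-dimensional $W\subseteq\mathcal{L}(\X)$ would have $[\,\cdot\,]|_W$ an isomorphic embedding, whence the infinite-dimensional subspace $[W]$ of $C(K)$ would contain $c_0$, and so would $W$, contradicting \ref{c0 in L(X)}. Conversely, if $K$ is uncountable then by Milutin's theorem $C(K)\cong C[0,1]$, which contains an isomorphic copy of $\ell_1$; transporting it by $\Psi$ into $\mathpzc{Cal}(\X)$ and lifting through the metric quotient map $[\,\cdot\,]$ — possible because $\ell_1$ is projective — produces an isomorphic embedding $j\colon\ell_1\to\mathcal{L}(\X)$ with $[\,\cdot\,]\circ j$ bounded below; then $[\,\cdot\,]$ restricts to an isomorphism on the infinite-dimensional subspace $j(\ell_1)$, so it is not strictly singular.

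The only non-routine step is the norm-convergence upgrade in \ref{c0 in L(X)}. The delicate part there is the same as in the proofs of Proposition~\ref{norm killer} and Theorem~\ref{eventual continuity proof}: when the operators $\sum_{n\in G}T_n$ are applied to the exact-pair averages, the contribution of the ``cross terms'' $T_mx_n$ with $m\neq n$ must be shown not to spoil the lower estimate, which is exactly where the HI techniques together with the weight and metric constraints are needed.
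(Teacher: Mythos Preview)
Your treatments of \ref{ideals} and \ref{quotient ss} are correct and essentially coincide with what the paper indicates (the paper merely cites \cite{kania:laustsen:2017} and \cite{motakis:puglisi:zisimopoulou:2016} respectively, and your arguments are precisely the standard ones behind those references). The lifting of $\ell_1$ through $[\cdot]$ in the uncountable case is exactly what the paper sketches; invoking Milutin is harmless overkill, since $\ell_1\hookrightarrow C(K)$ for any non-scattered compact $K$.

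The problem lies in \ref{c0 in L(X)}. Your plan to ``rerun'' Proposition~\ref{norm killer} with the finite sums $S_G=\sum_{n\in G}T_n$ in place of a single operator does not go through, and the difficulty is not merely a matter of care. In the norm-killer argument one exploits that a \emph{fixed} bounded operator $T$ sends a RIS $(x_k)$ essentially into $P_{\range(x_k)}\X$ (Proposition~\ref{first diagonal proximity}); this localization is what kills all interactions between different $k$'s in the exact-pair estimate. In your setup the operator $S_G$ changes with the index set $G$, which in turn depends on the stage of the construction, so you cannot invoke that localization uniformly. Concretely, when you compute $e^*_{\eta_k}\!\big(P_{I_k}S_Gx_k\big)$ you pick up, besides the desired term $e^*_{\eta_k}(P_{I_k}T_kx_k)$, a sum $\sum_{m\in G\setminus\{k\}}e^*_{\eta_k}(P_{I_k}T_mx_k)$; each summand can be of order $\|T_m\|\cdot\|x_k\|\asymp M$, there are $|G|-1$ of them, and nothing in the HI machinery or the metric/weight constraints forces these to be small. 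Even after using the Schur property of $\X^*\cong\ell_1$ to get $\sum_n\|T_n^*f\|\le C\|f\|$ for every $f$, the resulting bound on the cross terms is only $O(1)$ per $k$, the same order as the diagonal contribution, so no contradiction emerges. The final sentence of your proposal identifies exactly the right obstruction but does not overcome it.

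The paper does not give an argument here either; it simply points to \cite[Remark~4.5]{motakis:puglisi:zisimopoulou:2016}. That argument is of a different nature: it does not re-enter the HI construction but combines Kalton's theorem that $c_0\hookrightarrow\mathcal K(X)$ forces $c_0\hookrightarrow X$ with structural facts about $\mathscr L_\infty$-spaces whose dual is $\ell_1$. If you want to repair your route, you would need a genuinely new idea to decouple the operators $T_n$ from one another inside the dependent-sequence construction; as written, the step is a gap rather than a delicacy.
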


\begin{proof}[Comment on Proof]
The proof of item \ref{c0 in L(X)} is outlined in \cite[Remark 4.5, page 65]{motakis:puglisi:zisimopoulou:2016}. The fact that for $K$ countable the quotient map is strictly singular is also explained in Remark 4.6 of that same paper. If $K$ is uncountable, then $C(K)$ contains an isomorphic copy of $\ell_1$ that can be lifted by $[\cdot]$ to $\mathcal{L}(\X)$. Item \ref{ideals} was explained in \cite[Remark 1.5 (vi), page 1022]{kania:laustsen:2017}.
\end{proof}

\subsection{Very incomparable spaces with $C(K)$ Calkin algebras} Similarly to \cite[Section 10.2, page 46]{argyros:haydon:2011}, by varying the the sequence $(m_j,n_j)_j$ is is possible to create different versions of the space $\X$ that are very incomparable to one another. Recall that, up to homeomorhism, there are continuum many compact metrizable spaces, as they may be identified with the closed subsets of $[0,1]^\N$.

\begin{prop}
\label{very incomparable}
There exists a collection of Banach spaces
\[\big\{\X^\alpha: \alpha<\mathfrak{c},\;K\in\mathrm{Closed}([0,1]^\N)\big\}\]
with the following properties.

\begin{enumerate}[label=(\roman*),leftmargin=19pt]

\item For each $\X^\alpha$, $\mathpzc{Cal}(\X^\alpha)$ is homomorphically isometric to $C(K)$.

\item For $(\alpha,K)\neq (\beta,L)$ every bounded linear operator $T:\X^\alpha\to \mathfrak{X}_{^{C\>\!\!(\>\!\!L\>\!\!)}}^{\beta}$ is compact.

\end{enumerate}
\end{prop}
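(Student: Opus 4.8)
The plan is to combine the standard technique for producing incomparable Argyros–Haydon-type spaces (as in \cite[Section 10.2]{argyros:haydon:2011} and the earlier work on scalar-plus-compact spaces) with the $C(K)$-identification of the Calkin algebra already established in this paper. First I would fix, for each ordinal $\alpha<\mathfrak c$, a double sequence $(m_j^\alpha,n_j^\alpha)_j$ of positive even integers satisfying the four growth conditions (a)--(d) of the present section, chosen so that the sequences are pairwise ``very separated'': for $\alpha\neq\beta$ the sets of weights $\{m_j^\alpha:j\in\N\}$ and $\{m_j^\beta:j\in\N\}$ become eventually disjoint, and more precisely each sufficiently large $m_j^\alpha$ lies in a lacunary gap far from every $m_i^\beta$. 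Since $\mathfrak c$ is the cardinality of an almost disjoint family of infinite subsets of $\N$ with strong lacunarity properties, such a family of parameter sequences exists. For each $(\alpha,K)$ with $K\in\mathrm{Closed}([0,1]^\N)$, carry out the construction of Section \ref{BD section} verbatim with these parameters, producing the space $\mathfrak X^{\alpha}_{^{C(K)}}$ (abbreviated $\X^\alpha$); then part (i) is immediate from Theorem \ref{main theorem} (equivalently Corollary \ref{cor main thm} and Proposition \ref{renorming}), since the proofs there used only the four growth conditions on $(m_j,n_j)_j$, never their specific values.

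The substance is part (ii). Fix $(\alpha,K)\neq(\beta,L)$ and a bounded linear operator $T:\X^\alpha\to\mathfrak X^{\beta}_{^{C(L)}}$; I want to show $T$ is compact, and by Corollary \ref{ris compact char} (applied to the domain side, whose basis is shrinking by Proposition \ref{shrinking basis}) it suffices to prove $\lim_k\|Tx_k\|=0$ for every RIS $(x_k)_k$ in $\X^\alpha$. Suppose not; after passing to a subsequence we may assume $(Tx_k)_k$ is a bounded block sequence in $\mathfrak X^{\beta}_{^{C(L)}}$ with $\liminf_k\|Tx_k\|>0$. Apply Proposition \ref{vfg estimates} in the \emph{target} space: after a further subsequence there are intervals $I_k$ of $\range(Tx_k)$ and $\eta_k\in\Ga^\beta$ with $\liminf_k|e_{\eta_k}^*(P_{I_k}Tx_k)|>0$ and $\weight^\beta(I_k,\eta_k)\to0$. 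The weights $\weight^\beta(\eta_k)=(m_{j_k}^\beta)^{-1}$ are therefore among the $\beta$-weights. Now one runs the ``blow-up'' machine of Proposition \ref{norm killer} / Proposition \ref{first diagonal proximity}: using Lemma \ref{EP builder} in the domain $\X^\alpha$ one manufactures, for each $j$, a $(16C,2j,0)$-exact pair $(z,\zeta)$ (with $\zeta\in\Ga^\alpha$, $\weight^\alpha(\zeta)=(m_{2j}^\alpha)^{-1}$) built from a skipped block RIS extracted from $(x_k)_k$, with $e_\zeta^*(Tz)=|e_\zeta^*(Tz)|$ bounded below; iterating and using the coding function $\sigma$ on the $\alpha$ side one assembles a $(16C,2j_0,0)$-dependent sequence and a $w\in\X^\alpha$ with $\|w\|\le 480C(m_{2j_0-1}^\alpha)^{-2}$ while $\|Tw\|\ge\e\, (m_{2j_0-1}^\beta)^{-1}$. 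Letting $j_0\to\infty$ forces $\|T\|\ge \tfrac{\e}{480C}\cdot\dfrac{m_{2j_0-1}^\alpha}{m_{2j_0-1}^\beta}$ or, depending on which space carries the amplification, the ratio the other way; either way the lacunary separation of the two parameter sequences makes one of these ratios unbounded, a contradiction.

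I should be slightly more careful about \emph{which} space does the amplifying, and this is where the main obstacle lies. The estimate on $\|w\|$ in the domain uses the $\alpha$-parameters (via Proposition \ref{zero dependent sequence bound} in $\X^\alpha$), whereas the lower bound on $\|Tw\|$ detected through $e_\zeta^*$ or $e_\ga^*$ with $\ga\in\Ga^\beta$ in the target uses the $\beta$-parameters, so the inequality produced is genuinely of the form $\|T\|\gtrsim m^{\alpha}_{2j_0-1}/m^{\beta}_{2j_0-1}$ times a constant. For this to blow up I need the parameter sequences arranged so that $m^\alpha_{j}/m^\beta_{j}\to\infty$ \emph{or} $\to 0$ along a common subsequence of indices realized by the exact-pair construction; but a single pair $(\alpha,\beta)$ must be handled symmetrically (the operator could go either way), so the parameter family must be chosen with a genuine asymmetry, e.g.\ totally ordered by eventual domination of $(m_j)_j$, and then both $T:\X^\alpha\to\mathfrak X^\beta_{^{C(L)}}$ and $T:\mathfrak X^\beta_{^{C(L)}}\to\X^\alpha$ get killed because in one direction the amplification constant blows up and in the other the norm of the manufactured vector shrinks faster than the detected functional value. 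The delicate bookkeeping — matching which even index $2j_0$ appears, which weight $(m^\cdot_{2j_0-1})^{-1}$ the final coding $\ga$ carries in which space, and verifying that the exact-pair and dependent-sequence constructions of Lemma \ref{EP builder} and Proposition \ref{zero dependent sequence bound} survive the cross-space setting (they do, since those proofs only use the growth axioms and Proposition \ref{gamma builder}, which is internal to each space) — is exactly the part I expect to consume most of the write-up; the case $\alpha=\beta$, $K\neq L$ is easier and reduces, via Theorem \ref{compact zero diagonal}, to noting that a non-compact $T$ would give a nonzero $\varphi_T\in C(L)$ obstructed by comparing the $K$- and $L$-structure of the diagonal, or more simply is subsumed by choosing distinct parameters for distinct $K$ as well so that only the genuinely-incomparable case occurs.
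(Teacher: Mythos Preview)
Your high-level setup is close to the paper's: one does indeed vary the parameters and rely on an almost disjoint family of size $\mathfrak c$. But the paper does it differently and more simply: it fixes a \emph{single} master sequence $(m_j,n_j)_j$ and chooses an almost disjoint family $\{L^\alpha_K:\alpha<\mathfrak c,\ K\in\mathrm{Closed}([0,1]^\N)\}$ of infinite subsets of $\N$; the space $\X^\alpha$ is then built using only the weights $(m_j,n_j)_{j\in L^\alpha_K}$. This also absorbs the case $\alpha=\beta$, $K\neq L$ with no separate argument.

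The substantive gap in your sketch is the cross-space ``norm killer''. You propose to apply Lemma~\ref{EP builder} in the domain $\X^\alpha$ to produce an exact pair $(z,\zeta)$ with $\zeta\in\Ga^\alpha$, and then speak of $e_\zeta^*(Tz)$. But $Tz\in\mathfrak X^\beta_{^{C(L)}}$ while $e_\zeta^*$ is a functional on $\ell_\infty(\Ga^\alpha)$; these live over different index sets and the evaluation is undefined. More fundamentally, Lemma~\ref{EP builder} (through Proposition~\ref{gamma builder}) assembles $\zeta$ from the nodes $\eta_k$, and your $\eta_k$ come from Proposition~\ref{vfg estimates} applied in the \emph{target}, so $\eta_k\in\Ga^\beta$; you cannot feed those into the $\Ga^\alpha$-builder. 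Conversely, if you build the detecting $\ga$ in $\Ga^\beta$ (which is what is needed to lower-bound $\|Tw\|$), then the odd-weight coding in $\De^{\text{\ref{age nonzero},odd}}$ of $\mathfrak X^\beta_{^{C(L)}}$ forces all intermediate $\zeta_i$ to lie in $\Ga^\beta$, and $(z_i,\zeta_i)$ ceases to be an exact pair in either space. The dependent-sequence machinery of Propositions~\ref{norm killer} and~\ref{zero dependent sequence bound} is intrinsically an endomorphism argument and does not transfer between two distinct Bourgain--Delbaen spaces. (Your fallback of a family totally ordered by eventual domination is also problematic: a chain of length $\mathfrak c$ under $\le^*$ need not exist in ZFC.)

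The argument the paper points to, following \cite[Theorem~10.4]{argyros:haydon:2011}, avoids odd weights and coding entirely. With a common master sequence and almost disjoint index sets one picks $2j\in L^\beta_L\setminus L^\alpha_K$, uses Proposition~\ref{vfg estimates} and Proposition~\ref{gamma builder} \emph{in the target} to build an even-weight $\ga\in\Ga^\beta$ with $\weight(\ga)=m_{2j}^{-1}$ detecting $Tz$ for $z=(m_{2j}/n_{2j})\sum x_k$, and then bounds $\|z\|_{\X^\alpha}$ directly via the basic inequality: since $2j\notin L^\alpha_K$, the weight $m_{2j}^{-1}$ is absent from $\X^\alpha$, and the RIS average of length $n_{2j}$ is far smaller there than $m_{2j}^{-1}$. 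No ratio of two separate parameter sequences appears; the contradiction comes from a single index $2j$ that is ``present'' in the target and ``missing'' in the domain.
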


\begin{proof}[Comment on Proof]
This is achieved by choosing an almost disjoint family of infinite subsets of the natural numbers $\{L^\alpha_K:\alpha<\mathfrak{c},\;K\in\mathrm{Closed}([0,1]^\N)\}$. Then for each such $L^\alpha_K$ define a space $\X^\alpha$ with Calkin algebra $C(K)$ using $(m_j,n_j)_{j\in L^\alpha_K}$. The proof that for each $(\alpha,K)\neq (\beta,L)$ every bounded linear operator $T:\X^\alpha\to \mathfrak{X}_{^{C\>\!\!(\>\!\!L\>\!\!)}}^{\beta}$ is compact is very similar to \cite[Theorem 10.4, page 47]{argyros:haydon:2011}, with the assistance of Proposition \ref{vfg estimates}.
\end{proof}

\begin{cor}
\label{matrix-calkin}
Let $k_1,\ldots,k_n\in\N$ and $K_1,\ldots,K_n$ be compact metric spaces. There exists a Banach space $X$ with $\mathpzc{Cal}(X)$ isomorphic as a Banach algebra to $M_{k_1}(C(K_1))\oplus\cdots\oplus M_{k_n}(C(K_n))$.
\end{cor}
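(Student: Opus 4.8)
The plan is to combine Proposition \ref{very incomparable} with the elementary fact that passing to a $k$-fold direct sum replaces a Calkin algebra $\mathcal{A}$ by $M_k(\mathcal{A})$. None of this requires reopening the Bourgain--Delbaen machinery.

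First I would record two routine facts. Let $Z$ be any Banach space and let $Z^{\oplus k} = Z\oplus\cdots\oplus Z$ ($k$ summands, with, say, the $\ell_2$-norm), with canonical inclusions $\iota_j:Z\to Z^{\oplus k}$ and projections $P_i:Z^{\oplus k}\to Z$. Then $T\mapsto (P_iT\iota_j)_{i,j=1}^k$ is a Banach-algebra isomorphism $\mathcal{L}(Z^{\oplus k})\to M_k(\mathcal{L}(Z))$ (multiplicativity uses $\sum_l\iota_lP_l = I$), and since $T = \sum_{i,j}\iota_i(P_iT\iota_j)P_j$, an operator $T$ is compact if and only if each entry $P_iT\iota_j$ is; thus $\mathcal{K}(Z^{\oplus k})$ corresponds to $M_k(\mathcal{K}(Z))$. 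Using the standard identification $M_k(A)/M_k(I)\cong M_k(A/I)$ for a closed ideal $I$ of a Banach algebra $A$ (apply the quotient map entrywise), one obtains
\[
\mathpzc{Cal}(Z^{\oplus k}) \;\cong\; M_k(\mathcal{L}(Z))/M_k(\mathcal{K}(Z)) \;\cong\; M_k\big(\mathpzc{Cal}(Z)\big)
\]
as Banach algebras. In particular, if $\mathpzc{Cal}(Z)$ is Banach-algebra isomorphic to $C(L)$ then $\mathpzc{Cal}(Z^{\oplus k})\cong M_k(C(L))$.

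Next I would invoke Proposition \ref{very incomparable}: after identifying each $K_i$ with a closed subset of $[0,1]^{\mathbb N}$ and fixing $n$ distinct indices $\al_1,\ldots,\al_n<\mathfrak{c}$, that proposition supplies Banach spaces $X_1,\ldots,X_n$ (where $X_i$ is the member of the family attached to the pair $(\al_i,K_i)$) such that $\mathpzc{Cal}(X_i)$ is Banach-algebra isomorphic to $C(K_i)$ and, because the pairs $(\al_i,K_i)$ are pairwise distinct, every bounded operator $X_i\to X_j$ with $i\neq j$ is compact. Set $Y_i = X_i^{\oplus k_i}$, so $\mathpzc{Cal}(Y_i)\cong M_{k_i}(C(K_i))$ by the previous step, and let $X = Y_1\oplus\cdots\oplus Y_n$.

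It remains to compute $\mathpzc{Cal}(X)$. Writing $T\in\mathcal{L}(X)$ in block form $T = (T_{ij})$ with $T_{ij}\in\mathcal{L}(Y_j,Y_i)$, observe that for $i\neq j$ the block $T_{ij}$ is a $k_i\times k_j$ matrix of bounded operators $X_j\to X_i$, each of which is compact; hence $\mathcal{L}(Y_j,Y_i) = \mathcal{K}(Y_j,Y_i)$ whenever $i\neq j$. Consequently the map $T\mapsto(T_{11},\ldots,T_{nn})$ descends modulo $\mathcal{K}(X)$ to a topological isomorphism $\mathpzc{Cal}(X)\to\bigoplus_{i=1}^n\mathpzc{Cal}(Y_i)$ — the off-diagonal blocks lie in $\mathcal{K}(X)$ and so contribute nothing to the quotient, and surjectivity is clear from block-diagonal operators — and it is multiplicative since in a product $(ST)_{ii} = S_{ii}T_{ii} + \sum_{j\neq i}S_{ij}T_{ji}$ with every summand $S_{ij}T_{ji}$ ($j\neq i$) compact. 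Chaining the isomorphisms gives $\mathpzc{Cal}(X)\cong\bigoplus_{i=1}^n\mathpzc{Cal}(Y_i)\cong\bigoplus_{i=1}^n M_{k_i}(C(K_i))$ as Banach algebras, as desired. The only genuine inputs are Theorem \ref{main theorem} and Proposition \ref{very incomparable}; the rest is bookkeeping with block-operator matrices, and the one point to watch throughout is that each identification respects the algebra structure, not merely the linear one.
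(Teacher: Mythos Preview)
Your proof is correct and follows precisely the route the paper indicates: the paper's ``Comment on Proof'' only says that the Kania--Laustsen argument (for finite-dimensional semi-simple algebras) goes through verbatim once Proposition \ref{very incomparable} supplies the required totally incomparable building blocks, and what you have written is exactly that argument spelled out in full detail. The one thing worth noting is that you handled the potential repetition $K_i = K_j$ correctly by using distinct indices $\alpha_i$, which is the reason the continuum-sized family in Proposition \ref{very incomparable} is indexed by both $\alpha$ and $K$.
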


\begin{proof}[Comment on Proof]
It was observed by Kania and Laustsen in \cite[Note added in proof, page 1022]{kania:laustsen:2017} that every finite dimensional semi-simple complex algebra is a Calkin algebra. Using Proposition \ref{very incomparable}, their argument works perfectly well to find Calkin algebras of the above form.
\end{proof}

\section{Open problems}
\label{open problems}
With regards to the question of what unital algebras can be realized as Calkin algebras of a Banach space (see, e.g., Tarbard's PhD thesis \cite[page 134]{tarbard:2013} in 2012), there is a lot of progress to be made. For example, there does not exist a known property of unital Banach algebras that precludes them from being Calkin algebras (some progress was made in \cite{horvath:kania:2021} which is discussed further below). The appearance of the Argyros-Haydon construction led to the description of a plethora of explicit Calkin algebras, both finite and infinite dimensional. Unlike the Calkin algebras of classical spaces, these examples are always separable and (in essence) commutative. Additionally, all infinite dimensional ones are non-reflexive.

From a Banach-space-theoretic perspective, the construction of a reflexive Calkin algebra seems particularly intriguing and challenging. In \cite{motakis:puglisi:tolias:2020} a quasireflexive $\mathpzc{Cal}(X)$ was found. Although this may seem very close, the underlying space $X$ has an infinite dimensional Schauder decomposition. This must be avoided in order to achieve reflexivity of the Calkin algebra (compare this to Proposition \ref{unconditionality in X_K} \ref{infinite dim SD}).

\begin{problem}
Does there exist a reflexive and infinite dimensional Calkin algebra?
\end{problem}
It is worth mentioning that, if viewed simply as Banach spaces, there exists a large variety of Calkin algebras. In \cite{motakis:puglisi:tolias:2020} the author, Puglisi, and Tolias proved that there exist HI Calkin algebras and that every non-reflexive space with an unconditional basis is a Calkin algebra, albeit with a non-standard multiplication.

Constructing non-separable explicit Calkin algebras would require the development of additional tools. Of particular relevance is the question of the existence of a space with an unconditional basis that has the diagonal-plus-compact property. Such a space's Calkin algebra would automatically be isomorphic to $C(\beta\N\!\setminus\!\N)$. Another path worth exploring is the existence of some type of non-separable Argyros-Haydon space (e.g., based on the Bourgain-Pisier construction from \cite{bourgain:pisier:1983} and its non-separable version by Lopez-Abad from \cite{lopez-abad:2013}). It is worth pointing out that there already exist known examples of non-separable $C(K)$ algebras with representations of the form $\mathcal{L}(X)/\mathcal{SS}(X)$ (Koszmider, \cite{koszmider:2004} and Plebanek, \cite{plebanek:2004}). It unclear however how the methods from these two papers could be used to study the following.

\begin{problem}
Does there exist a non-separable $C(K)$ space that is a Calkin algebra?
\end{problem}

Horv\'ath and Kania proved in \cite{horvath:kania:2021} that for any cardinal $\la$ there exists a $C(K)$ space of density $2^\la$ that is not the Calkin algebra of any space of density $\la$. Of course, this does not mean that $C(K)$ is not the Calkin algebra of a space with larger density.

The current paper yields that every separable and commutative $C^*$-algebra can be represented as a Calkin algebra. Outside this class, there still remain classical commutative Banach algebras for which the existence of such a representation is unknown, e.g., the convolution algebra $L_1(G)$ for an abelian locally compact polish group $G$. Note that Tarbard's Calkin algebra from \cite{tarbard:2013} is not of this type, because it is $\ell_1(\N_0)$, a semigroup algebra. This space is closely related to the disk algebra, another example of interest. In a personal communication with the author, J. Pachl asked the following.

\begin{problem}
What semigroup algebras admit representations as Calkin algebras?
\end{problem}

There exist non-commutative explicit Calkin algebras, such as all finite dimensional semi-simple complex algebras (as observed by Laustsen and Kania in \cite[Note added in proof, page 1022]{kania:laustsen:2017}) and algebras of the form $M_{k_1}(C(K_1))\oplus\cdots\oplus M_{k_n}(C(K_n))$ (see Corollary \ref{matrix-calkin}). However, these examples are built by applying elementary processes to commutative ones.

There are additional challenges associated to describing explicit (and ``genuinely'') non-commutative Calkin algebras. In \cite{gowers:maurey:1997} Gower and Maurey gave an example of a quotient algebra of some $\mathcal{L}(X)$ that resembles the Cuntz algebra $\mathcal{O}_n$. However, in that construction it is unclear what the kernel of corresponding homomorphism is.

The direction of focusing on the description of explicit non-commutative $C^*$-algebras as Calkin algebras was proposed to the author in a personal communication by N. C. Phillips, who specifically asked the following.
\begin{problem}
Do the following non-commutative $C^*$-algebras admit representations as Calkin algebras?
\begin{enumerate}[label=(\alph*)]

\item\label{UHF} The UHF algebra of type $2^\infty$.

\item\label{Cuntz} The Cuntz algebra $\mathcal{O}_n$.

\item\label{Reduced} The reduced $C^*$-algebra of the free group on two generators, $C_r^*(\mathbb{F}_2)$.

\item\label{Full} The full $C^*$-algebra of the free group on two generators, $C^*(\mathbb{F}_2)$.

\end{enumerate}
\end{problem}

Let $\mathcal{A}$ denote a specific unital $C^*$-algebra, e.g., one of the above. The first step towards representing it as a Calkin algebra is to identify the right class of operators $\mathcal{C}$ acting on a separable Hilbert space that generates $\mathcal{A}$. The next logical step it to represent $\mathcal{A}$ as a quotient algebra of an $\mathcal{L}(X)$ space \`a-la Gowers-Maurey. This is achieved by creating a space $X$ where a class modeled on $\mathcal{C}$ reigns supreme in $\mathcal{L}(X)$, in the sense that it can be used to approximate all operators, modulo perhaps some small ideal (e.g., the strictly singular or compact operator ideal). The first hurdle in achieving this task is that the classical Gowers-Maurey HI space, being based on Schlumprecht space, resembles $\ell_1$ and not a Hilbert space. This is the precise reason why in the Gowers-Maurey shift space $X$ from \cite{gowers:maurey:1997}, $\mathcal{L}(X)$ has $\ell_1(\Z)$ as a quotient algebra instead of $C(\mathbb{T})$. The explanation is the following. Considers the class $\mathcal{C}$ of all integer powers of the right shift operator on $\Z$. Acting on $\ell_1(\Z)$ this class generates the convolution  algebra $\ell_1(\Z)$ whereas acting on $\ell_2(\Z)$ it generates $C(\mathbb{T})$.

A reasonable approach would be to first focus on Banach algebras of operators on $\ell_1$ that are similar to \ref{UHF}, \ref{Cuntz}, \ref{Reduced}, \ref{Full} e.g.,
\begin{enumerate}

\item spatial $L_p$ UHF algebras (Phillips, \cite{phillips:2013}),

\item $L_p$-Cuntz algebras $\mathcal{O}_n^p$ (Phillips, \cite{phillips:2012}),

\item reduced group $L_p$-operator algebras $F_\la^p(G)$ (Herz, \cite{herz:1973}), and

\item full group $L_p$-operator algebras $F^p(G)$ (Phillips, \cite{phillips:2013:crossed}).

\end{enumerate}
After some progress has been made in the case $p=1$, there is some available technology that may be used to generalize, namely the asymptotic-$\ell_p$ HI spaces of Deliyanni and Manoussakis from \cite{deliyanni:manoussakis:2007}. It is not entirely clear how one would then proceed to the next step, i.e., representing $\mathcal{A}$ as a Calkin algebra, but making it this far would most certainly provide a lot of insight. Calkin algebras not based on $\ell_1$ were achieved by the author, Puglisi, and Tolias in \cite{motakis:puglisi:tolias:2020} by combining techniques of Argyros, Deliyanni, and Tolias from \cite{argyros:deliyanni:tolias:2011} and Zisimopoulou from \cite{zisimopoulou:2014}.

$K$-theory of operator spaces on Banach spaces has been studied since the 1990s when Gowers and Maurey used it in \cite{gowers:maurey:1997} to prove the existence of a Banach space isomorphic to its cube, but not its square. This highlighted the connections between $K$-theory, Fredholm theory, and quotients of operator algebras in general Banach spaces. Since then, the $K$-theory of $\mathcal{L}(X)$ has been computed for various $X$  and examples of spaces with interesting $K$-theories have been constructed (see, e.g., \cite{laustsen:1999}, \cite{laustsen:2001}, \cite{zsak:2002}, and \cite{kania:koszmider:laustsen:2015}). The Gowers-Maurey and Argyros-Haydon spaces have been an important component of this endeavour. Phillips asked the author the following question, which has also been attributed to Laustsen (see, e.g., \cite[page 748]{zsak:2002}).

\begin{problem}
\label{laustsen problem}
Which pairs of abelian groups $(G_0,G_1)$ can arise as $K$-groups $(K_0(\mathcal{L}(X)),K_1(\mathcal{L}(X)))$ for some Banach space $X$?
\end{problem}

It is known among K-theory experts that for every pair of countable abelian groups $(G_0,G_1)$, there exists a compact metric space $X$ such that $(K_0(X),K_1(X)) = (\Z\oplus G_0,G_1)$. Despite the author's best effort, a reference for this general statement could not be found. An outline of a construction of such $X$ for finitely generated abelian groups can be found in, e.g., \cite[Exercise 13.2, page 228]{rordam:larsen:laustsen:2000}. The author and Phillips showed in \cite{motakis:phillips:2023}  that for every compact metric space $X$, if $E$ is the Banach space given by Theorem \ref{main theorem} with Calkin algebra $C(X)$, then $(K_0(\mathcal{L}(E)),K_1(\mathcal{L}(E))) = (\Z\oplus K_0(X),K_1(X))$.
Therefore in Problem \ref{laustsen problem}, for any pair of countable abelian groups $(G_0,G_1)$, the pair $(\Z\oplus\Z\oplus G_0,G_1)$ is realizable.
 A related question is the following.
 
\begin{problem}
\label{ref problem}
Which pairs of abelian groups $(G_0,G_1)$ can arise as $K$-groups $(K_0(\mathpzc{Cal}(X)),K_1(\mathpzc{Cal}(X)))$ for some Banach space $X$?
\end{problem}


\section*{Acknowledgements} The author would like to thank the anonymous referee for helpful suggestions and, in particular, for recommending the inclusion of Problem \ref{ref problem} and a discussion on $K$-theory in Section \ref{open problems}.

%
%
\bibliographystyle{plain}
\bibliography{bibliography}

\end{document}